\documentclass[a4paper,american,reqno]{amsart}

\newif\ifSubmission \Submissiontrue

\usepackage{babel}
\usepackage[utf8]{inputenc}
\usepackage[T1]{fontenc}
\usepackage[binary-units=true]{siunitx}
\usepackage{booktabs}
\usepackage{xspace}
\usepackage{accents}
\usepackage{algorithm}
\usepackage[noend]{algorithmic}
\usepackage{todonotes}
\usepackage{csquotes}
\usepackage{enumitem}
\setlist[enumerate,1]{label=(\roman*), left=-0.5em}
\usepackage{amssymb,amsmath,amsfonts,amsthm}
\allowdisplaybreaks
\usepackage{microtype}
\usepackage{pgfplots}
\usetikzlibrary{pgfplots.groupplots}
\usepackage[style = authoryear-comp,
            maxbibnames = 100,
            maxcitenames = 2,
            giveninits = true,
            uniquename = init,
            isbn = false,
            dashed = false,
            backend = bibtex]{biblatex}
\usepackage[colorlinks,
            citecolor=blue,
            urlcolor=blue,
            linkcolor=blue]{hyperref} 

\makeatletter
\patchcmd{\@settitle}{\uppercasenonmath\@title}{\scshape\large}{}{}
\patchcmd{\@setauthors}{\MakeUppercase}{\scshape\normalsize}{}{}
\makeatother

\vfuzz \hfuzz
\makeatletter
\@namedef{subjclassname@2020}{%
  \textup{2020} Mathematics Subject Classification}
\makeatother

\newcommand{\field}{\mathbb}

\newcommand{\reals}{\field{R}}
\newcommand{\rationals}{\field{Q}}

\newcommand{\naturals}{\field{N}}

\newcommand{\R}{\reals}
\newcommand{\Q}{\rationals}

\newcommand{\N}{\naturals}

\newcommand{\st}{\text{s.t.}}

\newcommand{\defset}[3][\defsep]{\set{#2#1#3}}
\newcommand{\Defset}[3][\defsep]{\Set{#2#1#3}}
\newcommand{\set}[1]{\{#1\}}
\newcommand{\Set}[1]{\left\{#1\right\}}

\newcommand{\Abs}[1]{\left\lvert#1\right\rvert}

\DeclareMathOperator*{\argmin}{arg\,min}
\DeclareMathOperator*{\argmax}{arg\,max}

\newcommand{\abbr}[1][abbrev]{#1.\xspace}

\newcommand{\eg}{\abbr[e.g]}

\newcommand{\ie}{\abbr[i.e]}

\newcommand{\Wlog}{\abbr[w.l.o.g]}

\newcommand{\define}{\mathrel{{\mathop:}{=}}}
\newcommand{\enifed}{\mathrel{{=}{\mathop:}}}

\newtheorem{theorem}{Theorem}

\newtheorem{proposition}{Proposition}
\newtheorem*{proposition*}{Proposition}

\newtheorem{lemma}{Lemma}

\newcommand{\scenarios}{S}

\newcommand{\initCost}{f}
\newcommand{\cost}{c}
\newcommand{\costFun}[2]{\cost_{#1}(#2)}
\newcommand{\nomCostFun}[2]{\bar{\cost}_{#1}(#2)}
\newcommand{\compCostFun}[3]{\tilde{\cost}^{\,#3}_{#1}(#2)}
\newcommand{\devCostFun}[2]{\Delta \cost_{#1}(#2)}
\newcommand{\scenarioCostFun}[3]{\cost^{#3}_{#1}(#2)}
\newcommand{\nomCost}{\bar{\cost}}
\newcommand{\devCost}{\Delta \cost}
\newcommand{\scenarioCost}[1]{\cost^{#1}}
\newcommand{\uncertaintySet}{\mathcal{U}}
\newcommand{\newUncertaintySet}{\mathcal{V}}
\newcommand{\negSet}[1]{N(#1)}
\newcommand{\posSet}[1]{P(#1)}
\newcommand{\leSet}[1]{L(#1)}
\newcommand{\geqSet}[1]{G(#1)}

\definecolor{my-red}{HTML}{d7191c}
\definecolor{my-blue}{HTML}{2b83ba}
\definecolor{my-yellow}{HTML}{fdae61}
\definecolor{my-green}{HTML}{abdda4}
\definecolor{my-purple}{HTML}{af8dc3}

\def\cross{\textcolor{my-red}{\textsf{X}}}
\renewcommand{\check}{\textcolor{my-green!70!black}{\checkmark}}

\bibliography{k-delete-recoverable-robust}

\begin{document}

\title[Exact Methods for $k$-Delete Recoverable Robust Problems]{Exact
  Methods for Solving $k$-Delete Recoverable\\Robust $0$-$1$ Problems Under
  Budgeted Uncertainty}

\author[Y. Beck, C. Büsing, I. Ljubi\'c]{Yasmine Beck, Christina Büsing, Ivana
  Ljubi\'c}

\address[Y. Beck]{%
  Eindhoven University of Technology,
  Department of Industrial Engineering and Innovation Sciences,
  PO Box 513,
  5600 MB Eindhoven,
  the Netherlands}%
\email{y.beck@tue.nl}

\address[I.\ Ljubi\'c]{%
  ESSEC Business School,
  Department of Information Systems, Data Analytics and Operations,
  3 Avenue Bernard Hirsch,
  95021 Cergy-Pontoise Cedex,
  France}%
\email{ljubic@essec.edu}

\address[C.\ Büsing]{%
  Teaching and Research Area Combinatorial Optimization,
  RWTH Aachen University,
  Ahornstra\ss e 55,
  52074 Aachen,
  Germany}%
\email{buesing@combi.rwth-aachen.de}

\date{\today}

\begin{abstract}
  We study the~$k$-delete recoverable robust $0$--$1$ problem in which a
decision-maker solves a combinatorial optimization problem subject to objective
uncertainty. The model follows a two-stage robust setup. The
decision-maker first commits to an initial plan and may then revoke up to~$k$
components of this decision after the uncertainty is revealed.
The underlying uncertainty is modeled using a budgeted uncertainty set so
that the decision-maker only hedges against a limited number of deviations in
the uncertain parameters.
We present four reformulations of the~$k$-delete recoverable
robust problem, which can be tackled using (i)~general-purpose mixed-integer
linear programming solvers,
(ii) branch-and-cut methods, or (iii)~column-and-constraint generation
algorithms.
For each formulation, we identify suitable solution methods and prove their
correctness.
Overall, we present eight approaches to solve the~$k$-delete recoverable
robust problem, which we assess and compare in an extensive computational study
on instances of the assignment problem and the single-source capacitated
facility location problem.


\end{abstract}

\keywords{Combinatorial optimization,
Mixed-integer programming,
Robust optimization,
Recoverable robustness,
Budgeted uncertainty%
%
%
}
\subjclass[2020]{90C11, 90C27, 90C57, 90C70%
%
%
}

\maketitle

\section{Introduction}
\label{sec:introduction}

Decision-makers are frequently forced to commit to choices without being able
to fully anticipate their consequences as real-world data often contains
inherent uncertainties such as measurement errors or incomplete information.
However, even small perturbations in the data can render a decision sub-optimal
or infeasible for the problem at hand; see, e.g., the case study in
\textcite{Ben-Tal_et_al:2009} for an illustrative example. It is thus
essential to explicitly account for uncertainty when designing decision-support
tools.

One approach to deal with uncertainties in mathematical optimization is
to exploit techniques from robust optimization
\parencite{Bertsimas_et_al:2011,Ben-Tal_et_al:2009,Soyster:1973}.
Classic robust models aim for solutions that remain feasible for all possible
realizations of the uncertain parameters within a prescribed uncertainty set.
However, because feasibility must be ensured even for extreme and often
unlikely realizations of uncertainty and performance is evaluated with
respect to the worst-case outcome, such models may discard decisions that
would perform well in most realistic situations.
As a result, classic robust solutions are often overly conservative.
This drawback has motivated a growing interest in more flexible robust
approaches for dealing with uncertainties, including adjustable robustness
\parencite{Ben-Tal_et_al:2004}, light robustness
\parencite{Fischetti_Monaci:2009}, distributional robustness
\parencite{Goh-Sim:2010,Wiesemann_et_al:2014}, $\Gamma$-robustness
\parencite{Bertsimas_Sim:2003,Bertsimas_Sim:2004,Sim:2004}, and recoverable
robustness \parencite{Liebchen_et_al:2009}. The latter two are also at the
core of this paper.
In the~$\Gamma$-robust approach, also known as the budgeted uncertainty
model, the assumption that all uncertain parameters realize in a worst-case
sense is relaxed.
Instead, the decision-maker only hedges against up to~$\Gamma$
deviations in the uncertain parameters that adversely affect the outcome.
Recoverable robustness, in contrast, involves a two-stage robust framework.
In the first stage, the decision-maker takes some decisions in a here-and-now
fashion, i.e., before the uncertainty is revealed. In the second stage, a
limited number of recovery actions can then be applied to adjust the
first-stage decisions once the actual values of the uncertain parameters become
known. These recovery actions are taken in a wait-and-see manner and may depend
on the first-stage decisions.

In this paper, we introduce the~$k$-delete recoverable robust problem in
which we combine recoverable robustness with a budgeted uncertainty modeling.
To this end, we focus on combinatorial optimization problems in which the
objective function coefficients are subject to uncertainty.
We emphasize that we do not make assumptions about the specific structure of
the underlying $0$--$1$ problem. Hence, our setting covers a broad range of
well-known combinatorial optimization problems under objective uncertainty,
including the assignment problem, the facility location problem, the minimum
spanning tree problem, or the knapsack problem, all of which are of practical
interest. In particular, our modeling framework is relevant for these problems
as their underlying structure---for example, a given graph---typically remains
fixed, while only some cost coefficients are affected by uncertainty.
In our model, the available recovery actions consist of revoking, i.e.,
deleting, up to~$k$~decisions taken in the first stage.
Incorporating the possibility of such recovery actions in the planning
phase supports more flexible and informed decision-making. In particular, it
can substantially reduce costs compared to withdrawing commitments
after solving a deterministic model.
%
%
To illustrate the practical relevance of this framework, let us brief\/ly
discuss two representative applications from telecommunications and preventive
maintenance.

\subsubsection*{Telecommunications}
Consider a telecommunication company that needs to decide on the optimal
deployment of fiber-to-the-home technology for a given set of customers; see,
e.g., \textcite{Groetschel_et_al:2014} for a primer on the optimization
and integer programming aspects of optical access network planning.
The company has a number of available facilities (e.g., multiplexers or
switchers) from which fiber-optic cables must be provided to end-customers.
In the first stage, the company needs to commit to a service provision plan by
deciding on an optimal assignment between facilities and customers.
In the future, the company intends to upgrade the service to a next-generation
technology. The corresponding upgrade costs are determined by external
suppliers of the underlying equipment and are, thus, unknown at the current
stage.
Once the actual upgrade prices become available, the company can decide
which customers to upgrade.
However, only a limited number of customers may receive the upgrade, whereas
the remaining ones are left with the old technology.
Business solutions in which some of the customers are left unserved or not
upgraded to the newest technology are quite common in practice
as fully satisfying all customer demand often exceeds the company's available
budget.

\subsubsection*{Preventive Maintenance}
Preventive maintenance of machinery and equipment is essential to ensure safe,
reliable, and uninterrupted operations across various domains; see, e.g., the
surveys in \textcite{Vasili_et_al:2011,de-Jonge_Scarf:2020}.
Maintenance schedules typically need to be created well in advance, which is
why the associated costs---such as labor, spare parts, or downtime---are often
uncertain at the planning stage.
As these uncertainties realize over time, some maintenance tasks may become too
expensive to justify or less critical than initially anticipated. For instance,
it may become economically unviable to maintain a machine with a low
failure-risk or a component may be in better condition than expected, e.g.,
because it has just been repaired or replaced.
In practice, fully rescheduling all maintenance tasks may be infeasible due to
operational, technical, or regulatory constraints.
Nevertheless, a company may adjust the plan once the actual maintenance costs
are known, but only a limited number of tasks may be postponed or canceled.

\subsection*{Related Literature}

Since its introduction in the context of train scheduling by
\textcite{Liebchen_et_al:2009}, recoverable robustness has gained increasing
attention over the last two decades. Early contributions such as
\textcite{Cicerone_et_al:2009,Buesing:2009,Cacchiani_et_al:2008} already
highlight its relevance for public transportation applications and propose
first algorithmic approaches for solving recoverable robust optimization
problems.
In the following years, recoverable robustness has been studied for various
combinatorial optimization problems and uncertainty models. In particular,
column generation methods for problems with discrete scenario sets have been
proposed in \textcite{van-den-Akker_et_al:2016,Bouman_et_al:2011}, with
applications to the size robust knapsack problem and the demand robust shortest
path problem.
Further applications of recoverable robustness include facility location and
allocation problems \parencite{Alvarez-Miranda_et_al:2015b} as well as
telecommunication network design \parencite{Alvarez-Miranda_et_al:2015}.
Moreover, knapsack problems have emerged as one of the most frequently studied
settings for recoverable robustness; see, e.g.,
\textcite{Buesing_et_al:2011a,Buesing_et_al:2011b,Buesing_et_al:2019}.
In \textcite{Buesing_et_al:2011a}, the authors consider uncertainties in both the
profits and the weights and allow recovery actions that include deleting up
to~$k$ items from and adding up to~$\ell$ items to the first-stage decision.
The uncertainties are modeled using a discrete scenario set. A similar setting
is studied in \textcite{Buesing_et_al:2011b,Buesing_et_al:2019}, 
where uncertain weights are handled using a budgeted uncertainty model and
the deletion of up to~$k$ items is allowed as recovery action.
A recovery structure similar to that in \textcite{Buesing_et_al:2011a} is also
considered in \textcite{Hommelsheim_et_al:2023}, where the authors study
recoverable robust combinatorial optimization problems for problem classes
satisfying the hereditary property.
More recently, complexity aspects of recoverable robust problems under discrete
budgeted uncertainty have been addressed in \textcite{Gruene_Wulf:2024} for
various underlying combinatorial optimization problems.

In this paper, we pursue similar ideas as in
\textcite{Buesing_et_al:2011b,Buesing_et_al:2019} so that the considered
recovery actions include deleting up to~$k$ components of the first-stage
decision.
Our contributions complement the aforementioned literature in that we do not
make assumptions about the structure of the underlying combinatorial
optimization problem in our recoverable robust setup. Hence, our
approaches can be applied to any $0$--$1$ problem subject to objective
uncertainty. Moreover, whereas much of the existing literature focuses on
structural properties and complexity results of specific problem classes, our
aim is to present different algorithmic approaches to solve~$k$-delete
recoverable robust combinatorial problems to global optimality.
To this end, we propose four equivalent reformulations of the problem, which
can be tackled using (i)~general-purpose mixed-integer linear programming
solvers, (ii)~branch-and-cut methods, or (iii)~column-and-constraint generation
algorithms.
Overall, we present eight approaches to solve the~$k$-delete recoverable robust
problem and assess and compare their performance in an extensive computational
study.

\subsection*{Outline}

The remainder of this paper is organized as follows.
In Section~\ref{sec:problem-statement}, we present the overall problem
statement and introduce the~$k$-delete recoverable robust problem.
In Section~\ref{sec:reformulations}, we derive four equivalent reformulations
of the problem: a scenario-based, a projection-based, an extended, and a
compact reformulation. For each formulation, we discuss exact solution
approaches in Section~\ref{sec:solution-approaches}.
In Section~\ref{sec:computational-results}, we conduct an extensive
computational study on instances of the assignment problem and the
single-source capacitated facility location problem to assess the
performance of the proposed solution methods. Finally, we derive conclusions in
Section~\ref{sec:conclusion}.


\section{Problem Statement and Connections to Other Problem Classes}
\label{sec:problem-statement}

We study the~$k$-delete recoverable robust $0$--$1$ problem
under objective uncertainty.
In this setting, a decision-maker determines an initial
plan~\mbox{$x \in X \subseteq \set{0,1}^n$} before the uncertainty is revealed,
where the vector~$x$ represents the first-stage decision.
Throughout this paper, we assume that~$X \neq \emptyset$ holds. Moreover,
let~$\initCost \in \R^n_{\geq 0}$ denote the fixed costs associated with the
first-stage decision.
Then, after the first-stage decision has been made, a
scenario~$s \in \scenarios$ from a compact set~$\scenarios$ of possible
uncertainty realizations is revealed, which determines the corresponding
objective function coefficients (or scenario costs)
$\scenarioCost{s} \in \R^n_{\geq 0}$ for recovery actions.
The decision-maker may then apply recovery actions and revoke up
to~$k \in \set{0,\ldots,n}$ components of the first-stage decision.
The aim is to select a first-stage decision that minimizes the total
cost, consisting of the initial cost of the plan and the
worst-case cost incurred for recovery actions.
For given~$x \in X$ and~$s \in \scenarios$, the total cost of recovery
is defined as
\begin{equation}
  \label{eq:recourse-problem}
  R(x,s) \define \min_{y} \Defset{\sum_{i=1}^n c^s_iy_i}{y \leq x,\,
    \sum_{i=1}^n y_i \geq \sum_{i=1}^n x_i - k,\, y \in \Set{0,1}^n}.
\end{equation}
Overall, the~$k$-delete recoverable robust problem can thus be stated as
\begin{equation}
  \label{eq:recoverable-robust-problem}
  \min_x \quad \initCost^\top x + \max_{s \in \scenarios} \Set{R(x,s)}
  \quad \st \quad x \in X \subseteq \Set{0,1}^n.
\end{equation}
Note that if~$\Abs{\scenarios} = 1$, the~$k$-delete recoverable
robust problem~\eqref{eq:recoverable-robust-problem} reduces to a deterministic
two-stage problem.
The recovery actions~$y$ correspond to the second-stage decisions and allow the
decision-maker to adjust the initial plan by deleting at most~$k$ components
of~$x$, where the parameter~$k$ is used to control the degree of
flexibility. Setting~$k = 0$ yields a classic robust problem without recovery
actions, whereas~$k = n$ allows all first-stage decisions to be revised.
In what follows, we use the abbreviation~$N \define \set{1,\ldots,n}$ so that
Problem~\eqref{eq:recourse-problem} can equivalently be stated as
\begin{equation*}
  R(x,s) = \sum_{i=1}^n c^s_ix_i - \max_{\Defset{K \subseteq N}{\Abs{K} \leq
      k}} \sum_{i \in K} c^s_ix_i
\end{equation*}
for given~$x \in X$ and~$s \in \scenarios$.

We point out that, in Problem~\eqref{eq:recoverable-robust-problem}, all
decisions can be recovered in principle.
This is done for the ease of presentation but we acknowledge that
more general formulations in which different parts of the decision are
treated independently in terms of uncertainties and allowed recovery actions
are possible as well.
Such generalizations can, \eg, be obtained by partitioning the index set~$N$
into recoverable and non-recoverable decisions.
For instance, this is suitable in the context of facility location
problems. An opened facility may not be closed again, \ie,
no recovery action is allowed, but the connection to an assigned customer may
be revoked.
Furthermore, cost uncertainties for the facility locations, if present at all,
can behave quite differently than uncertainties regarding the assignment
costs.

We further mention that our modeling of the~$k$-delete recoverable robust
problem allows recovered solutions to violate some constraints of the
original model. For instance, some customers may remain unserved in a facility
location problem, which can be a common situation in practice.
However, if the underlying problem satisfies the downward monotonicity (or
hereditary) property, feasibility after recovery is guaranteed.
Examples of such problems include the knapsack problem, the maximum clique
problem, and other subset-selection problems in which any subset of a feasible
solution remains feasible.


Let us now elaborate on the scenario set and the scenario costs considered in
this paper.
In robust optimization, commonly studied scenario sets include interval-,
discrete-, ellipsoidal-, or $\Gamma$-scenario sets. In particular,
the~$\Gamma$-scenario approach
\parencite{Bertsimas_Sim:2003,Bertsimas_Sim:2004,Sim:2004} is among the most
prominent and frequently used methods to address uncertainties in robust
optimization. Hence, and as it seems unlikely that all uncertain costs realize
in a worst-case sense, we also adopt such a budgeted uncertainty modeling in
this paper.
To this end, we assume that the recovery costs are
uncertain but known to take values in the uncertainty set
\begin{equation*}
  \mathcal{U} = \mathcal{U}_1 \times \dotsb \times \mathcal{U}_n
  \quad \text{with} \quad
  \mathcal{U}_i \define [\nomCost_i, \nomCost_i + \devCost_i]
  \quad \text{for all } i \in N.
\end{equation*}
Here, $\nomCost_i \geq 0$ is the nominal value of the cost of
recovery action~$i \in N$ and~$\devCost_i \geq 0 $ is the maximum deviation
from the nominal value, i.e., the maximum cost increase.
Following the~$\Gamma$-robust approach, the decision-maker now only
hedges against a subset of at
most~$\Gamma \in \set{0, \ldots, n}$ deviations in the cost coefficients that
adversely affect the solution to the problem at hand.
Here, the parameter~$\Gamma$ is used to control the
decision-maker's level of conservatism.
In particular, our modeling captures the nominal ($\Gamma = 0$) and the
strictly robust formulation ($\Gamma = n$) as special cases.
For a given~$\Gamma$, we define
\begin{equation*}
  \uncertaintySet_\Gamma\
  \define \Defset{u \in \Set{0,1}^n}{\sum_{i=1}^n u_i \leq \Gamma}
  = \Set{u^1, u^2, \ldots, u^M}
\end{equation*}
so that the discrete scenario set under budgeted uncertainty can be stated as
\begin{equation*}
  \scenarios = \Defset{s \in \Set{1,2,\ldots, M}}{
    \scenarioCost{s}_i \define \nomCost_i + \devCost_i u^s_i,\, i \in N,\,
    u^s \in \uncertaintySet_\Gamma}.
\end{equation*}
The number of scenarios~$\Abs{S} = M$ corresponds to the number of binary
vectors of length~$n$ for which at most~$\Gamma$ many entries are ones. Hence,
we have~$M \in O(n^\Gamma)$.
\subsection{Connections to Decision-Dependent Robust and Bilevel Optimization}

In Problem~\eqref{eq:recoverable-robust-problem}, a decision-maker may delete
up to~$k$ first-stage decisions after the uncertainty is revealed. Naturally,
such deletions are only relevant for decisions that are indeed taken in the
first stage and for which the cost increases. Consequently, the recovery
actions are inherently dependent on the first-stage decision. This establishes
a close relationship between~$k$-delete recoverable robustness and robust
optimization with decision-dependent uncertainty sets; see, e.g.,
\textcite{Nohadani_Sharma:2018,Poss:2013,Poss:2014,Arslan_Poss:2024} for
influential works in this area.
In particular, under certain assumptions, the~$k$-delete recoverable robust
problem~\eqref{eq:recoverable-robust-problem} can be explicitly formulated as a
decision-dependent robust optimization problem, as we illustrate in the
following.

\begin{proposition}
  \label{prop:relation-to-ddrop}
  For given~$\Gamma,k \in \set{0,\ldots,n}$, suppose that
  $$\max_{x \in X} \Set{ \sum_{i=1}^n x_i } \enifed \bar n \leq \Gamma + k$$
  holds.
  Then, the~$k$-delete recoverable robust
  problem~\eqref{eq:recoverable-robust-problem} can be solved as the
  robust combinatorial optimization problem
  \begin{equation}
    \label{eq:ddrop}
    \min_{x \in X} \Set{ \sum_{i=1}^n \initCost_ix_i
      + \max_{u \in \mathcal{U}(x)} \Set{\sum_{i=1}^n (\nomCost_i + \devCost_i)u_i}}
  \end{equation}
  with the discrete decision-dependent uncertainty set
  \begin{equation*}
    \mathcal{U}(x) = \Defset{u \in \set{0,1}^n}{x \geq u,\, \sum_{i=1}^n u_i
      \leq \sum_{i=1}^n x_i - k}.
  \end{equation*}
\end{proposition}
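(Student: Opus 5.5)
The plan is to prove the statement pointwise in $x$. I would fix an arbitrary $x \in X$, write $P \define \defset{i \in N}{x_i = 1}$ and $m \define \Abs{P} \le \bar n \le \Gamma + k$, and try to show
\[
  \max_{s \in \scenarios} R(x,s) \;=\; \max_{u \in \mathcal{U}(x)} \sum_{i=1}^n (\nomCost_i + \devCost_i) u_i .
\]
Adding $\initCost^\top x$ and minimizing over $X$ would then give~\eqref{eq:ddrop}. If $m \le k$, all of $P$ can be deleted, so $R(x,s) = 0$ for every $s$. Also $\mathcal{U}(x) = \set{0}$, so both sides vanish. Hence I assume $m > k$.

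\textbf{Closed forms of both sides.} All costs are nonnegative, so in the closed form of $R(x,s)$ after~\eqref{eq:recourse-problem} the recovery deletes the $k$ most expensive selected items. Thus $R(x,s) = \min \defset{\sum_{i \in T} c^s_i}{T \subseteq P,\ \Abs{T} = m-k}$, the sum of the $m-k$ cheapest selected scenario costs. On the right-hand side, $u \le x$ and $\nomCost + \devCost \ge 0$ mean the maximum is attained at the $m-k$ items of $P$ with the largest values $\nomCost_i + \devCost_i$. Call this set $T^*$.

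\textbf{Upper bound.} For every scenario, $c^s_i \le \nomCost_i + \devCost_i$. Hence, for any $T$ with $\Abs{T} = m-k$,
\[
  R(x,s) \le \sum_{i \in T} c^s_i \le \sum_{i \in T} (\nomCost_i + \devCost_i) \le \sum_{i \in T^*} (\nomCost_i + \devCost_i).
\]
This direction is routine.

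\textbf{Lower bound: the main obstacle, and apparently false as stated.} The natural construction uses the hypothesis $m - k \le \Gamma$: let the adversary deviate exactly on $T^*$, which is a feasible scenario. The difficulty is the $k$ items in $P \setminus T^*$. They keep their nominal costs $\nomCost_j$, and these may be smaller than costs inside $T^*$. The recovery then keeps those cheap items instead of $T^*$, and $R(x,s)$ falls below the right-hand side.

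A small check makes this concrete. Take $X = \set{(1,1)}$, $k = \Gamma = 1$ (so $\bar n = 2 = \Gamma + k$), and
\[
  (\nomCost_1, \devCost_1) = (0, 10), \qquad (\nomCost_2, \devCost_2) = (1, 0).
\]
Then $\max_s R(x,s) = 1$, attained by deviating on item $1$ and deleting it. The right-hand side of~\eqref{eq:ddrop} equals $10$. So equality of objective values fails, and for $\Abs{X} = 1$ the two problems give the same minimizer but different optimal values. Before writing the proof I would therefore check the intended reading of the statement.

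One option is a modified uncertainty set on which equality does hold, taking the value $\max_{D \subseteq P,\, \Abs{D} \le \Gamma} \min_{T \subseteq P,\, \Abs{T} = m-k} \sum_{i \in T} c^{D}_i$. Here $c^{D}$ denotes the scenario costs with $u^s = \mathbb{1}_D$, that is, deviations on exactly the items of $D$. The other option is an additional assumption, for example deviations that dominate the nominal costs of the non-deviating items.

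I would then redo the lower-bound step under the corrected statement. The adversary now chooses $D$ to maximize the sum of the $m-k$ cheapest resulting costs, and the hypothesis $m - k \le \Gamma$ is exactly what allows $D$ to cover any $(m-k)$-subset of $P$.
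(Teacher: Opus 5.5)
Your refutation is correct. The statement is false as written, and the defect lies in the paper's proof, not in your approach. Your numbers check out: the scenarios $u \in \{0, e_1, e_2\}$ (unit vectors) give $R(x,s) = 0, 1, 0$, so $\max_{s \in \scenarios} R(x,s) = 1$, whereas the inner maximum in~\eqref{eq:ddrop} is $\nomCost_1 + \devCost_1 = 10$.

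The paper's argument fixes the optimal recovery $y^*$ at a worst case. It then claims there is a worst case $u^*$ with $u^*_i = 0$ whenever $y^*_i = 0$, because setting $u^*_i = 1$ ``does not increase the objective value''. That is true only for fixed $y^*$. The recovery re-optimizes, so removing a deviation from a deleted item can make that item worth keeping. In your instance the unique worst case is $u = e_1$ with recovery $y^* = e_2$, and resetting $u_1 = 0$ lowers $R$ from $1$ to $0$. The adversary spends its budget on items that end up deleted, the opposite of $u^* = y^*$. The converse half of the paper's proof uses $y^* = u^*$ as a feasible recovery. That only yields $\max_{s} R(x,s) \le \max_{u \in \mathcal{U}(x)} \sum_i (\nomCost_i + \devCost_i) u_i$, which is your upper bound.

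With $\Abs{X} = 1$ your example only separates the optimal values. Extending it makes the optimizers differ too, which refutes the proposition under any reading of ``can be solved as''. Add two items with $(\nomCost_3,\devCost_3) = (\nomCost_4,\devCost_4) = (5,0)$, and take $X = \{(1,1,0,0),(0,0,1,1)\}$, $\initCost = 0$, $k = \Gamma = 1$, so that $\bar n = 2 = \Gamma + k$. The recoverable robust values of the two solutions are $1$ and $5$. Their values in~\eqref{eq:ddrop} are $10$ and $5$. So the unique optimizers of the two problems are different.

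On your repairs, the first option merely rewrites $\max_s R(x,s)$, since deviations outside $P$ are irrelevant. It is not a decision-dependent set of the form in~\eqref{eq:ddrop}.

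The second option cannot work. Choose $T$ in your upper-bound chain as the $m-k$ items of $P$ with the smallest values $\nomCost_i + \devCost_i$. This shows that $\max_s R(x,s)$ never exceeds the sum of the $m-k$ smallest such values. The inner maximum in~\eqref{eq:ddrop}, by contrast, is the sum of the $m-k$ largest. For $0 < m-k < m$, equality therefore forces all $\nomCost_i + \devCost_i$, $i \in P$, to coincide. So no domination-type assumption on the deviations restores equality for generic data.

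What survives is the case $\Gamma \ge \bar n$. Deviating on all of $P$ is then a worst case, and \eqref{eq:recoverable-robust-problem} becomes $\min_{x \in X} \{\initCost^\top x + \min\{\sum_i (\nomCost_i + \devCost_i) y_i : y \le x,\ \sum_i y_i \ge \sum_i x_i - k,\ y \in \{0,1\}^n\}\}$. This has an inner minimum where~\eqref{eq:ddrop} has a maximum, and it agrees with~\eqref{eq:ddrop} when $k = 0$.
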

\begin{proof}
  The~$k$-delete recoverable robust problem under budgeted uncertainty can
  equivalently be stated as
  \begin{equation*}
    \min_{x \in X} \Set{\initCost^\top x + \max_{u \in \uncertaintySet_\Gamma}
    \Set{ \min_{y \in \set{0,1}^n} \Defset{\sum_{i=1}^n \left( \nomCost_i +
        \devCost_iu_i \right) y_i}{y \leq x,\,
      \sum_{i=1}^n y_i \geq \sum_{i=1}^n x_i - k}}}.
  \end{equation*}
  Let~$x^*$ be an optimal first-stage decision to this problem and let~$y^*$ be
  its associated optimal recovery action under a worst-case uncertainty
  realization. Because all cost coefficients are non-negative, it is
  optimal to delete as many components as possible, i.e.,
  $\sum_{i=1}^n y^*_i = \sum_{i=1}^n x^*_i - k$ holds.
  We now show that there exists a worst-case uncertainty
  realization~$u^* \in \uncertaintySet_\Gamma$ with~$u^* = y^*$.
  On the one hand, if~$y^*_i = 0$ holds for some~$i \in N$, setting~$u^*_i = 1$
  does not increase the objective value. Hence, there must exist a worst-case
  realization~$u^*$ with~$u^*_i = 0$ for all~$i \in N$ satisfying~$y^*_i = 0$.
  On the other hand, if~$y^* = 1$ holds for some~$i \in N$,
  setting~$u^*_i = 1$ leads to an objective value that is at least as large as
  in the case with~$u^*_i = 0$ because~$\devCost_i \geq 0$ holds.
  Hence, we set~$u^*_i = 1$ for all~$i \in N$ with~$y^*_i = 1$.
  By construction, this yields~$u^* \in \set{0,1}^n$ and
  \begin{equation*}
    \sum_{i=1}^n u^*_i = \sum_{i=1}^n y^*_i = \sum_{i=1}^n x^*_i - k \leq \bar
    n - k \leq \Gamma,
  \end{equation*}
  i.e., $u^* \in \uncertaintySet_\Gamma$.
  In particular, each~$i \in N$ is associated with either no recovery
  cost (if~$u^*_i = 0$) or the full recovery cost~$\nomCost_i + \devCost_i$
  (if~$u^*_i = 1$).
  Defining the decision-dependent uncertainty set accordingly, we conclude
  that (i)~$x^*$ is feasible for Problem~\eqref{eq:ddrop},
  (ii)~$u^* \in \mathcal{U}(x^*)$ is a worst-case uncertainty realization
  for~$x^*$, and (iii)~the~$k$-delete recoverable robust problem and
  Problem~\eqref{eq:ddrop} attain the same objective value.

  Let now~$x^* \in X$ be an optimal solution to Problem~\eqref{eq:ddrop} and
  let~$u^* \in \mathcal{U}(x^*)$ be its associated worst-case uncertainty
  realization.
  Because~$u^*$ is optimal for the inner maximization problem
  in~\eqref{eq:ddrop}, we have~$\sum_{i=1}^n u^*_i = \sum_{i=1}^n x^*_i - k$
  and~$x^* \geq u^*$.
  Setting~$y^* = u^*$ thus yields a feasible recovery action for the given
  first-stage decision~$x^*$, implying that
  Problems~\eqref{eq:recoverable-robust-problem} and~\eqref{eq:ddrop} admit the
  same objective value.
  Overall, this shows that both problems are equivalent whenever~$\bar n - k
  \leq \Gamma$.
\end{proof}

The connection between Problem~\eqref{eq:recoverable-robust-problem}
and decision-dependent robust optimization established in
Proposition~\ref{prop:relation-to-ddrop}
also indicates a close relationship between~$k$-delete recoverable robustness
and bilevel optimization, as highlighted by recent results in
\textcite{Lefebvre_et_al:2025,Goerigk_et_al:2025}.


\section{Reformulations}
\label{sec:reformulations}

Recoverable robust combinatorial optimization problems are notoriously hard to
solve; see, e.g., \textcite{Gruene_Wulf:2024} for recent complexity results. In
addition, the nonlinear objective of the~$k$-delete recoverable robust
problem~\eqref{eq:recoverable-robust-problem} introduces further computational
challenges, motivating the need for effective solution approaches. In this
section, we present four reformulations
of~\eqref{eq:recoverable-robust-problem}---a scenario-based, a
projection-based, an extended, and a compact reformulation---for which we
develop exact solution methods in Section~\ref{sec:solution-approaches}.

\subsection{Scenario-Based Formulation}
\label{sec:scenario-based-ref}

The first reformulation of Problem~\eqref{eq:recoverable-robust-problem} is
obtained by introducing binary variables~$y^s \in \set{0,1}^n$ and additional
constraints to model the recovery actions for each scenario~$s \in \scenarios$.
To hedge against the worst-case realization of the uncertainty, we
use an epigraph reformulation so that
Problem~\eqref{eq:recoverable-robust-problem} can
equivalently be re-stated as
\newpage
\begin{subequations}
  \label{eq:scenario-ref}
  \begin{align}
    \min_{x,y,\eta} \quad & \initCost^\top x + \eta
    \\
    \st \quad & \eta \geq \sum_{i=1}^n \scenarioCost{s}_i y^s_i,
                \quad s \in \scenarios,
                \label{eq:scenario-ref:worst-case}
    \\
    & \sum_{i=1}^n y^s_i \geq \sum_{i=1}^n x_i - k, \quad s \in \scenarios,
      \label{eq:scenario-ref:k-delete}
    \\
    & x \geq y^s, \quad s \in \scenarios,
      \label{eq:scenario-ref:recover}
    \\
    & x \in X,\, \eta \in \R_{\geq 0},\, y^s \in \Set{0,1}^n, \quad s \in
      \scenarios.
  \end{align}
\end{subequations}
Here and in what follows, we abbreviate $v = (v^a)_{a \in A}$ with $A$ being a
discrete finite set and~$v^a$, $a \in A$, being a vector of appropriate
dimension.
The constraints in~\eqref{eq:scenario-ref:worst-case} capture that the
decision-maker hedges against the worst-possible realization of the
uncertainty, whereas~\eqref{eq:scenario-ref:k-delete} guarantees that at
most~$k$ components of the first-stage decision~$x \in X$ are deleted in each
scenario~$s \in \scenarios$.
The constraints in~\eqref{eq:scenario-ref:recover} model that only those
elements contained in a fist-stage decision can be deleted.

A feature of the scenario-based formulation~\eqref{eq:scenario-ref} is that a
solution also specifies the recovery action to implement for each
scenario.
However, this leads to a considerably large model in general, as the number of
additional variables and constraints grows as~$O(n^{\Gamma+1})$.
In general, it is thus not possible to apply general-purpose MILP solvers
directly to solve Problem~\eqref{eq:scenario-ref}.

\subsection{Projection-Based Formulation}
\label{sec:proj-ref}

The second reformulation of Problem~\eqref{eq:recoverable-robust-problem} is
obtained by projecting the recovery actions onto the first-stage decisions and
considering a new scenario set. For this purpose, we build on the fact
that Problem~\eqref{eq:recoverable-robust-problem} is equivalent to
\begin{equation*}
  \min_x \quad \initCost^\top x + \beta(x) \quad \st \quad x \in X
\end{equation*}
with
\begin{equation*}
  \beta(x) = \max_u \Defset{\sum_{i=1}^n \nomCost_ix_i + \sum_{i=1}^n
    \devCost_ix_iu_i - \gamma(x,u)}{u \in \uncertaintySet_\Gamma}
\end{equation*}
and
\begin{equation*}
  \gamma(x,u) = \max_y \Defset{\sum_{i=1}^n \left( \nomCost_ix_i +
      \devCost_ix_iu_i
    \right)y_i}{\sum_{i=1}^n y_i \leq k,\, y \in \Set{0,1}^n}.
\end{equation*}
In addition, we introduce the following notation, which we use throughout the
remainder of this paper. We define
\begin{equation*}
  \newUncertaintySet \define \Set{0} \cup \Defset{\nomCost_i}{i \in N} \cup
  \Defset{\nomCost_i + \devCost_i}{i \in N}.
\end{equation*}
Moreover, for each~$\nu \in \newUncertaintySet$, we define the cost functions
\begin{equation*}
  \costFun{i}{\nu} \define \min \Set{\devCost_i, \nu - \nomCost_i},
  \quad i \in N,
\end{equation*}
and set
\begin{equation*}
  \nomCostFun{i}{\nu} \define
  \begin{cases}
    \nu, & \text{if } \nu < \nomCost_i, \\
    \nomCost_i, & \text{if } \nu \geq \nomCost_i, \\
  \end{cases}
  \qquad \text{and} \qquad
  \devCostFun{i}{\nu} \define
  \begin{cases}
    0, & \text{if } \nu < \nomCost_i, \\
    \costFun{i}{\nu}, & \text{if } \nu \geq \nomCost_i. \\
  \end{cases}
\end{equation*}
For notational convenience, we further define
\begin{equation*}
  \negSet{\nu} \define \Defset{i \in N}{\costFun{i}{\nu} < 0}
  \quad \text{and} \quad
  \posSet{\nu} \define \Defset{i \in N}{\costFun{i}{\nu} \geq 0},
\end{equation*}
as well as
\begin{equation*}
  \leSet{\nu} \define \Defset{i \in N}{\costFun{i}{\nu} < \devCost_i}
  \quad \text{and} \quad
  \geqSet{\nu} \define \Defset{i \in N}{\costFun{i}{\nu} \geq \devCost_i}
\end{equation*}
for all~$\nu \in \newUncertaintySet$.
By definition, $\negSet{\nu} \subseteq \leSet{\nu}$, $\geqSet{\nu} \subseteq
\posSet{\nu}$, and~$N = \negSet{\nu} \cup \posSet{\nu}$ holds for
any~$\nu \geq 0$.

Before we can formally state the projection-based reformulation of
Problem~\eqref{eq:recoverable-robust-problem}, we need the following
intermediate results.
We start with two technical lemmas, whose proofs are deferred to
Appendix~\ref{sec:proofs} to keep this section self-contained.

\begin{lemma}
  \label{lem:proj-ref}
  For arbitrarily given~$x \in X$, it holds
  \begin{equation}
    \label{eq:proj-ref-alpha}
    \begin{split}
      \beta(x) = 
      \max_{u \in \uncertaintySet_\Gamma,\nu \geq 0}
      \Set{\sum_{i=1}^n \nomCost_i x_i - k\nu
      + \sum_{i \in \negSet{\nu}} \costFun{i}{\nu} x_i
      + \sum_{i \in \posSet{\nu}} \costFun{i}{\nu} x_i u_i}.
    \end{split}
  \end{equation}
\end{lemma}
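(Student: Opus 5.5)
The plan is to fix $x \in X$ and $u \in \uncertaintySet_\Gamma$, dualize the inner problem $\gamma(x,u)$, and then merge the resulting minimization over the dual variable into the outer maximization over $u$. Write $w_i \define (\nomCost_i + \devCost_i u_i)x_i \geq 0$ for $i \in N$. Then $\gamma(x,u)$ is the sum of the $k$ largest values $w_i$, which is the optimal value of
\begin{equation*}
  \max_y \Defset{w^\top y}{\sum_{i=1}^n y_i \leq k,\, y \in \set{0,1}^n}.
\end{equation*}

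First, I replace $y \in \set{0,1}^n$ by $0 \leq y \leq 1$ without changing the optimal value. This is allowed because the constraint matrix, a single all-ones row stacked on an identity, is totally unimodular and $k$ is an integer. Alternatively, one can argue directly that a greedy top-$k$ choice is optimal for the LP. LP duality, with a multiplier $\nu \geq 0$ for the cardinality constraint and $\pi_i \geq 0$ for $y_i \leq 1$, then gives
\begin{equation*}
  \gamma(x,u) = \min_{\nu \geq 0} \Set{k\nu + \sum_{i=1}^n \max\{0, w_i - \nu\}},
\end{equation*}
after eliminating $\pi_i = \max\{0, w_i - \nu\}$. Strong duality holds because the primal is feasible ($y = 0$) and bounded, so the minimum is attained.

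Second, I substitute this into $\beta(x)$. Since $\gamma$ enters with a minus sign,
\begin{equation*}
  \beta(x) = \max_{u \in \uncertaintySet_\Gamma,\, \nu \geq 0} \Set{-k\nu + \sum_{i=1}^n \bigl(w_i - \max\{0, w_i - \nu\}\bigr)}
  = \max_{u \in \uncertaintySet_\Gamma,\, \nu \geq 0} \Set{-k\nu + \sum_{i=1}^n \min\{w_i, \nu\}}.
\end{equation*}
Attainment of the maximum is not an issue. For fixed $u$ the objective is concave and piecewise linear in $\nu$, with breakpoints among the values $w_i$, and $\uncertaintySet_\Gamma$ is finite.

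Third, I verify termwise that
\begin{equation*}
  \min\{w_i, \nu\} = \nomCost_i x_i + \costFun{i}{\nu}x_i \ \text{ for } i \in \negSet{\nu},
  \qquad
  \min\{w_i, \nu\} = \nomCost_i x_i + \costFun{i}{\nu}x_i u_i \ \text{ for } i \in \posSet{\nu}.
\end{equation*}
\begin{itemize}
  \item If $x_i = 0$, both sides vanish, since $\nu \geq 0$ gives $\min\{0, \nu\} = 0$.
  \item If $x_i = 1$, then $\min\{w_i, \nu\} - \nomCost_i = \min\{\devCost_i u_i, \nu - \nomCost_i\}$.
  \item For $i \in \negSet{\nu}$, the condition $\costFun{i}{\nu} < 0$ together with $\devCost_i \geq 0$ forces $\nu < \nomCost_i$. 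The minimum therefore equals $\nu - \nomCost_i = \costFun{i}{\nu}$, independently of $u_i$.
  \item For $i \in \posSet{\nu}$, we have $\nu - \nomCost_i \geq 0$. Since $u_i \in \set{0,1}$, it follows that $\min\{\devCost_i u_i, \nu - \nomCost_i\} = u_i \min\{\devCost_i, \nu - \nomCost_i\} = \costFun{i}{\nu} u_i$.
\end{itemize}
Summing over $i \in N = \negSet{\nu} \cup \posSet{\nu}$ yields~\eqref{eq:proj-ref-alpha}.

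The only step needing real care is the first one: justifying the LP relaxation and duality for the inner top-$k$ problem. The remaining steps are bookkeeping, where the main thing to watch is the binary nature of $x_i$ and $u_i$ in the case split.
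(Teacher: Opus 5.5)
Your proposal is correct and follows essentially the same route as the paper. In both, you relax $\gamma(x,u)$ via total unimodularity, dualize, and eliminate the per-item dual variables to get $\min_{\nu\geq 0}\{k\nu+\sum_i\max\{0,w_i-\nu\}\}$, then case-split over $\negSet{\nu}$ and $\posSet{\nu}$. The only difference is cosmetic: you first collapse $w_i-\max\{0,w_i-\nu\}$ into $\min\{w_i,\nu\}$ (with $w_i$ your item weights), whereas the paper splits the $\max$ term according to $u_i\in\{0,1\}$, which makes your bookkeeping slightly shorter.
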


\begin{lemma}
  \label{lem:pos-neg-set}
  Let $\nu^* \geq 0$ be given arbitrarily and let
  \begin{equation*}
    \underline{\nu} \define \max \Defset{\nu}{\nu \in \newUncertaintySet,\, \nu
      \leq \nu^*} 
    \quad \text{and} \quad
    \bar{\nu} \define \min \Defset{\nu}{\nu \in \newUncertaintySet,\, \nu \geq \nu^*}.
  \end{equation*}
  Then, the following statements hold:
  \begin{enumerate}
  \item $\negSet{\bar{\nu}} \subseteq \negSet{\nu^*} \subseteq
    \negSet{\underline{\nu}}$ and
    $\posSet{\underline{\nu}} \subseteq \posSet{\nu^*} \subseteq
    \posSet{\bar{\nu}}$,
    \vspace*{0.25em}
  \item $\costFun{i}{\underline{\nu}} + (\nu^* - \underline{\nu}) = \costFun{i}{\nu^*}
    = \costFun{i}{\bar{\nu}} - (\bar{\nu} - \nu^*)$ 
    for all~$i \in \leSet{\nu^*}$,
    \vspace*{0.25em}
  \item $\costFun{i}{\underline{\nu}} = \costFun{i}{\nu^*} = \costFun{i}{\bar{\nu}}$
    for all~$i \in \geqSet{\nu^*}$,
    \vspace*{0.25em}
  \item $i \in \negSet{\nu^*} \cap \posSet{\bar{\nu}} \implies
    \costFun{i}{\bar{\nu}} = 0$,
    \vspace*{0.25em}
  \item $\nu^* \notin \newUncertaintySet \implies \posSet{\nu^*} \cap
    \negSet{\underline{\nu}} = \emptyset$.
  \end{enumerate}
\end{lemma}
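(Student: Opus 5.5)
The plan is to reduce everything to the piecewise-linear structure of $\nu \mapsto \costFun{i}{\nu} = \min\{\devCost_i, \nu - \nomCost_i\}$. For each $i$ this function is nondecreasing and continuous. It is linear with slope one for $\nu < \nomCost_i + \devCost_i$ and constant equal to $\devCost_i$ for $\nu \geq \nomCost_i + \devCost_i$. Moreover, $\costFun{i}{\nu} < 0$ holds if and only if $\nu < \nomCost_i$. Hence $\negSet{\nu} = \{i : \nu < \nomCost_i\}$, $\leSet{\nu} = \{i : \nu < \nomCost_i + \devCost_i\}$, and the complements are the threshold sets $\{i:\nu \geq \nomCost_i\}$ and $\{i : \nu \geq \nomCost_i+\devCost_i\}$. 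The breakpoints $\nomCost_i$ and $\nomCost_i+\devCost_i$ all lie in $\newUncertaintySet$. Because $0 \in \newUncertaintySet$ and $\nu^* \geq 0$, the value $\underline{\nu}$ exists. If $\nu^*$ exceeds $\max \newUncertaintySet$, the minimum defining $\bar\nu$ is empty; I will either note that the claims involving $\bar\nu$ are then vacuous or interpret $\bar\nu = +\infty$ consistently. In the main case, the key fact is that $\underline{\nu} \leq \nu^* \leq \bar\nu$ and no element of $\newUncertaintySet$ lies strictly between $\underline\nu$ and $\nu^*$, or strictly between $\nu^*$ and $\bar\nu$.

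With this in hand, the items follow in order. For (i), monotonicity of $\costFun{i}{\cdot}$ immediately gives $\negSet{\bar\nu} \subseteq \negSet{\nu^*} \subseteq \negSet{\underline\nu}$, and the statements for $\posSet{\cdot}$ follow by taking complements.

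For (ii), take $i \in \leSet{\nu^*}$, so $\nu^* < \nomCost_i + \devCost_i$. Then $\underline\nu \leq \nu^*$ is also below this breakpoint, so both values lie on the slope-one part and $\costFun{i}{\nu^*} - \costFun{i}{\underline\nu} = \nu^* - \underline\nu$. Since $\nomCost_i + \devCost_i \in \newUncertaintySet$ and it is at least $\nu^*$, we get $\bar\nu \leq \nomCost_i+\devCost_i$. Hence $\bar\nu$ is still in the linear regime, where the formula $\nu - \nomCost_i$ stays valid up to and including the breakpoint, and the second equality follows.

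For (iii), take $i \in \geqSet{\nu^*}$, so $\nu^* \geq \nomCost_i+\devCost_i$. This gives $\bar\nu \geq \nu^*$, so $\bar\nu$ is above the breakpoint. Also $\underline\nu \geq \nomCost_i+\devCost_i$, because that breakpoint is an element of $\newUncertaintySet$ that is at most $\nu^*$. So all three values equal $\devCost_i$.

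For (iv), take $i \in \negSet{\nu^*} \cap \posSet{\bar\nu}$. Then $\nu^* < \nomCost_i \leq \bar\nu$. Since $\nomCost_i \in \newUncertaintySet$ and $\nomCost_i \geq \nu^*$, minimality gives $\bar\nu \leq \nomCost_i$, so $\bar\nu = \nomCost_i$. Then $\costFun{i}{\bar\nu} = \min\{\devCost_i, 0\} = 0$.

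For (v), suppose $i \in \posSet{\nu^*} \cap \negSet{\underline\nu}$. Then $\underline\nu < \nomCost_i \leq \nu^*$. Since $\nomCost_i \in \newUncertaintySet$, maximality of $\underline\nu$ forces $\nomCost_i \leq \underline\nu$, a contradiction; this argument actually works even without using $\nu^* \notin \newUncertaintySet$. I will nevertheless double-check the boundary case $\nu^* = \nomCost_i$ against the strict and non-strict inequalities in the definitions, since that is where the hypothesis might matter.

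The main obstacle is purely bookkeeping: keeping strict versus non-strict inequalities consistent at the breakpoints, and handling the edge case where $\bar\nu$ does not exist. I will address the latter up front.
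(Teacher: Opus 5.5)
Your proposal is correct and follows essentially the same route as the paper: each item is checked directly from the breakpoint structure of $\costFun{i}{\cdot}$, using $\nomCost_i, \nomCost_i+\devCost_i \in \newUncertaintySet$ and the extremality of $\underline{\nu}$ and $\bar{\nu}$. Your side remarks are also right. The paper's argument for (v) only needs $\nomCost_i \leq \nu^*$, so the hypothesis $\nu^* \notin \newUncertaintySet$ is superfluous. The case $\nu^* > \max \newUncertaintySet$, where $\bar{\nu}$ is undefined, is left implicit in the paper and is harmless under either of your conventions.
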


We use Lemmas~\ref{lem:proj-ref} and~\ref{lem:pos-neg-set} to establish the
following intermediate result.

\begin{proposition}
  \label{prop:proj-ref}
  Problem~\eqref{eq:recoverable-robust-problem} can be solved as
  \begin{equation*}
    \min_{x \in X} \ \initCost^\top x
    + \max_{\nu \in \newUncertaintySet} \Set{\alpha(x,\nu) - k\nu},
  \end{equation*}
  where, for given~$x \in X$ and~$\nu \in \newUncertaintySet$, we have
  \begin{equation*}
    \alpha(x,\nu) = \max_{u}\Defset{\sum_{i=1}^n \nomCostFun{i}{\nu} x_i
      + \sum_{i=1}^n \devCostFun{i}{\nu} x_i u_i}{u \in
      \uncertaintySet_\Gamma}.
  \end{equation*}
\end{proposition}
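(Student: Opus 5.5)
Because Problem~\eqref{eq:recoverable-robust-problem} is equivalent to minimizing $\initCost^\top x + \beta(x)$ over $X$, it suffices to show, for every fixed $x \in X$,
\begin{equation*}
  \beta(x) = \max_{\nu \in \newUncertaintySet} \Set{\alpha(x,\nu) - k\nu}.
\end{equation*}
The starting point is Lemma~\ref{lem:proj-ref}, which writes $\beta(x)$ as a joint maximum over $u \in \uncertaintySet_\Gamma$ and $\nu \geq 0$ of
\begin{equation*}
  \Phi(u,\nu) \define \sum_{i=1}^n \nomCost_i x_i - k\nu + \sum_{i \in \negSet{\nu}} \costFun{i}{\nu} x_i + \sum_{i \in \posSet{\nu}} \costFun{i}{\nu} x_i u_i .
\end{equation*}
The proof then has two parts. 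First, identify the objective for fixed $\nu$ with $\alpha(x,\nu) - k\nu$. Second, show that the maximum over $\nu \geq 0$ may be restricted to the finite set $\newUncertaintySet$.

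\emph{Step 1 (identification).} Fix $\nu$ and compare coefficients index by index.
\begin{itemize}
  \item If $\nu < \nomCost_i$, then $\costFun{i}{\nu} = \nu - \nomCost_i < 0$, so $i \in \negSet{\nu}$. The contribution is $\nomCost_i x_i + (\nu - \nomCost_i)x_i = \nu x_i = \nomCostFun{i}{\nu} x_i$, and $\devCostFun{i}{\nu} = 0$.
  \item If $\nu \geq \nomCost_i$, then $\costFun{i}{\nu} \geq 0$, so $i \in \posSet{\nu}$. The contribution is $\nomCost_i x_i + \costFun{i}{\nu} x_i u_i = \nomCostFun{i}{\nu} x_i + \devCostFun{i}{\nu} x_i u_i$.
\end{itemize}
Hence $\Phi(u,\nu)$ equals the objective defining $\alpha(x,\nu)$ minus $k\nu$. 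Maximizing over $u$ gives
\begin{equation*}
  \beta(x) = \sup_{\nu \geq 0} \Set{\alpha(x,\nu) - k\nu}.
\end{equation*}
Since $\newUncertaintySet \subseteq \R_{\geq 0}$, the inequality $\geq$ in the claimed identity is immediate.

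\emph{Step 2 (reduction to $\newUncertaintySet$).} This is the main obstacle. Take an arbitrary $\nu^* \geq 0$ with $\nu^* \notin \newUncertaintySet$, together with a maximizing $u$. I will show that $\Phi(u,\cdot)$ does not decrease when $\nu^*$ is moved to one of its neighbours $\underline{\nu}$ or $\bar{\nu}$ from Lemma~\ref{lem:pos-neg-set}. Since $0 \in \newUncertaintySet$, $\underline{\nu}$ always exists. If $\nu^*$ exceeds $\max \newUncertaintySet$, then every index lies in $\geqSet{\nu^*}$, so by Lemma~\ref{lem:pos-neg-set}(iii) only the term $-k\nu$ changes, and moving down to $\underline{\nu}$ helps. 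So assume both neighbours exist.

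The key claim is that $\Phi(u,\cdot)$ is affine on $[\underline{\nu},\bar{\nu}]$. By Lemma~\ref{lem:pos-neg-set}(i) and~(v), the sets $\negSet{\cdot}$ and $\posSet{\cdot}$ are constant on the open interval. By (ii) and (iii), each $\costFun{i}{\cdot}$ is either constant or has slope $1$ there. Its slope is therefore
\begin{equation*}
  -k + \sum_{i \in \negSet{\nu^*}} x_i + \sum_{i \in \posSet{\nu^*} \cap \leSet{\nu^*}} x_i u_i .
\end{equation*}

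An affine function attains its maximum over an interval at an endpoint, so it remains to check that the endpoint values agree with the formula evaluated at $\underline{\nu}$ or $\bar{\nu}$. These are exactly the boundary cases where an index changes membership between $\negSet{\cdot}$ and $\posSet{\cdot}$.
\begin{itemize}
  \item At $\bar{\nu}$: an index $i \in \negSet{\nu^*} \cap \posSet{\bar{\nu}}$ has $\costFun{i}{\bar{\nu}} = 0$ by Lemma~\ref{lem:pos-neg-set}(iv). Its term vanishes in either representation, so continuity holds from the left.
  \item At $\underline{\nu}$: Lemma~\ref{lem:pos-neg-set}(v) gives $\posSet{\nu^*} \cap \negSet{\underline{\nu}} = \emptyset$. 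Together with (i), this means the sets do not change when moving down.
\end{itemize}
So, depending on the sign of the slope, moving to $\underline{\nu}$ or to $\bar{\nu}$ does not decrease $\Phi$. Combined with Step 1, this yields the inequality $\leq$ and proves the proposition.
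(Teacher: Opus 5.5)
Your proof is correct and follows essentially the same route as the paper. Both start from Lemma~\ref{lem:proj-ref}, use Lemma~\ref{lem:pos-neg-set} to compare $\nu^*\notin\newUncertaintySet$ with $\underline{\nu}$ and $\bar{\nu}$, and choose the neighbour by the sign of the same slope $-k + \sum_{i \in \negSet{\nu^*}} x_i + \sum_{i \in \posSet{\nu^*}\cap\leSet{\nu^*}} x_iu_i$, before identifying the objective with $\alpha(x,\nu)-k\nu$. Your affine-on-$[\underline{\nu},\bar{\nu}]$ framing repackages the paper's two explicit chains of equalities, and your separate treatment of $\nu^* > \max\newUncertaintySet$ (where $\bar{\nu}$ is undefined) covers a case the paper leaves implicit.
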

\begin{proof}
  Let~$x \in X$ be given arbitrarily. Further, let~$(u^*,\nu^*)$ be an optimal
  solution to Problem~\eqref{eq:proj-ref-alpha}. We prove the claim by
  contradiction. To this end, suppose that~$\nu^* \notin \newUncertaintySet$
  holds and let~$\underline{\nu}$ and~$\bar{\nu}$ be defined as in
  Lemma~\ref{lem:pos-neg-set}. We distinguish two cases.
  First, suppose that
  \begin{equation*}
    \sum_{i \in \negSet{\nu^*}} x_i  + \sum_{i \in \posSet{\nu^*} \cap
      \leSet{\nu^*}} x_i u^*_i \leq k
  \end{equation*}
  holds.
  Then, by Lemmas~\ref{lem:proj-ref} and~\ref{lem:pos-neg-set}, we obtain
  \begin{align*}
    \beta(x)
    \overset{\text{Lemma~\ref{lem:proj-ref}}}{=}
    & \ \sum_{i=1}^n \nomCost_i x_i - k\nu^*
      + \sum_{i \in \negSet{\nu^*}} \costFun{i}{\nu^*} x_i
      + \sum_{i \in \posSet{\nu^*}} \costFun{i}{\nu^*} x_i u^*_i
    \\
    \overset{\phantom{\text{Lemma~\ref{lem:proj-ref}}}}{=}
    & \ \sum_{i=1}^n \nomCost_i x_i - k\nu^*
      + \sum_{i \in \negSet{\nu^*}} \costFun{i}{\nu^*} x_i
    \\
      & + \sum_{i \in \posSet{\nu^*} \cap \leSet{\nu^*}} \costFun{i}{\nu^*} x_i
        u^*_i
        + \sum_{i \in \posSet{\nu^*} \cap \geqSet{\nu^*}} \costFun{i}{\nu^*}
        x_i u^*_i
    \\
    \overset{\substack{\text{Lemma~\ref{lem:pos-neg-set}}\\\text{(ii),(iii)}}}{=}
    & \ \sum_{i=1}^n \nomCost_i x_i - k\underline{\nu} - k(\nu^* -
      \underline{\nu})
      + \sum_{i \in \negSet{\nu^*}} \costFun{i}{\underline{\nu}} x_i
      + \sum_{i \in \negSet{\nu^*}} (\nu^* - \underline{\nu}) x_i
    \\
      & + \sum_{i \in \posSet{\nu^*} \cap \leSet{\nu^*}}
        \costFun{i}{\underline{\nu}} x_i u^*_i
        + \sum_{i \in \posSet{\nu^*} \cap \leSet{\nu^*}} (\nu^* -
        \underline{\nu}) x_i u^*_i
    \\
      & + \sum_{i \in \posSet{\nu^*} \cap \geqSet{\nu^*}}
        \costFun{i}{\underline{\nu}} x_i u^*_i
    \\
    \overset{\phantom{\text{Lemma~\ref{lem:proj-ref}}}}{=}
    & \ \sum_{i=1}^n \nomCost_i x_i - k\underline{\nu}
      + \sum_{i \in \negSet{\nu^*}} \costFun{i}{\underline{\nu}} x_i
      + \sum_{i \in \posSet{\nu^*}} \costFun{i}{\underline{\nu}} x_i u^*_i
    \\
      & + (\nu^* - \underline{\nu})
        \left( \sum_{i \in \negSet{\nu^*}} x_i
        + \sum_{i \in \posSet{\nu^*} \cap \leSet{\nu^*}} x_i u^*_i - k \right)
    \\
    \overset{\phantom{\text{Lemma~\ref{lem:proj-ref}}}}{\leq}
    & \ \sum_{i=1}^n \nomCost_i x_i - k\underline{\nu}
      + \sum_{i \in \negSet{\nu^*}} \costFun{i}{\underline{\nu}} x_i
      + \sum_{i \in \posSet{\nu^*}} \costFun{i}{\underline{\nu}} x_i
      u^*_i
    \\
    \overset{\substack{\text{Lemma~\ref{lem:pos-neg-set}}\\\text{(i)}}}{=}
    & \ \sum_{i=1}^n \nomCost_i x_i - k\underline{\nu}
      + \sum_{i \in \negSet{\underline{\nu}}} \costFun{i}{\underline{\nu}}
      x_i
      + \sum_{i \in \posSet{\underline{\nu}}} \costFun{i}{\underline{\nu}}
      x_i u^*_i
    \\
      & \ + \sum_{i \in \posSet{\nu^*} \cap \negSet{\underline{\nu}}}
        \costFun{i}{\underline{\nu}} x_i \left( u^*_i - 1 \right)
    \\
    \overset{\substack{\text{Lemma~\ref{lem:pos-neg-set}}\\\text{(v)}}}{=}
    & \ \sum_{i=1}^n \nomCost_i x_i - k\underline{\nu}
      + \sum_{i \in \negSet{\underline{\nu}}} \costFun{i}{\underline{\nu}}
      x_i
      + \sum_{i \in \posSet{\underline{\nu}}} \costFun{i}{\underline{\nu}}
      x_i u^*_i,
  \end{align*}
  \ie, $(u^*,\underline{\nu})$ is an optimal
  solution to Problem~\eqref{eq:proj-ref-alpha} as well.
  
  Second, suppose that
  \begin{equation*}
    \sum_{i \in \negSet{\nu^*}} x_i  + \sum_{i \in \posSet{\nu^*} \cap
      \leSet{\nu^*}} x_i u^*_i > k
  \end{equation*}
  holds.
  Then, again by Lemmas~\ref{lem:proj-ref} and~\ref{lem:pos-neg-set}, we obtain
  \begin{align*}
    \beta(x)
    \overset{\text{Lemma~\ref{lem:proj-ref}}}{=}
    & \ \sum_{i=1}^n \nomCost_i x_i - k\nu^*
      + \sum_{i \in \negSet{\nu^*}} \costFun{i}{\nu^*} x_i
      + \sum_{i \in \posSet{\nu^*}} \costFun{i}{\nu^*} x_i u^*_i
    \\
    \overset{\phantom{\text{Lemma~\ref{lem:proj-ref}}}}{=}
    & \ \sum_{i=1}^n \nomCost_i x_i - k\nu^*
      + \sum_{i \in \negSet{\nu^*}} \costFun{i}{\nu^*} x_i
    \\
      & \ + \sum_{i \in \posSet{\nu^*} \cap \leSet{v^*}} \costFun{i}{\nu^*} x_i
        u^*_i+ \sum_{i \in \posSet{\nu^*} \cap \geqSet{v^*}} \costFun{i}{\nu^*}
        x_i u^*_i
    \\
    \overset{\substack{\text{Lemma~\ref{lem:pos-neg-set}}\\\text{(ii),(iii)}}}{=}
    & \ \sum_{i=1}^n \nomCost_i x_i - k\bar{\nu}
      - k(\nu^* - \bar{\nu})
      + \sum_{i \in \negSet{\nu^*}} \costFun{i}{\bar{\nu}} x_i
      - \sum_{i \in \negSet{\nu^*}} (\bar{\nu} - \nu^*) x_i
    \\
      & \ + \sum_{i \in \posSet{\nu^*} \cap \leSet{v^*}} \costFun{i}{\bar{\nu}} x_i
        u^*_i
        - \sum_{i \in \posSet{\nu^*} \cap \leSet{v^*}} (\bar{\nu} - \nu^*) x_i u^*_i
    \\
      & \ + \sum_{i \in \posSet{\nu^*} \cap \geqSet{v^*}} \costFun{i}{\bar{\nu}} x_i
        u^*_i
    \\
    \overset{\phantom{\text{Lemma~\ref{lem:proj-ref}}}}{=}
    & \ \sum_{i=1}^n \nomCost_i x_i - k\bar{\nu}
      + \sum_{i \in \negSet{\nu^*}} \costFun{i}{\bar{\nu}} x_i
      + \sum_{i \in \posSet{\nu^*}} \costFun{i}{\bar{\nu}} x_i u^*_i
    \\
      & \ + (\bar{\nu} - \nu^*)
        \left( k - \sum_{i \in \negSet{\nu^*}} x_i
        - \sum_{i \in \posSet{\nu^*} \cap \leSet{v^*}} x_i u^*_i \right)
    \\
    \overset{\phantom{\text{Lemma~\ref{lem:proj-ref}}}}{<}
    & \ \sum_{i=1}^n \nomCost_i x_i - k\bar{\nu}
      + \sum_{i \in \negSet{\nu^*}} \costFun{i}{\bar{\nu}} x_i
      + \sum_{i \in \posSet{\nu^*}} \costFun{i}{\bar{\nu}} x_i u^*_i
    \\
    \overset{\substack{\text{Lemma~\ref{lem:pos-neg-set}}\\\text{(i)}}}{=}
    & \ \sum_{i=1}^n \nomCost_i x_i - k\bar{\nu}
      + \sum_{i \in \negSet{\bar{\nu}}} \costFun{i}{\bar{\nu}} x_i
      + \sum_{i \in \posSet{\bar{\nu}}} \costFun{i}{\bar{\nu}} x_i u^*_i
    \\
      & \ + \sum_{i \in \negSet{\nu^*} \cap \posSet{\bar{\nu}}}
        \costFun{i}{\bar{\nu}} x_i \left( 1 - u^*_i \right)
    \\
    \overset{\substack{\text{Lemma~\ref{lem:pos-neg-set}}\\\text{(iv)}}}{=}
    & \ \sum_{i=1}^n \nomCost_i x_i - k\bar{\nu}
      + \sum_{i \in \negSet{\bar{\nu}}} \costFun{i}{\bar{\nu}} x_i
      + \sum_{i \in \posSet{\bar{\nu}}} \costFun{i}{\bar{\nu}} x_i u^*_i.
  \end{align*}
  The latter contradicts the optimality of~$(u^*,\nu^*)$ for
  Problem~\eqref{eq:proj-ref-alpha}.
  To sum up, we can thus assume, \Wlog, that $\nu^* \in \newUncertaintySet$
  holds. By Lemma~\ref{lem:proj-ref}, we thus obtain
  \begin{align*}
    \beta(x)
    & = \max_{\nu \in \newUncertaintySet}
      \Set{\max_{u \in \uncertaintySet_\Gamma} \Set{\sum_{i=1}^n \nomCost_i x_i
        + \sum_{i \in \negSet{\nu}} \costFun{i}{\nu} x_i
      + \sum_{i \in \posSet{\nu}} \costFun{i}{\nu} x_i u_i} - k\nu}
    \\
    & = \max_{\nu \in \newUncertaintySet}
      \Set{\max_{u \in \uncertaintySet_\Gamma} \Set{\sum_{i=1}^n
      \nomCostFun{i}{\nu} x_i
      + \sum_{i=1}^n \devCostFun{i}{\nu} x_i u_i} - k\nu},
  \end{align*}
  where the last equality is due to the definition of~$\nomCostFun{i}{\nu}$
  and~$\devCostFun{i}{\nu}$, $i \in N$, $\nu \in \newUncertaintySet$.
\end{proof}

Because of~$\Abs{\newUncertaintySet} \in O(n)$,
Proposition~\ref{prop:proj-ref} implies that
Problem~\eqref{eq:recourse-problem} can be solved in polynomial time
if~$\nomCost, \devCost \in \Q^n$.
Next, we state the projection-based formulation of
Problem~\eqref{eq:recoverable-robust-problem}.

\begin{theorem}
  \label{thm:proj-ref}
  Problem~\eqref{eq:recoverable-robust-problem} can be solved as the
  mixed-integer linear problem
  \begin{equation}
    \label{eq:proj-ref}
    \begin{split}
      \min_{x,\eta} \quad & \initCost^\top x + \eta
      \\
      \st \quad & \eta \geq \sum_{i=1}^n \scenarioCostFun{i}{\nu}{s} x_i -
                  k\nu,
                  \quad \nu \in \newUncertaintySet,\, s \in \scenarios(\nu),
      \\
                          & x \in X,\, \eta \in \R_{\geq 0},
    \end{split}
  \end{equation}
  where, for all~$\nu \in \newUncertaintySet$, we have
  \begin{equation*}
    \scenarios(\nu) \define \Defset{s \in \Set{1,2,\ldots, M}}{
      \scenarioCostFun{i}{\nu}{s} \define \nomCostFun{i}{\nu} +
      \devCostFun{i}{\nu} u^s_i,\, i \in N,\, u^s \in \uncertaintySet_\Gamma}.
  \end{equation*}
\end{theorem}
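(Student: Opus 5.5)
Starting from Proposition~\ref{prop:proj-ref}, the proof is essentially an epigraph reformulation together with an enumeration of the inner maximization over $\uncertaintySet_\Gamma$. By that proposition, Problem~\eqref{eq:recoverable-robust-problem} is equivalent to minimizing $\initCost^\top x + \max_{\nu \in \newUncertaintySet}\{\alpha(x,\nu) - k\nu\}$ over $x \in X$. Because $\uncertaintySet_\Gamma = \{u^1,\dots,u^M\}$ is finite, we have for every fixed $x$ and $\nu$
\begin{equation*}
  \alpha(x,\nu) = \max_{s \in \{1,\dots,M\}} \sum_{i=1}^n \left(\nomCostFun{i}{\nu} + \devCostFun{i}{\nu}u^s_i\right)x_i
  = \max_{s \in \scenarios(\nu)} \sum_{i=1}^n \scenarioCostFun{i}{\nu}{s} x_i .
\end{equation*}
This follows directly from the definition of $\scenarios(\nu)$, whose elements are indexed by the $u^s \in \uncertaintySet_\Gamma$. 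Hence the objective equals $\initCost^\top x + \max_{\nu \in \newUncertaintySet,\, s \in \scenarios(\nu)}\{\sum_i \scenarioCostFun{i}{\nu}{s}x_i - k\nu\}$, which is a maximum over finitely many affine functions of $x$.

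The next step introduces the epigraph variable $\eta$. For fixed $x \in X$, the smallest $\eta$ satisfying all constraints of~\eqref{eq:proj-ref} is $\max\{0,\max_{\nu,s}(\cdots)\}$. It therefore remains to justify the sign restriction $\eta \geq 0$: we must check that the inner maximum is itself nonnegative, so that the bound $\eta \ge 0$ never cuts off the true value. The cleanest argument is that this maximum equals $\max_{s}R(x,s)$ by Proposition~\ref{prop:proj-ref}. Moreover, $R(x,s) \geq 0$, because all scenario costs $\scenarioCost{s}_i = \nomCost_i + \devCost_i u^s_i$ are nonnegative and $y \geq 0$. Alternatively, one can use $\nu = 0 \in \newUncertaintySet$: then $\nomCostFun{i}{0}$ and $\devCostFun{i}{0}$ are both nonnegative (the case $\nu<\nomCost_i$ gives $0$, and if $\nomCost_i=0$ then $\costFun{i}{0}=\min\{\devCost_i,0\}=0$), so that constraint already yields a nonnegative right-hand side. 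Either route suffices.

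Finally, we conclude equivalence in both directions. Any optimal $x$ of the original problem, paired with $\eta$ equal to the maximum, is feasible for~\eqref{eq:proj-ref} with the same objective value. Conversely, for any feasible $(x,\eta)$ of~\eqref{eq:proj-ref}, the value $\initCost^\top x+\eta$ is at least the original objective at $x$. Thus the optimal values and optimal $x$ coincide. Linearity is immediate, since every constraint is linear in $(x,\eta)$ for fixed $\nu$ and $s$, and $x\in X\subseteq\{0,1\}^n$. The only potential obstacle is the nonnegativity of $\eta$, which the observation above about $\nu=0$ (or $R\ge 0$) resolves. All the substantive work was done in Proposition~\ref{prop:proj-ref}.
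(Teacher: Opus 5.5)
Your proposal is correct and follows the paper's proof: you invoke Proposition~\ref{prop:proj-ref} to obtain the epigraph form with constraints $\eta \geq \alpha(x,\nu) - k\nu$ for $\nu \in \newUncertaintySet$, and then expand $\alpha(x,\nu)$ as a maximum over the finitely many $u^s \in \uncertaintySet_\Gamma$, i.e., over $s \in \scenarios(\nu)$. Your additional check that the restriction $\eta \geq 0$ cuts nothing off is valid (for instance, the constraint for $\nu = 0$ has right-hand side exactly zero), and it fills in a detail that the paper leaves implicit.
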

\begin{proof}
  By Proposition~\ref{prop:proj-ref},
  Problem~\eqref{eq:recoverable-robust-problem} can be solved as
  \begin{align*}
    \min_{x,\eta} \quad & \initCost^\top x + \eta
    \\
    \st \quad & \eta \geq \alpha(x,\nu) - k\nu,
                \quad \nu \in \newUncertaintySet,
    \\
                        & x \in X,\, \eta \in \R_{\geq 0},
  \end{align*}
  where, for given~$x \in X$ and~$\nu \in \newUncertaintySet$, we have
  \begin{align*}
    \alpha(x,\nu)
    & = \max_u \Defset{\sum_{i=1}^n \nomCostFun{i}{\nu} x_i
      + \sum_{i=1}^n \devCostFun{i}{\nu} x_i u_i}{
      u \in \uncertaintySet_\Gamma = \Set{u^1, u^2, \ldots, u^M}}.
  \end{align*}
  The claim then follows from the definition of the projected scenario
  set~$\scenarios(\nu)$.
\end{proof}

Note that, for given~$\nu \in \newUncertaintySet$, the set~$\scenarios(\nu)$ is
again a budgeted uncertainty set. Compared to the set~$\scenarios$, the only
difference lies in the definition of the nominal values and deviations, which
are adjusted according to~$\nu$.
We further note that no additional variables are introduced in the
projection-based formulation~\eqref{eq:proj-ref} to model uncertainties or
recovery actions. However, we still need to add a considerable number of
constraints to capture these aspects, which grows as~$O(n^{\Gamma + 1})$.

\subsection{Extended Formulation}
\label{sec:ext-form}

Our third reformulation of Problem~\eqref{eq:recoverable-robust-problem} builds
on the projection-based formulation in Section~\ref{sec:proj-ref}. In contrast
to the previous formulation, however, we now allow for an extended variable
space that includes additional continuous variables for modeling recovery
actions.

\begin{theorem}
  \label{thm:extended-formulation}
  Problem~\eqref{eq:recoverable-robust-problem} can be solved as the
  mixed-integer linear problem
  \begin{equation}
    \label{eq:extended-formulation}
    \begin{split}
      \min_{x,\eta,w,z} \quad & \initCost^\top x + \eta
      \\
      \st \quad \ & \eta \geq \sum_{i=1}^n \nomCostFun{i}{\nu} x_i
                    + \Gamma w^\nu +
                    \sum_{i=1}^n z^\nu_i - k\nu,
                    \quad \nu \in \newUncertaintySet,
      \\
                              & w^\nu + z^\nu_i \geq \devCostFun{i}{\nu}
                                x_i,
                                \quad i \in N,\,
                                \nu \in \newUncertaintySet,
      \\
                              & x \in X,\, \eta \in \R_{\geq 0},\, w^\nu
                                \in \R_{\geq 0},\, z^\nu \in \R^n_{\geq
                                0},
                                \quad \nu \in \newUncertaintySet.
    \end{split}
  \end{equation}
\end{theorem}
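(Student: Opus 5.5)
The plan is to start from Proposition~\ref{prop:proj-ref}, which shows that Problem~\eqref{eq:recoverable-robust-problem} can be solved as
\begin{equation*}
  \min_{x \in X,\, \eta \geq 0} \ \initCost^\top x + \eta
  \quad \st \quad \eta \geq \alpha(x,\nu) - k\nu, \quad \nu \in \newUncertaintySet.
\end{equation*}
Each inner value $\alpha(x,\nu)$ is then replaced by an LP dual, in the standard Bertsimas--Sim manner. Fix $x \in X$ and $\nu \in \newUncertaintySet$. The term $\sum_{i} \nomCostFun{i}{\nu} x_i$ does not depend on $u$, so it remains to handle
\begin{equation*}
  \max_u \Defset{\sum_{i=1}^n \devCostFun{i}{\nu} x_i u_i}{u \in \uncertaintySet_\Gamma}.
\end{equation*}

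The first step is to justify passing to the LP relaxation $\{u \in [0,1]^n : \sum_i u_i \leq \Gamma\}$. Its constraint matrix is a single all-ones row together with identity bounds, which is totally unimodular. Since $\Gamma$ is integer, all vertices are binary, so the relaxation has the same optimal value. Alternatively, one can argue directly that an optimal solution picks the $\Gamma$ largest nonnegative coefficients.

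I must check that the coefficients $\devCostFun{i}{\nu} x_i$ are nonnegative; this matters only for the sign of the dual variables, not for validity. By definition, $\devCostFun{i}{\nu} = 0$ if $\nu < \nomCost_i$. Otherwise it equals $\min\{\devCost_i, \nu - \nomCost_i\} \geq 0$, because $\devCost_i \geq 0$ and $\nu \geq \nomCost_i$. Together with $x_i \in \{0,1\}$, every coefficient is nonnegative.

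The second step is LP duality. Assign a dual variable $w^\nu \geq 0$ to the budget constraint and $z^\nu_i \geq 0$ to each bound $u_i \leq 1$. Then
\begin{equation*}
  \max_u \{\cdots\} = \min \Defset{\Gamma w^\nu + \sum_{i=1}^n z^\nu_i}{w^\nu + z^\nu_i \geq \devCostFun{i}{\nu} x_i,\ i \in N,\ w^\nu, z^\nu \geq 0}.
\end{equation*}
Strong duality applies because the primal is feasible ($u = 0$) and bounded. Hence $\alpha(x,\nu) - k\nu$ equals the minimum over $(w^\nu, z^\nu)$ of $\sum_i \nomCostFun{i}{\nu} x_i + \Gamma w^\nu + \sum_i z^\nu_i - k\nu$.

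The third step substitutes into the epigraph constraints. Because the inner problem is a minimization sitting on the right-hand side of a ``$\geq$'' constraint, the condition $\eta \geq \min_{w,z}(\dots)$ holds if and only if there exist feasible $(w^\nu, z^\nu)$ with $\eta \geq (\dots)$. Doing this independently for each $\nu$ (each $\nu$ gets its own copy of the variables) gives exactly formulation~\eqref{eq:extended-formulation}. Equivalence then follows in both directions:
\begin{enumerate}
\item any feasible $(x,\eta,w,z)$ yields a feasible $(x,\eta)$ of the projected problem with the same objective, by weak duality;
\item any feasible $(x,\eta)$ extends to a feasible point by taking dual optimal $(w^\nu,z^\nu)$, by strong duality.
\end{enumerate}
So the optimal values and optimal $x$ coincide.

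The main obstacle is justifying the LP relaxation and duality for the integral set $\uncertaintySet_\Gamma$. This is handled by total unimodularity with integer $\Gamma$; the rest is routine.
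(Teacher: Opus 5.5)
Your proposal is correct and follows essentially the same route as the paper's proof in Appendix~\ref{sec:proof-thm-ext-form}: start from Proposition~\ref{prop:proj-ref}, pass from $\uncertaintySet_\Gamma$ to its LP relaxation by total unimodularity, dualize the budgeted inner problem via strong duality, and replace the minimum on the right-hand side of each $\geq$-constraint by per-$\nu$ variables $(w^\nu,z^\nu)$. Your two-direction argument (weak duality in one direction, dual optimal solutions in the other) is stated more cleanly than the paper's closing remark, and your nonnegativity check on $\devCostFun{i}{\nu}x_i$ is harmless but not needed.
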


The proof of Theorem~\ref{thm:extended-formulation} can be found in
Appendix~\ref{sec:proof-thm-ext-form}.
In Problem~\eqref{eq:extended-formulation}, we consider an extended variable
space that includes additional continuous variables to model the uncertainties
and recovery actions, whose number is in~$O(n^2)$.
Overall, this results in a considerably smaller model compared to the scenario-
and projection-based formulations presented earlier.

\subsection{Compact Formulation}
\label{sec:compact-form}

We now state our fourth and last reformulation of
Problem~\eqref{eq:recoverable-robust-problem}, which is our most compact one.
For the derivation, we build on the extended
formulation~\eqref{eq:extended-formulation} and eliminate the continuous
variables~$w^\nu$ and~$z^\nu$, $\nu \in \newUncertaintySet$.
For notational convenience, let us set
\begin{equation*}
  \devCost_{n+1} \define 0
  \quad \text{and} \quad
  \nomCost_{n+1} \define 0
\end{equation*}
so that we obtain
\begin{equation*}
  \costFun{n+1}{\nu} = \min \Set{\devCost_{n+1},\, \nu - \nomCost_{n+1}} = 0,
  \quad \nu \in \newUncertaintySet.
\end{equation*}
For all~$\nu \in \newUncertaintySet$, let now~$\sigma_\nu : N \to N$ be a
perturbation function that orders the components of~$\Delta c(\nu)$ in
non-increasing order, \ie,
\begin{equation*}
  \devCostFun{\sigma_\nu(1)}{\nu} \geq \devCostFun{\sigma_\nu(2)}{\nu}
  \geq \cdots \geq \devCostFun{\sigma_\nu(n)}{\nu}.
\end{equation*}
Moreover, let~$\gamma$ be the largest odd integer such that~$\Gamma +
\gamma < n + 1$ holds and
\begin{equation*}
  \mathcal{L} \define \Set{\Gamma + 1, \Gamma + 3, \Gamma + 5, \dots,
    \Gamma + \gamma, n + 1}.
\end{equation*}
For all~$\nu \in \newUncertaintySet$ and~$\ell \in \mathcal{L}$, we further
define the modified cost functions
\begin{equation*}
  \compCostFun{\sigma_\nu(i)}{\nu}{\ell} \define
  \begin{cases}
    \nomCostFun{\sigma_\nu(i)}{\nu}
    + \devCostFun{\sigma_\nu(i)}{\nu} - \devCostFun{\sigma_\nu(\ell)}{\nu},
    & 1 \leq i \leq \ell,
    \\
    \nomCostFun{\sigma_\nu(i)}{\nu}, & \ell + 1 \leq i \leq n,
  \end{cases}
\end{equation*}
as well as protection terms
\begin{equation*}
  \kappa_\ell(\nu) \define \Gamma \devCostFun{\sigma_\nu(\ell)}{\nu} - k\nu.
\end{equation*}
Using the above notation, we obtain the following intermediate result.

\begin{proposition}
  \label{prop:compact-formulation}
  Problem~\eqref{eq:recoverable-robust-problem} is equivalent to
  \begin{subequations}
    \label{eq:compact-ref-aux}
    \begin{align}
      \min_{x,\eta} \quad & \initCost^\top x + \eta
      \\
      \st \quad & \eta
                  \geq \min_{\ell \in \mathcal{L}} \Set{\kappa_\ell(\nu)
                  + \sum_{i=1}^n \compCostFun{\sigma_\nu(i)}{\nu}{\ell}
                  x_{\sigma_\nu(i)}},
                  \quad \nu \in \newUncertaintySet,
                  \label{eq:compact-form-constr}
      \\
                          & x \in X,\, \eta \in \R_{\geq 0}.
    \end{align}
  \end{subequations}
\end{proposition}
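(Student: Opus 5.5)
Starting from the extended formulation, Theorem~\ref{thm:extended-formulation}, I will eliminate the continuous variables $w^\nu$ and $z^\nu$ for each fixed $\nu \in \newUncertaintySet$ separately. For fixed $x \in X$ and $\nu$, the smallest value of $\Gamma w^\nu + \sum_i z^\nu_i$ subject to $w^\nu + z^\nu_i \geq \devCostFun{i}{\nu} x_i$, $w^\nu, z^\nu \geq 0$ is attained by $z^\nu_i = (\devCostFun{i}{\nu}x_i - w^\nu)^+$. Hence the $\nu$-constraint in \eqref{eq:extended-formulation} is equivalent to
\begin{equation*}
  \eta \geq \sum_{i=1}^n \nomCostFun{i}{\nu} x_i - k\nu + \min_{w \geq 0} \Set{\Gamma w + \sum_{i=1}^n (\devCostFun{i}{\nu} x_i - w)^+}.
\end{equation*}
The inner function of $w$ is convex and piecewise linear, with breakpoints in $\{\devCostFun{\sigma_\nu(\ell)}{\nu} x_{\sigma_\nu(\ell)}\} \cup \{0\}$. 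I would like to restrict $w$ to the $x$-independent values $\devCostFun{\sigma_\nu(\ell)}{\nu}$, $\ell \in \{1,\dots,n+1\}$, where $\devCostFun{\sigma_\nu(n+1)}{\nu}:=0$ via the convention $\devCost_{n+1}=\nomCost_{n+1}=0$.

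For $w = \devCostFun{\sigma_\nu(\ell)}{\nu}$, I use binarity of $x$ and the ordering $\sigma_\nu$. If $i \le \ell$, then $(\devCostFun{\sigma_\nu(i)}{\nu}x_{\sigma_\nu(i)} - w)^+ = (\devCostFun{\sigma_\nu(i)}{\nu} - \devCostFun{\sigma_\nu(\ell)}{\nu}) x_{\sigma_\nu(i)}$. If $i > \ell$, the term is $0$. Substituting gives exactly $\kappa_\ell(\nu) + \sum_i \compCostFun{\sigma_\nu(i)}{\nu}{\ell} x_{\sigma_\nu(i)}$. The term for $i=\ell$ vanishes, so it does not matter whether index $\ell$ is assigned to the first or the second case.

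Every such $w$ is feasible, so the minimum over $w$ is at most the minimum over all $\ell$. For the reverse inequality I must show that some $w = \devCostFun{\sigma_\nu(\ell)}{\nu}$ is optimal for each binary $x$. The function $g(w) = \Gamma w + \sum_i (d_i x_i - w)^+$, with $d_i := \devCostFun{\sigma_\nu(i)}{\nu}$ in sorted order, has slope $\Gamma - |\{i : x_i d_i > w\}|$ between breakpoints. Hence $g$ is minimized at $w = d_{j}$, where $j$ is the index of the $(\Gamma+1)$-st largest value among the selected $d_i$ (i.e.\ those with $x_i=1$), or at $w=0$ if fewer than $\Gamma+1$ items are selected. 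Both candidates lie in the set $\{d_\ell\}_{\ell=1}^{n+1}$. This proves that \eqref{eq:compact-form-constr} is valid with the minimum taken over all $\ell \in \{1,\dots,n+1\}$.

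The main obstacle is reducing the range of $\ell$ from $\{1,\dots,n+1\}$ to $\mathcal{L}$. Indices $\ell \le \Gamma$ can be discarded: for them the sequence $h_\ell$ of objective values satisfies $h_{\ell}\ge h_{\ell+1}$ for $\ell\le\Gamma$, by the slope argument. Concretely, $h_{\ell+1} - h_\ell = (d_\ell - d_{\ell+1})(\Gamma - \sum_{i\le \ell} x_{\sigma_\nu(i)}) \ge 0$ fails only when $\sum_{i \le \ell} x > \Gamma$, which is impossible for $\ell\le\Gamma$. Hence $h_{\ell+1} \le h_\ell$ there, so $\ell=\Gamma+1$ dominates $\ell \le \Gamma$.

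For the parity reduction (keeping only $\Gamma+1, \Gamma+3, \dots$ and $n+1$) I will follow the known argument from the Bertsimas--Sim literature, due to Atamtürk or Lee--Kwon. It shows $\min\{h_\ell, h_{\ell+1}\}$ is attained by the appropriate odd offset: for $\ell$ and $\ell+1$ consecutive, the increment $h_{\ell+1}-h_\ell = (d_\ell-d_{\ell+1})(\Gamma-\sum_{i\le\ell}x_{\sigma_\nu(i)})$ changes sign at most once as $\ell$ grows. It is therefore enough to compare $h_{\ell-1}$, $h_\ell$ and $h_{\ell+1}$ around the optimal index, using the fact that $\sum_{i\le\ell}x_{\sigma_\nu(i)}$ increases by at most one per step. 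From this I will show that whenever an even offset $\ell$ is optimal, one of its neighbours $\ell\pm1$, which has odd offset or equals $n+1$, is also optimal. This step needs the most care, in particular near the boundary $\ell=n+1$ and the definition of $\gamma$.
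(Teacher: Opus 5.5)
Your proposal is correct and follows the paper's route: eliminate $(w^\nu,z^\nu)$ from the extended formulation of Theorem~\ref{thm:extended-formulation}, restrict the inner minimum over $w$ to the Bertsimas--Sim breakpoints $w=\Delta c_{\sigma_\nu(\ell)}(\nu)$, and then apply the Lee--Kwon parity reduction to $\mathcal{L}$; the paper only cites these two results for the resulting identity, whereas you derive it pointwise for every binary $x$, which is what is needed because all $\nu$-constraints share the same $x$. One slip to fix: with $S_\ell := \sum_{i\le \ell} x_{\sigma_\nu(i)}$ the increment is $h_{\ell+1}-h_\ell=(d_\ell-d_{\ell+1})(S_\ell-\Gamma)$, not $(d_\ell-d_{\ell+1})(\Gamma-S_\ell)$ (your stated conclusion $h_{\ell+1}\le h_\ell$ for $\ell\le\Gamma$ is right, but your displayed formula contradicts it), and with the correct sign the parity step is immediate, since an optimal $\ell\in\{\Gamma+2,\dots,n\}$ with no optimal neighbour would require $S_{\ell-1}<\Gamma<S_\ell$, which is impossible because $S_\ell\le S_{\ell-1}+1$.
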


The proof of Proposition~\ref{prop:compact-formulation} can be found in
Appendix~\ref{sec:proof-bs}. Because of~\eqref{eq:compact-form-constr},
Problem~\eqref{eq:compact-ref-aux} cannot be tackled by a general-purpose
solver directly. However, an equivalent MILP reformulation of
Problem~\eqref{eq:compact-ref-aux} can be obtained by modeling the selection
of~$\ell \in \mathcal{L}$ using auxiliary binary variables and SOS$1$
constraints. To this end, we further need to introduce the big-$M$ constants
\begin{equation*}
  M(\nu) \define \max_{\ell \in \mathcal{L}} \Set{
    \Gamma \devCostFun{\sigma_\nu(k)}{\nu}
    + \sum_{i=1}^n \max \Set{0,\, \compCostFun{\sigma_\nu(i)}{\nu}{\ell}}}
\end{equation*}
for all~$\nu \in \newUncertaintySet$.
Next, we state the compact MILP reformulation of
Problem~\eqref{eq:recoverable-robust-problem}.

\begin{theorem}
  \label{thm:compact-formulation}
  Problem~\eqref{eq:recoverable-robust-problem} can be solved as the
  mixed-integer linear problem
  \begin{subequations}
    \label{eq:compact-ref}
    \begin{align}
      \min_{x,\eta,y} \quad & \initCost^\top x + \eta
      \\
      \st \quad & \eta
                  \geq \kappa_\ell(\nu)
                  + \sum_{i=1}^n \compCostFun{\sigma_\nu(i)}{\nu}{\ell}
                  x_{\sigma_\nu(i)} - M(\nu)(1 - y^\nu_\ell),
                  \quad \ell \in \mathcal{L},\, \nu \in \newUncertaintySet,
      \\
                            & \sum_{\ell \in \mathcal{L}} y^\nu_\ell
                              = 1,
                              \quad \nu \in \newUncertaintySet,
                              \label{eq:compact-ref:sos1}
      \\
                            & x \in X,\, \eta \in \R_{\geq 0},\,
                              y^\nu \in \Set{0,1}^{\Abs{\mathcal{L}}},
                              \quad \nu \in \newUncertaintySet.
                              \label{eq:compact-ref:vars}
    \end{align}
  \end{subequations}
\end{theorem}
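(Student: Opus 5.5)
The plan is to reduce Theorem~\ref{thm:compact-formulation} to Proposition~\ref{prop:compact-formulation}. We show that, for every fixed $x \in X$, the smallest feasible $\eta$ in~\eqref{eq:compact-ref} equals the smallest feasible $\eta$ in~\eqref{eq:compact-ref-aux}. It is convenient to abbreviate, for $\nu \in \newUncertaintySet$ and $\ell \in \mathcal{L}$,
\begin{equation*}
  g_\ell(x,\nu) \define \kappa_\ell(\nu) + \sum_{i=1}^n \compCostFun{\sigma_\nu(i)}{\nu}{\ell} x_{\sigma_\nu(i)} .
\end{equation*}
Since the two problems share the same $x$, the same objective, and decouple over $\nu$ in the $y$-variables, it suffices to fix $x$ and $\nu$. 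We then show that
\begin{equation*}
  \min_{y^\nu} \max_{\ell \in \mathcal{L}} \Set{g_\ell(x,\nu) - M(\nu)(1-y^\nu_\ell)}
  \quad \text{over } y^\nu \in \set{0,1}^{\Abs{\mathcal{L}}},\ \textstyle\sum_\ell y^\nu_\ell = 1,
\end{equation*}
equals $\min_{\ell \in \mathcal{L}} g_\ell(x,\nu)$, at least whenever the latter is nonnegative. The sign condition matters because $\eta \geq 0$ is imposed in both problems, so only $\max\{0,\cdot\}$ of these quantities is relevant.

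First we argue that~\eqref{eq:compact-ref:sos1} and~\eqref{eq:compact-ref:vars} force exactly one $\ell^*$ with $y^\nu_{\ell^*}=1$. For that index the constraint reads $\eta \geq g_{\ell^*}(x,\nu)$, and for all other $\ell$ it reads $\eta \geq g_\ell(x,\nu) - M(\nu)$.

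\textbf{Direction 1: from~\eqref{eq:compact-ref} to~\eqref{eq:compact-ref-aux}.} Given a feasible $(x,\eta,y)$ for~\eqref{eq:compact-ref}, we immediately get $\eta \geq g_{\ell^*}(x,\nu) \geq \min_\ell g_\ell(x,\nu)$. Hence $(x,\eta)$ is feasible for~\eqref{eq:compact-ref-aux}.

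\textbf{Direction 2: from~\eqref{eq:compact-ref-aux} to~\eqref{eq:compact-ref}.} Given $(x,\eta)$ feasible for~\eqref{eq:compact-ref-aux}, we choose $\ell^* \in \argmin_\ell g_\ell(x,\nu)$ and set $y^\nu_{\ell^*}=1$, all other entries zero. The constraint for $\ell^*$ then holds. For $\ell \neq \ell^*$ we need $g_\ell(x,\nu) - M(\nu) \leq \eta$, and since $\eta \geq 0$ it suffices to show $g_\ell(x,\nu) \leq M(\nu)$. Using $x \in \set{0,1}^n$, we have $\sum_i \compCostFun{\sigma_\nu(i)}{\nu}{\ell} x_{\sigma_\nu(i)} \leq \sum_i \max\{0,\compCostFun{\sigma_\nu(i)}{\nu}{\ell}\}$ and $-k\nu \leq 0$. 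This leaves the bound $\Gamma\,\devCostFun{\sigma_\nu(\ell)}{\nu} \leq \Gamma\,\devCostFun{\sigma_\nu(k)}{\nu}$ still to be justified.

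\textbf{Main obstacle.} This last inequality is the step I expect to be the real difficulty, because the big-$M$ as written uses index $k$ rather than $\ell$. By the ordering $\sigma_\nu$, the inequality holds when $\ell \geq k$, which is not guaranteed in general. I would first try to settle it by monotonicity: $\devCostFun{\sigma_\nu(\ell)}{\nu} \leq \devCostFun{\sigma_\nu(1)}{\nu}$ bounds every term uniformly. So I would either read $M(\nu)$ with this maximal deviation (via the convention $\devCost_{n+1}=0$ and the index $\ell$, presumably intended), or check that the stated constant still dominates $g_\ell$ in the relevant cases. If the stated $M(\nu)$ does not suffice, I would note that any valid upper bound on $g_\ell(x,\nu)$ over $x\in X$ works and replace it by that bound.

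Once both directions hold for every $\nu$, the optimal values and the $x$-parts of the optimal solutions coincide. The claim then follows from Proposition~\ref{prop:compact-formulation}.
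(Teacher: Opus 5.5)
Your construction is the same as the paper's: pick $\ell^*_\nu \in \argmin_{\ell} g_\ell(x,\nu)$, let $y^\nu$ be its indicator, and use $\eta \geq 0$ to make the other big-$M$ constraints slack. Your Direction~1 is also correct, and the paper leaves it implicit.

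The gap is the step you flag yourself. As written, you never show that the \emph{stated} $M(\nu)$ satisfies $g_\ell(x,\nu) \le M(\nu)$ for all $\ell \in \mathcal{L}$. Your fallback of replacing $M(\nu)$ by a different constant would prove a different theorem. The paper's proof asserts exactly this inequality without spelling it out, so your suspicion is reasonable, but the stated constant does work. Comparing $g_\ell$ with the $\ell' = \ell$ entry of the maximum in $M(\nu)$ fails when $\ell < k$, as you observe. The missing idea is to compare with the entry $\ell' = n+1$ instead, and to absorb $\Gamma\,\devCostFun{\sigma_\nu(\ell)}{\nu}$ using $\ell > \Gamma$.

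Concretely, write $d_i \define \devCostFun{\sigma_\nu(i)}{\nu}$ and $b_i \define \nomCostFun{\sigma_\nu(i)}{\nu}$. Then $b_i \geq 0$ and $d_1 \geq \dots \geq d_n \geq d_{n+1} = 0$. Hence every modified cost satisfies $\compCostFun{\sigma_\nu(i)}{\nu}{\ell} \geq b_i \geq 0$. In particular, $\sum_{i=1}^n \max\{0, \compCostFun{\sigma_\nu(i)}{\nu}{n+1}\} = \sum_{i=1}^n (b_i + d_i)$.

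Now take $\ell \in \mathcal{L}$ with $\ell \le n$, so $\ell \geq \Gamma + 1$. For every $x \in \{0,1\}^n$, dropping $-k\nu \le 0$ and using nonnegativity of the modified costs gives
\[
g_\ell(x,\nu) \le \Gamma d_\ell + \sum_{i=1}^n b_i + \sum_{i=1}^{\ell} (d_i - d_\ell) = \sum_{i=1}^n b_i + \sum_{i=1}^{\ell} d_i - (\ell - \Gamma)\, d_\ell \le \sum_{i=1}^n (b_i + d_i).
\]
For $\ell = n+1$ the same bound is immediate, since $d_{n+1} = 0$. Because $\Gamma\,\devCostFun{\sigma_\nu(k)}{\nu} \geq 0$, the right-hand side is at most $\Gamma\,\devCostFun{\sigma_\nu(k)}{\nu} + \sum_{i=1}^n \max\{0, \compCostFun{\sigma_\nu(i)}{\nu}{n+1}\} \le M(\nu)$. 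That term enters only through its nonnegativity, so the convention for $k = 0$ is irrelevant.

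Hence $g_\ell(x,\nu) - M(\nu) \le 0 \le \eta$ for every $\ell \neq \ell^*_\nu$. With this inequality inserted in Direction~2, your proof of the theorem as stated is complete.
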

\begin{proof}
  We show that, for an optimal solution~$(x^*,\eta^*,y^*)$ to
  Problem~\eqref{eq:compact-ref}, the pair~$(x^*,\eta^*)$ is an optimal
  solution to Problem~\eqref{eq:compact-ref-aux}.
  We prove this by contradiction.
  To this end, let~$(x^*,\eta^*,y^*)$ be an optimal solution to
  Problem~\eqref{eq:compact-ref} and suppose that~$(x^*,\eta^*)$ does not
  solve Problem~\eqref{eq:compact-ref-aux}, \ie, there
  exists~$(\hat x,\hat \eta)$ feasible for Problem~\eqref{eq:compact-ref-aux}
  with~$\initCost^\top \hat x + \hat \eta < \initCost^\top x^* + \eta^*$.
  For all~$\nu \in \newUncertaintySet$, we define
  \begin{equation*}
    \ell^*_\nu \define \argmin_{\ell \in \mathcal{L}}
    \Set{\kappa_\ell(\nu) + \sum_{i=1}^n \compCostFun{\sigma_\nu(i)}{\nu}{\ell}
      \hat x_{\sigma_\nu(i)}}
    \quad \text{and} \quad
    \hat y^\nu_\ell \define
    \begin{cases}
      1, & \ell = \ell^*_\nu,
      \\
      0, & \ell \in \mathcal{L} \setminus \Set{\ell^*_\nu}.
    \end{cases}
  \end{equation*}
  By construction, $(\hat x,\hat \eta, \hat y)$
  with~$\hat y = (\hat y^\nu)_{\nu \in \newUncertaintySet}$
  satisfies~\eqref{eq:compact-ref:sos1} and~\eqref{eq:compact-ref:vars}.
  For all~$\nu \in \newUncertaintySet$, we further have
  \begin{align*}
    \hat \eta
    & \geq \min_{\ell \in \mathcal{L}} \Set{\kappa_\ell(\nu)
      + \sum_{i=1}^n \compCostFun{\sigma_\nu(i)}{\nu}{\ell} \hat
      x_{\sigma_\nu(i)}}
      = \kappa_{\ell^*_\nu}(\nu)
      + \sum_{i=1}^n \compCostFun{\sigma_\nu(i)}{\nu}{\ell^*_\nu} \hat
      x_{\sigma_\nu(i)}
    \\
    & = \kappa_{\ell^*_\nu}(\nu)
      + \sum_{i=1}^n \compCostFun{\sigma_\nu(i)}{\nu}{\ell^*_\nu} \hat
      x_{\sigma_\nu(i)} - M(\nu)(1 - y^\nu_{\ell^*_\nu})
  \end{align*}
  and
  \begin{align*}
    \hat \eta
    \geq & \ - k\nu + \Gamma \devCostFun{\sigma_\nu(\ell)}{\nu}
           + \sum_{i=1}^n \max \Set{0,\, \compCostFun{\sigma_\nu(i)}{\nu}{\ell}}
    \\
         & \ - \max_{\ell' \in \mathcal{L}} \Set{
           \Gamma \devCostFun{\sigma_\nu(k)}{\nu}
           + \sum_{i=1}^n \max \Set{0,\, \compCostFun{\sigma_\nu(i)}{\nu}{\ell'}}}
    \\
    = & \ \kappa_\ell(\nu)
        + \sum_{i=1}^n \max \Set{0,\, \compCostFun{\sigma_\nu(i)}{\nu}{\ell}}
        - M(\nu)
    \\
    \geq & \ \kappa_\ell(\nu)
           + \sum_{i=1}^n \max \Set{0,\,
           \compCostFun{\sigma_\nu(i)}{\nu}{\ell}} \hat x_{\sigma_\nu(i)}
           - M(\nu)
    \\
    \geq & \ \kappa_\ell(\nu)
           + \sum_{i=1}^n \max \Set{0,\,
           \compCostFun{\sigma_\nu(i)}{\nu}{\ell}} \hat x_{\sigma_\nu(i)}
           + \sum_{i=1}^n \min \Set{0,\,
           \compCostFun{\sigma_\nu(i)}{\nu}{\ell}} \hat x_{\sigma_\nu(i)}
           - M(\nu)
    \\
    = & \ \kappa_\ell(\nu) + \sum_{i=1}^n
        \compCostFun{\sigma_\nu(i)}{\nu}{\ell} \hat x_{\sigma_\nu(i)}
        - M(\nu)(1 - \hat{y}^\nu_{\ell})
  \end{align*}
  for all~$\ell \in \mathcal{L} \setminus \Set{\ell^*_\nu}$.
  Hence, the point~$(\hat x,\hat \eta,\hat y)$ is feasible for
  Problem~\eqref{eq:compact-ref}
  and has a better objective function value than~$(x^*,\eta^*,y^*)$.
  This is a contradiction to the optimality of~$(x^*,\eta^*,y^*)$.
  By Proposition~\ref{prop:compact-formulation}, this concludes the proof.
\end{proof}

The number of additional variables and constraints introduced for modeling the
uncertainties and recovery actions in Problem~\eqref{eq:compact-ref} also grows
as~$O(n^2)$. Compared to the extended formulation, however, the compact
formulation~\eqref{eq:compact-ref} does not make use of additional continuous
variables so that the combinatorial structure of the original problem is
preserved.

To conclude this section, we summarize the number of additional variables and
constraints introduced for modeling uncertainties and recovery actions
in the four presented reformulations of the~$k$-delete recoverable robust
problem~\eqref{eq:recoverable-robust-problem} in Table~\ref{tab:comp-refs}.

\begin{table}[b]
  \centering
  \caption{The number of additional variables (``\# variables'') and
    constraints (``\# constraints'') introduced for modeling uncertainties
    and recovery actions in the scenario-based, projection-based, extended, and
    compact reformulations of Problem~\eqref{eq:recoverable-robust-problem}. In
    particular, the number of additional continuous (``cont.'') and binary
    variables (``bin.'') is shown. Note that~$M \in O(n^\Gamma)$
    and~\mbox{$\Abs{\newUncertaintySet}, \Abs{\mathcal{L}} \in O(n)$}.}
  \begin{tabular}{lrrr}
    \toprule
    formulation & \multicolumn{2}{c}{\# variables} & \# constraints \\
    \cmidrule(lr){2-3}
    & cont. & bin. & \\
    \midrule
    scenario-based; see~\eqref{eq:scenario-ref}
                & $1$ & $Mn$ & $M(2 + n) + 1$\\
    projection-based; see~\eqref{eq:proj-ref}
                & $1$ & $0$ & $M\Abs{\newUncertaintySet} + 1$ \\
    extended; see~\eqref{eq:extended-formulation}
                & $\Abs{\newUncertaintySet}(n + 1) + 1$ & $0$
    & $2\Abs{\newUncertaintySet}(n + 1) + 1$ \\
    compact; see~\eqref{eq:compact-ref}
                & $1$ & $\Abs{\newUncertaintySet}\Abs{\mathcal{L}}$
    & $\Abs{\newUncertaintySet}(\Abs{\mathcal{L}} + 1) + 1$ \\
    \bottomrule
  \end{tabular}
  \label{tab:comp-refs}
\end{table}


\section{Exact Solution Approaches}
\label{sec:solution-approaches}

We now present exact solution approaches for the~$k$-delete
recoverable robust problem~\eqref{eq:recoverable-robust-problem}, which are
based on the four reformulations derived in
Section~\ref{sec:reformulations}. We consider three approaches:
\begin{enumerate}
\item solving the full model using a general-purpose MILP solver
  (MILP approach),
\item column-and-constraint generation (CCG), and
\item branch-and-cut (BnC).
\end{enumerate}
Table~\ref{tab:approaches} provides an overview of which methods are applicable
for each formulation.
Overall, we present eight approaches to solve the~$k$-delete recoverable robust
problem~\eqref{eq:recoverable-robust-problem}.
In Section~\ref{sec:milp}, we elaborate on the MILP
approach. Afterward, in Section~\ref{sec:ccg}, we present the
column-and-constraint generation algorithms.
Finally, we discuss the branch-and-cut frameworks in Section~\ref{sec:bnc}.
\begin{table}
  \centering
  \caption{Solution methods applicable to each formulation: solving the full
    model using a general-purpose MILP solver (MILP), column-and-constraint
    generation (CCG), and branch-and-cut (BnC).}
  \begin{tabular}{lccc}
    \toprule
    formulation & MILP & CCG & BnC \\
    \midrule
    scenario-based; see~\eqref{eq:scenario-ref}
                & \cross & \check & \cross \\
    projection-based; see~\eqref{eq:proj-ref}
                & \cross & \cross & \check \\
    extended; see~\eqref{eq:extended-formulation}
                & \check & \check & \check \\
    compact; see~\eqref{eq:compact-ref}
                & \check & \check & \check \\
    \bottomrule
  \end{tabular}
  \label{tab:approaches}
\end{table}

\subsection{MILP Approach}
\label{sec:milp}

For the extended formulation~\eqref{eq:extended-formulation} and the compact
formulation~\eqref{eq:compact-ref}, the number of additional
variables and constraints introduced for modeling uncertainties and recovery
actions is in~$O(n^2)$; cf.\ Table~\ref{tab:comp-refs}. Hence, these two
reformulations can, in principle, be solved directly using a general-purpose
MILP solver.
In practice, however, the size of the resulting MILP can still become
restrictive for larger instances and the LP bound may be rather weak.
We therefore also present CCG and BnC frameworks for these formulations.
In contrast, general-purpose MILP solvers cannot tackle the scenario-based
formulation~\eqref{eq:scenario-ref} and the projection-based
formulation~\eqref{eq:proj-ref} directly because of their significant number of
additional variables and constraints.

\subsection{Column-and-Constraint Generation}
\label{sec:ccg}

Column-and-constraint generation \parencite{Zeng_Zhao:2013} is an iterative
framework that can be 
used to solve robust optimization problems with a large number of
scenarios. The idea is to start from a relaxation of
the problem in which only a subset of the variables and constraints
associated with specific scenarios is included.
New scenarios, along with their corresponding variables and constraints, are
then generated and added iteratively.
To achieve this, the method alternates between solving a
\emph{(relaxed) master problem} and an \emph{adversarial problem}, where the
latter is used to identify critical scenarios that are missing from the current
master problem.
We now discuss how CCG is applied to the scenario-based, compact, and extended
reformulations of Problem~\eqref{eq:recoverable-robust-problem}.

\subsubsection{Scenario-Based Formulation}

Problem~\eqref{eq:scenario-ref} involves~$O(n^\Gamma)$ scenarios, which may
render enumerating all of them impractical. Applying CCG, we thus start from a
relaxation of the problem in which we only consider a subset of scenarios.
In iteration~$j$ of the method, we consider the (relaxed) master problem
\begin{equation}
  \label{eq:subset-scenario-ref}
  \begin{split}
    \min_{x,y,\eta} \quad & \initCost^\top x + \eta
    \\
    \st \quad & \eta \geq \sum_{i=1}^n \scenarioCost{s}_i y^s_i,
                \quad s \in \scenarios^j,
    \\
                          & \sum_{i=1}^n y^s_i \geq \sum_{i=1}^n x_i - k,
                            \quad s \in \scenarios^j,
    \\
                          & x \geq y^s, \quad s \in \scenarios^j,
    \\
                          & x \in X,\, \eta \in \R_{\geq 0},\, y^s \in
                            \Set{0,1}^n, \quad s \in \scenarios^j,
  \end{split}
\end{equation}
with~$\scenarios^j \subseteq \scenarios$.
To identify critical scenarios that are missing from this problem, we then
determine the worst-case recovery costs for a given~$x \in X$ by solving the
adversarial problem
\begin{equation}
  \label{eq:adversarial-prob}
  \max_{s \in \scenarios} \Set{R(x,s)}
  = \max_{s \in \scenarios} \Set{\min_{y \in \Set{0,1}^n} \Defset{\sum_{i=1}^n
    c^s_iy_i}{y \leq x,\, \sum_{i=1}^n y_i \geq \sum_{i=1}^n x_i - k}}.
\end{equation}
In Problem~\eqref{eq:adversarial-prob}, we consider the entire
scenario set~$\scenarios$ again. To tackle this problem effectively, we
thus resort to solving an equivalent compact reformulation, which we state in
the following proposition.

\begin{proposition}
  \label{prop:adversarial-prob-ref}
  Let~$x \in X$ be given arbitrarily. Then, Problem~\eqref{eq:adversarial-prob}
  can be solved as the mixed-integer linear problem
  \begin{equation}
    \label{eq:adversarial-prob-ref}
    \begin{split}
      \max_{z,\lambda,\mu} \quad
      & \left( \sum_{i=1}^n x_i - k\right) \lambda - \sum_{i=1}^n x_i\mu_i
      \\
      \st \quad \, & \sum_{i=1}^n z_i \leq \Gamma,
      \\
      & \lambda - \mu_i \leq \nomCost_i + \devCost_i z_i,
        \quad i \in N,
      \\
      & z \in \Set{0,1}^n,\, \mu \in \R^n_{\geq 0},\, \lambda \in
        \R_{\geq 0}.
    \end{split}
  \end{equation}
\end{proposition}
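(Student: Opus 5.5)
Fix $x \in X$ and a scenario $s \in \scenarios$, i.e., a vector $u^s \in \uncertaintySet_\Gamma$. I will dualize the inner recovery problem $R(x,s)$ by LP duality and then merge the outer maximization over $s$ with the dual maximization.

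The first step is to show that the inner minimization in~\eqref{eq:adversarial-prob} equals its LP relaxation. Its constraint system consists of $y \leq x$, $\sum_i y_i \geq \sum_i x_i - k$, and $0 \leq y \leq 1$. After fixing the integral $x$, the feasible set is a box intersected with a single cardinality constraint. This matrix is totally unimodular: it has one row of ones together with identity rows, i.e., an interval matrix. The right-hand sides $x_i$ and $\sum_i x_i - k$ are integral. Hence every vertex is integral, and
\begin{equation*}
  R(x,s) = \min_y \Defset{\sum_{i=1}^n c^s_i y_i}{\sum_{i=1}^n y_i \geq \sum_{i=1}^n x_i - k,\ y_i \leq x_i,\ y \geq 0}.
\end{equation*}
I can drop $y \leq 1$ because $x \leq 1$. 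Feasibility holds since $y = x$ is feasible, and the objective is bounded below by $0$ because $c^s \geq 0$.

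Next I form the dual. I assign $\lambda \geq 0$ to the cardinality constraint and $\mu_i \geq 0$ to $-y_i \geq -x_i$. The dual reads
\begin{equation*}
  \max_{\lambda,\mu} \Defset{\left(\sum_{i=1}^n x_i - k\right)\lambda - \sum_{i=1}^n x_i \mu_i}{\lambda - \mu_i \leq c^s_i,\ i \in N,\ \lambda \geq 0,\ \mu \geq 0}.
\end{equation*}
Strong duality applies because both problems are feasible: $\lambda = \mu = 0$ is dual feasible. Therefore $R(x,s)$ equals this dual value.

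Finally I substitute $c^s_i = \nomCost_i + \devCost_i u^s_i$. The outer maximization over $s \in \scenarios$ is then exactly a maximization over $z := u^s \in \uncertaintySet_\Gamma$, i.e., over $z \in \set{0,1}^n$ with $\sum_i z_i \leq \Gamma$. Two consecutive maximizations combine into one joint maximization over $(z,\lambda,\mu)$, which is Problem~\eqref{eq:adversarial-prob-ref}. The constraint $\lambda - \mu_i \leq \nomCost_i + \devCost_i z_i$ is linear because $z$ enters only on the right-hand side multiplied by a constant. This yields an MILP with no bilinear terms, and any optimal $z$ recovers a worst-case scenario.

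The only genuinely delicate step is the integrality of the recovery LP. It needs the total unimodularity argument, or alternatively the observation that the greedy solution, which keeps the $\sum_i x_i - k$ cheapest chosen items, is optimal for the LP. Both give the result, so I would cite the interval-matrix structure.
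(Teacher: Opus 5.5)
Your proposal is correct and follows the paper's proof essentially step by step. Both arguments relax the integrality of $y$ by total unimodularity and drop $y \leq 1$ as redundant given $y \leq x$. They then dualize the resulting LP, apply strong duality, and merge the outer maximization over $z \in \uncertaintySet_\Gamma$ with the dual maximization. You additionally justify strong duality explicitly through primal feasibility ($y = x$) and dual feasibility ($\lambda = \mu = 0$), which the paper leaves implicit.
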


The proof of Proposition~\ref{prop:adversarial-prob-ref} can be found in
Appendix~\ref{sec:proof-adversarial-prob}.
The CCG algorithm for solving the~$k$-delete recoverable robust
problem~\eqref{eq:recoverable-robust-problem} using the scenario-based
formulation is formally stated in Algorithm~\ref{alg:ccg}.
In Line~\ref{alg:ccg:init-bounds}, we initialize lower and upper bounds
for Problem~\eqref{eq:recoverable-robust-problem} with~$L$ and~$U$,
respectively. As long as the optimality gap is not closed, i.e., $U > L$ holds,
we alternate between solving the master problem
(Line~\ref{alg:ccg:solve-master}) and the MILP reformulation of the adversarial
problem (Line~\ref{alg:ccg:solve-adversarial-prob}).
In Line~\ref{alg:ccg:add-scenario}, the~$z^j$ component of an optimal solution
to the latter is then used to generate a new scenario that is
missing from the current formulation.
Note that we need to specify an initial subset of scenarios~$\scenarios^0$ in
Algorithm~\ref{alg:ccg}, which can be done in various ways.
A straightforward initialization is to consider only the nominal scenario, i.e,
to set~$\scenarios^0 \define \defset{s=1}{\scenarioCost{s} = \nomCost}$.

\begin{algorithm}
  \begin{algorithmic}[1]
    \REQUIRE An instance of Problem~\eqref{eq:recoverable-robust-problem}, a
    subset of scenarios~$\scenarios^0 \subseteq \scenarios$,
    exact solution methods for Problems~\eqref{eq:subset-scenario-ref}
    and~\eqref{eq:adversarial-prob-ref}
    \ENSURE An optimal solution to
    Problem~\eqref{eq:recoverable-robust-problem}
    \STATE Set~$L \gets - \infty$ and~$U \gets + \infty$.
    \label{alg:ccg:init-bounds}
    \FOR{$j = 0,1,\ldots$}
    \STATE Compute an optimal solution~$(x^j,y^j,\eta^j)$ to
    Problem~\eqref{eq:subset-scenario-ref} with the
    set~$\scenarios^j$.
    \label{alg:ccg:solve-master}
    \STATE Set~$L \gets \initCost^\top x^j + \eta^j$.
    \label{alg:ccg:update-lb}
    \IF{$L \geq U$}
    \RETURN $x^j$
    \ENDIF
    \STATE Compute an optimal solution~$(z^j,\lambda^j,\mu^j)$
    to the~$x^j$-parameterized problem~\eqref{eq:adversarial-prob-ref}
    and let~$R(x^j)$ denote its optimal objective function value.
    \label{alg:ccg:solve-adversarial-prob}
    \STATE Set~$U \gets \min \set{U,\, \initCost^\top x^j + R(x^j)}$.
    \label{alg:ccg:update-ub}
    \IF{$L \geq U$}
    \RETURN $x^j$
    \ENDIF
    \STATE Define a new scenario~$s$ with~$\scenarioCost{s}_i \gets
    \nomCost_i + \devCost_i z^j_i$ for all~$i \in N$.
    \STATE Set~$\scenarios^{j+1} \gets \scenarios^j \cup \set{s}$
    and~$j \gets j+1$.
    \label{alg:ccg:add-scenario}
    \ENDFOR
  \end{algorithmic}
  \caption{CCG for the Scenario-Based Formulation}
  \label{alg:ccg}
\end{algorithm}

\begin{theorem}
  \label{thm:ccg-correct}
  Algorithm~\ref{alg:ccg} terminates after finitely many iterations with a
  globally optimal solution to the~$k$-delete recoverable robust
  problem~\eqref{eq:recoverable-robust-problem}.
\end{theorem}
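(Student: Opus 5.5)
The plan is the standard CCG argument: valid lower and upper bounds, plus the fact that a scenario is never generated twice unless the bounds have met.

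\emph{Validity of the bounds.} Since $\scenarios^j \subseteq \scenarios$, Problem~\eqref{eq:subset-scenario-ref} is a relaxation of the scenario-based formulation~\eqref{eq:scenario-ref}. Indeed, dropping the constraints and variables $y^s$ for $s \in \scenarios\setminus\scenarios^j$ keeps every feasible point feasible after projection. By Section~\ref{sec:scenario-based-ref}, Problem~\eqref{eq:scenario-ref} is equivalent to~\eqref{eq:recoverable-robust-problem}. Hence $L = \initCost^\top x^j + \eta^j$ is a lower bound on the optimal value $z^*$ of~\eqref{eq:recoverable-robust-problem}.

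The master problem has an optimal solution in each iteration. It is feasible because $X \neq \emptyset$: take any $x \in X$, $y^s = x$, and $\eta$ large. It is bounded below by $0$ because $\initCost, \scenarioCost{s} \geq 0$. It has only finitely many choices for the binary variables, so the minimum is attained.

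By Proposition~\ref{prop:adversarial-prob-ref}, the optimal value $R(x^j)$ of~\eqref{eq:adversarial-prob-ref} equals $\max_{s\in\scenarios} R(x^j,s)$. So $\initCost^\top x^j + R(x^j)$ is the true objective value of the feasible point $x^j$, and $U$ is always an upper bound on $z^*$, attained by one of the iterates. Consequently, if the algorithm returns $x^j$ when $L \geq U$, then $z^* \geq L \geq U \geq z^*$. It remains to check that the returned $x^j$ attains $U$. In the second return this holds because $L$ is the current bound $\geq z^*$ and the value of $x^j$ is $\geq z^*$. Concretely, I will argue that at return time $\initCost^\top x^j + R(x^j) \leq$ the value attaining $U$, or simply note $L=U=z^*$ and that $x^j$ attains it. This last point needs care for the first return, since there $x^j$ may not be the iterate attaining $U$. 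I would handle it by showing that the first-return condition forces $\eta^j \geq R(x^j)$ (see below), or by stating that the algorithm returns the incumbent.

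\emph{Finiteness (the key step).} Let $s$ be the scenario given by $z^j$, where $z^j$ comes from an optimal solution of~\eqref{eq:adversarial-prob-ref}. I first show, from the proof of Proposition~\ref{prop:adversarial-prob-ref} (LP duality for the recourse problem, whose constraint matrix is totally unimodular), that $R(x^j,s) = R(x^j)$, i.e., the generated scenario is a worst case for $x^j$. For fixed $z$, the inner maximization over $(\lambda,\mu)$ is exactly the LP dual of the relaxation of $R(x^j,s)$, and that relaxation is integral.

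Now suppose $s \in \scenarios^j$ already. Then the master constraints for $s$ give $\eta^j \geq \sum_i \scenarioCost{s}_i y^{s}_i \geq R(x^j,s) = R(x^j)$, since $y^s$ is a feasible recovery for $x^j$. Hence $L = \initCost^\top x^j + \eta^j \geq \initCost^\top x^j + R(x^j) \geq U$, and the algorithm would have stopped at the second check. Therefore every iteration that does not terminate adds a genuinely new scenario. Since $\Abs{\scenarios} = M < \infty$, the algorithm terminates after at most $M$ iterations. At worst $\scenarios^j = \scenarios$, in which case the master problem is the full problem~\eqref{eq:scenario-ref} and $L = z^*$.

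The same inequality $\eta^j \geq R(x^j)$ shows that whenever termination occurs, the returned $x^j$ satisfies $\initCost^\top x^j + R(x^j) \leq L \leq z^*$ whenever its worst-case scenario is present. Otherwise, at the first check, $L \geq U$ holds with $L \leq z^* \leq U$, so $L = z^*$. I expect the main subtlety to be this first-return case together with the claim that $z^j$ induces an actual worst-case scenario. The latter I will secure by invoking the equivalence in Proposition~\ref{prop:adversarial-prob-ref} in its scenario-wise form, fixing $z$ and applying strong LP duality.
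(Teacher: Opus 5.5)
Your route is the paper's. The master problem~\eqref{eq:subset-scenario-ref} is a relaxation of~\eqref{eq:scenario-ref}, Proposition~\ref{prop:adversarial-prob-ref} makes $U$ a genuine upper bound, and finiteness comes from $\Abs{\scenarios}<\infty$.

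Your finiteness step is more careful than the paper's. The paper only says that in the worst case all scenarios end up in the master problem, which tacitly assumes that a non-terminating iteration never re-adds a scenario already in $\scenarios^j$. You supply exactly the missing justification: $z^j$ induces a worst-case scenario $s$ for $x^j$. For fixed $z^j$, weak LP duality and total unimodularity bound the objective value of $(z^j,\lambda^j,\mu^j)$ by $R(x^j,s)$, while by Proposition~\ref{prop:adversarial-prob-ref} this value equals $\max_{s'\in\scenarios}R(x^j,s')$. Hence $s\in\scenarios^j$ would give $\eta^j\geq R(x^j)$ and trigger the second check.

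The genuine gap is the first return, and your first suggested remedy cannot work. When the master problem has ties, $L\geq U$ at the first check does \emph{not} force $\eta^j\geq R(x^j)$. Take $n=2$, $X=\{(1,0),(0,1)\}$, $f=0$, $k=0$, $\Gamma=1$, $\bar c=(8,10)$, $\Delta c=(2,1)$. Then $R(x,s)=\sum_i c^s_ix_i$, and the true objective values of the two points are $10$ and $11$. The run goes as follows:
\begin{enumerate}
\item From the nominal scenario the master returns $x^0=(1,0)$ with $L=8$.
\item The adversary returns $z^0=(1,0)$ and $U=10$, and the scenario with costs $(10,10)$ is added.
\item In the next master problem both points have value $10$, so an exact solver may return $x^1=(0,1)$.
\item The first check then fires with $L=U=10$, and the algorithm outputs a point of true value $11$ (indeed $\eta^1=10<R(x^1)=11$).
\end{enumerate}
So at the first return one only gets $L=U=z^*$, which is also all the paper's proof establishes. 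The claim about the returned \emph{solution} requires returning the incumbent attaining $U$, i.e., your second option, which is a small modification of Algorithm~\ref{alg:ccg}.

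The second return needs no fix, but your justification for it is not right as written. The clean reason is that the first check of the same iteration failed, so $L<U_{\mathrm{old}}$, where $U_{\mathrm{old}}$ is the value of $U$ before Line~\ref{alg:ccg:update-ub}. This forces the update to set $U=f^\top x^j+R(x^j)$, whence $f^\top x^j+R(x^j)=U\leq L\leq z^*$.
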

\begin{proof}
  Let~$(x^j,y^j,\eta^j)$ be an optimal solution to the
  problem solved in Line~\ref{alg:ccg:solve-master} of
  Algorithm~\ref{alg:ccg} for some~$j \in \N$.
  Because~\eqref{eq:subset-scenario-ref} is a relaxation of
  Problem~\eqref{eq:scenario-ref}, $\initCost^\top x^j + \eta^j$
  is a valid lower bound for its optimal objective function value.
  Moreover, due to Proposition~\ref{prop:adversarial-prob-ref}, solving
  Problem~\eqref{eq:adversarial-prob-ref} for given~$x^j \in X$ yields a valid
  upper bound for Problem~\eqref{eq:scenario-ref}.
  Hence, the bound updates in Lines~\ref{alg:ccg:update-lb}
  and~\ref{alg:ccg:update-ub} are correct and~$L \leq U$ holds.
  If the termination criterion~$L \geq U$ is satisfied, we thus have~$L = U$,
  \ie, the optimality gap is closed.
  Finite termination now follows from the finiteness of the scenario
  set~$\scenarios$, the finiteness of the branch-and-cut methods used to solve
  the MILPs in Lines~\ref{alg:ccg:solve-master}
  and~\ref{alg:ccg:solve-adversarial-prob}, and from
  the fact that, in the worst case, the problem considered in
  Line~\ref{alg:ccg:solve-master} is the one in which all scenarios have been
  added, \ie, we solve Problem~\eqref{eq:scenario-ref}.
  The claim then follows from the equivalence of
  Problems~\eqref{eq:recoverable-robust-problem} and~\eqref{eq:scenario-ref}.
\end{proof}

Next, we present the CCG method applied to the compact formulation of
the~$k$-delete recoverable robust
problem~\eqref{eq:recoverable-robust-problem}.

\subsubsection{Compact Formulation}
\label{sec:ccg:compact-formulation}

In contrast to the scenario-based formulation~\eqref{eq:scenario-ref}, the
compact formulation~\eqref{eq:compact-ref} does not rely on the scenario
set~$\scenarios$ but on the sets~$\newUncertaintySet$ and~$\mathcal{L}$
instead. Although~$\Abs{\newUncertaintySet}\Abs{\mathcal{L}} \in O(n^2)$,
Problem~\eqref{eq:compact-ref} can still become large for practical instances.
Hence, applying CCG may also be beneficial in this setting.
In iteration~$j$ of the CCG method, we consider the (relaxed) master problem
\begin{equation}
  \label{eq:subset-compact-ref}
  \begin{split}
    \min_{x,\eta,y} \quad & \initCost^\top x + \eta
    \\
    \st \quad & \eta
                \geq \kappa_\ell(\nu)
                + \sum_{i=1}^n \compCostFun{\sigma_\nu(i)}{\nu}{\ell}
                x_{\sigma_\nu(i)}  - M(\nu)(1 - y^\nu_\ell),
                \quad \ell \in \mathcal{L},\, \nu \in \newUncertaintySet^j,
    \\
                          & \sum_{\ell \in \mathcal{L}} y^\nu_\ell
                            = 1,
                            \quad \nu \in \newUncertaintySet^j,
    \\
                          & x \in X,\, \eta \in \R_{\geq 0},\,
                            y^\nu \in \Set{0,1}^{\Abs{\mathcal{L}}},
                            \quad \nu \in \newUncertaintySet^j,
  \end{split}
\end{equation}
where~$\newUncertaintySet^j \subseteq \newUncertaintySet$.
Due to Proposition~\ref{prop:proj-ref}, the worst case for a given~$x \in X$
and~$\nu \in \newUncertaintySet$ can be obtained by solving
\begin{equation}
  \label{eq:adversarial-prob:compact}
  \max_{u} \quad \sum_{i=1}^n \left( \nomCostFun{i}{\nu} +
      \devCostFun{i}{\nu} u_i \right) x_i - k\nu
  \quad \st \quad u \in \uncertaintySet_\Gamma.
\end{equation}
Solving Problem~\eqref{eq:adversarial-prob:compact} avoids computing the cost
coefficients~$\compCostFun{i}{\nu}{\ell}$ for all~$i \in N$, 
$\ell \in \mathcal{L}$, and~$\nu \in \newUncertaintySet$, which may
be quite costly in practice. Moreover, we note that
Problem~\eqref{eq:adversarial-prob:compact} can be solved efficiently using a 
classic greedy algorithm.

We formally state the CCG method for the compact formulation in
Algorithm~\ref{alg:ccg:compact-ref}.
The first steps of the method are similar to those in Algorithm~\ref{alg:ccg}.
However, in Line~\ref{alg:ccg:compact-ref:solve-subprob}, we now solve an
auxiliary problem for fixed~$x^j$ across
all~\mbox{$\nu \in \newUncertaintySet$}.
Note that the problems considered in
Line~\ref{alg:ccg:compact-ref:solve-subprob} are independent of each other,
i.e., they can be solved in parallel.
Although one can, in principle, augment the set~$\newUncertaintySet^j$ with
every~$\nu \in \newUncertaintySet$ for which~\mbox{$\eta^j < R(x^j,\nu)$}
holds, we only add the worst-case uncertainty realization~$\bar \nu$ to control
the number of variables and constraints added in each iteration; see
Lines~\ref{alg:ccg:compact-ref:update-ub}--\ref{alg:ccg:compact-ref:add-scenario}.
Moreover, the for-loop in Line~\ref{alg:ccg:compact-ref:for-loop} can, in
principle, be terminated as soon as a~$\nu \in \newUncertaintySet$
with~$\eta^j < R(x^j,\nu)$ is found. This can help reduce the computational
burden of the CCG method.
In this case, however, the upper bound cannot be updated in
Line~\ref{alg:ccg:compact-ref:update-ub}
as~$R(x^j,\nu) < \max_{\bar \nu \in \newUncertaintySet} \set{R(x^j,\bar \nu)}$
may hold.
To achieve a trade-off between the frequency at which the upper bound is
updated and the computational burden of the method,
we solve Problem~\eqref{eq:adversarial-prob:compact} for all~$\nu \in
\newUncertaintySet$ only in selected iterations, e.g., every
$42$nd iteration. In what follows, we refer to this as the
\emph{full evaluation frequency}.
Finally, we note that finite termination of
Algorithm~\ref{alg:ccg:compact-ref} with a globally
optimal solution to Problem~\eqref{eq:recoverable-robust-problem} can be shown
in analogy to the proof of Theorem~\ref{thm:ccg-correct}.

\begin{algorithm}
  \begin{algorithmic}[1]
    \REQUIRE An instance of Problem~\eqref{eq:recoverable-robust-problem},
    exact solution methods for Problems~\eqref{eq:subset-compact-ref}
    and~\eqref{eq:adversarial-prob:compact}
    \ENSURE An optimal solution to
    Problem~\eqref{eq:recoverable-robust-problem}
    \STATE Set~$L \gets - \infty$, $U \gets + \infty$,
    and~$\newUncertaintySet^0 \gets \set{0}$.
    \FOR{$j = 0,1,\ldots$}
    \STATE Compute an optimal solution~$(x^j,\eta^j,y^j)$ to
    Problem~\eqref{eq:subset-compact-ref} with the
    set~$\mathcal{\newUncertaintySet}^j$.
    \label{alg:ccg:compact-ref:solve-master}
    \STATE Set~$L \gets \initCost^\top x^j + \eta^j$.
    \IF{$L \geq U$}
    \RETURN $x^j$
    \ENDIF
    \FOR{$\nu \in \newUncertaintySet$}    
    \label{alg:ccg:compact-ref:for-loop}
    \STATE Solve the~$(x^j,\nu)$-parameterized
    problem~\eqref{eq:adversarial-prob:compact} and let~$R(x^j,\nu)$ denote its
    optimal objective function value.
    \label{alg:ccg:compact-ref:solve-subprob}
    \ENDFOR
    \STATE Set~$\bar \nu \gets \argmax_{\nu \in \newUncertaintySet}
    \set{R(x^j,\nu)}$ and~$U \gets \min \set{U,\, \initCost^\top x^j +
      R(x^j,\bar \nu)}$.
    \label{alg:ccg:compact-ref:update-ub}
    \IF{$L \geq U$}
    \RETURN $x^j$
    \ENDIF
    \IF{$\eta^j < R(x^j,\bar \nu)$}
    \STATE Set~$\newUncertaintySet^{j+1} \gets \newUncertaintySet^j
    \cup \set{\bar \nu}$ and~$j \gets j+1$.
    \label{alg:ccg:compact-ref:add-scenario}
    \ENDIF
    \ENDFOR
  \end{algorithmic}
  \caption{CCG for the Compact Formulation}
  \label{alg:ccg:compact-ref}
\end{algorithm}

\subsubsection{Extended Formulation}
\label{sec:ccg:extended-formulation}

Similar to the compact formulation, the extended
formulation~\eqref{eq:extended-formulation} is based on the projected
uncertainty set~$\newUncertaintySet$ rather than the scenario set~$\scenarios$.
The CCG framework from Algorithm~\ref{alg:ccg:compact-ref} can also be applied
to the extended formulation, with the only difference being the (relaxed)
master problem considered in Line~\ref{alg:ccg:compact-ref:solve-master} of the
method. For iteration~$j$ with~$\newUncertaintySet^j \subseteq
\newUncertaintySet$, this problem is given by
\begin{equation*}
  \begin{split}
    \min_{x,\eta,w,z} \quad & \initCost^\top x + \eta
    \\
    \st \quad \ & \eta \geq \sum_{i=1}^n \nomCostFun{i}{\nu} x_i
                  + \Gamma w^\nu +
                  \sum_{i=1}^n z^\nu_i - k\nu,
                  \quad \nu \in \newUncertaintySet^j,
    \\
                            & w^\nu + z^\nu_i \geq \devCostFun{i}{\nu}
                              x_i,
                              \quad i \in N,\,
                              \nu \in \newUncertaintySet^j,
    \\
                            & x \in X,\, \eta \in \R_{\geq 0},\, w^\nu
                              \in \R_{\geq 0},\, z^\nu \in \R^n_{\geq
                              0},
                              \quad \nu \in \newUncertaintySet^j.
  \end{split}
\end{equation*}

\subsection{Branch-and-Cut}
\label{sec:bnc}

To solve the projection-based, compact, and extended formulations of
Problem~\eqref{eq:recoverable-robust-problem}, we now present branch-and-cut
frameworks that are similar to (generalized) Benders decomposition
\parencite{Benders:1962,Geoffrion:1972}.
In these methods, we only add constraints associated with violated scenarios
and keep the sets of variables fixed.
This is in contrast to the CCG methods discussed earlier, in which both
variables and constraints are added iteratively.
We first present the branch-and-cut framework for the projection-based
formulation and then discuss what needs to be adapted to tackle the extended
and the compact formulations.

\subsubsection{Projection-Based Formulation}

In the projection-based formulation~\eqref{eq:proj-ref},
we only add one additional continuous variable (the variable~$\eta$ used for
the epigraph reformulation) for modeling the uncertainties and the recovery
actions. Hence, we can tackle Problem~\eqref{eq:proj-ref} using branch-and-cut.
We start by solving the linear problem
\begin{equation}
  \label{eq:root-node-problem}
  \min_{x,\eta} \quad  \initCost^\top x + \eta
  \quad \st \quad (x,\eta) \in \Omega_0 \define \Set{(\bar x, \bar \eta) \in
    \bar X \times \R_{\geq 0}},
\end{equation}
where~$\bar{X} \subseteq \R^n$ is a continuous relaxation of~$X$,
\ie, the integer points contained in~$\bar{X}$ coincide with~$X$.
After solving Problem~\eqref{eq:root-node-problem}, we iteratively augment the
set~$\Omega_0$ to ensure integer feasibility and to approximate the worst-case
recovery costs until a solution~$(x^*,\eta^*)$ to
Problem~\eqref{eq:root-node-problem} is also feasible for
Problem~\eqref{eq:proj-ref}.
At node~$j$ of the branch-and-cut search tree, we consider the problem
\begin{equation}
  \label{eq:j-node-problem}
  \min_{x,\eta} \quad  \initCost^\top x + \eta
  \quad \st \quad (x,\eta) \in \Omega_j \subseteq \R^n \times \R_{\geq 0},
\end{equation}
where the set~$\Omega_j$ is obtained from~$\Omega_0$ by adding all valid
inequalities that have been generated along the path from the root to
node~$j$---both to include new scenarios for the uncertainty and to
separate fractional solutions---together with all branching decisions made
along that path.
We state the method for processing node~$j$ of the branch-and-cut search tree
in Algorithm~\ref{alg:node-processing:projection}.

\begin{algorithm}
  \begin{algorithmic}[1]
    \REQUIRE Exact solution methods for
    Problems~\eqref{eq:adversarial-prob:compact} and~\eqref{eq:j-node-problem},
    an upper bound~$U$
    \ENSURE An indication of whether node~$j$ is fathomed or whether two new
    sub-problems are generated due to branching
    \STATE Set~$\texttt{resolve} \gets \textsf{False}$.
    \label{alg:node-processing:projection:step-1}
    \STATE Solve Problem~\eqref{eq:j-node-problem}.
    \label{alg:node-processing:projection:solve-master}
    \IF{Problem~\eqref{eq:j-node-problem} is infeasible}
    \STATE Fathom the current node, i.e., go back to the main method.
    \label{alg:node-processing:projection:fathom-1}
    \ENDIF
    \STATE Let~$(x^j,\eta^j)$ denote the solution to
    Problem~\eqref{eq:j-node-problem}.
    \IF{$\initCost^\top x^j + \eta^j \geq U$}
    \STATE Fathom the current node, i.e., go back to the main method.
    \label{alg:node-processing:projection:fathom-2}
    \ENDIF
    \IF{$x^j \notin X$}
    \label{alg:node-processing:projection:if}
    \STATE Either generate cuts valid for $\Omega_j \cap (X
    \times \R_{\geq 0})$, augment~$\Omega_j$, and go to
    Step~\ref{alg:node-processing:projection:solve-master}, or branch and go
    back to the main method.
    \label{alg:node-processing:projection:integer-infeas}
    \ENDIF
    \FOR{$\nu \in \newUncertaintySet$}
    \STATE Compute an optimal solution~$u^j$ to
    the~$(x^j,\nu)$-parameterized problem~\eqref{eq:adversarial-prob:compact}
    and let~$R(x^j,\nu)$ denote its optimal objective function value.
    \label{alg:node-processing:projection:solve-subprob}
    \IF{$\eta^j < R(x^j,\nu)$}
    \STATE Set
    \begin{equation*}
      \Omega_j \gets \Omega_j \cap \Defset{(x,\eta)}{%
        \eta \geq \sum_{i=1}^n \left( \nomCostFun{i}{\nu} +
        \devCostFun{i}{\nu} u^j_i \right) x_i - k\nu}
    \end{equation*}
    and~$\texttt{resolve} \gets \textsf{True}$.
    \label{alg:node-processing:projection:add-cut}
    \ENDIF
    \ENDFOR
    \IF{\texttt{resolve}}
    \STATE Go to Step~\ref{alg:node-processing:projection:step-1}.
    \ENDIF
    \STATE Set~$U \gets \initCost^\top x^j + \eta^j$, update the incumbent
    solution, and fathom the current node, i.e., go back to the main method.
    \label{alg:node-processing:projection:fathom-3}
  \end{algorithmic}
  \caption{Processing Node~$j$ Using the Projection-Based Formulation}
  \label{alg:node-processing:projection}
\end{algorithm}

If Problem~\eqref{eq:j-node-problem} is infeasible or if its optimal objective
function value exceeds the current upper bound~$U$, we fathom node~$j$;
see Lines~\ref{alg:node-processing:projection:fathom-1}
and~\ref{alg:node-processing:projection:fathom-2} in
Algorithm~\ref{alg:node-processing:projection}.
Otherwise, we do the following.
First, we check if the variables~$x^j$ satisfy the integrality
constraints, \ie, we check if~$x^j \in X \subseteq \set{0,1}^n$ holds.
In the case of fractional solutions, we can use standard cutting
planes from mixed-integer linear optimization, see, \eg,
\textcite{Cornuejols:2008,Clautiaux_Ljubic:2024}, or branch to
separate~$(x^j,\eta^j)$; see
Line~\ref{alg:node-processing:projection:integer-infeas}.
If~$x^j \in X$ holds, we proceed by checking whether~$(x^j,\eta^j)$ satisfies
\begin{equation*}
  \eta^j \geq \sum_{i=1}^n \scenarioCostFun{i}{\nu}{s} x^j_i -
  k\nu,
  \quad \nu \in \newUncertaintySet,\, s \in \scenarios(\nu).
\end{equation*}
To this end, we solve Problem~\eqref{eq:adversarial-prob:compact} with~$x=x^j$
for all~$\nu \in \newUncertaintySet$.
Because these problems are independent of each other, we can solve them in
parallel. If
\begin{equation*}
  \eta^j
  \geq \max_{u} \Defset{\sum_{i=1}^n \left(
      \nomCostFun{i}{\nu} + \devCostFun{i}{\nu} u_i \right) x^j_i -
    k\nu}{u \in \uncertaintySet_\Gamma}
\end{equation*}
holds for all~$\nu \in \newUncertaintySet$, we update the incumbent and fathom
the current node; see Line~\ref{alg:node-processing:projection:fathom-3}.
Otherwise, we augment the set~$\Omega_j$ with a valid inequality that separates
the point~$(x^j,\eta^j)$; see
Line~\ref{alg:node-processing:projection:add-cut}.
Note that a cut is added for each~$\nu \in \newUncertaintySet$ for
which~$\eta^j < R(x^j,\nu)$ holds, i.e., up to~$\Abs{\newUncertaintySet}$ cuts
may be added at each node. However, it is also valid to consider, \eg, adding
only the most violated cut for the given~$x^j \in X$.
We present different cut separation strategies in
Section~\ref{sec:cut-strategies}.

\begin{theorem}
  \label{thm:bnc-correct}
  If we embed Algorithm~\ref{alg:node-processing:projection} into a
  classic branch-and-bound framework, we obtain a method that terminates
  with~$(x^*,\eta^*)$, where~$x^*$ is an optimal solution to the~$k$-delete
  recoverable robust problem~\eqref{eq:recoverable-robust-problem}, after
  finitely many iterations and after adding an overall finite number of cuts.
\end{theorem}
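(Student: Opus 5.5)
The plan is to reduce the statement to three ingredients: validity of every cut, finiteness of the cut family and of the tree, and correctness of the incumbent certificate. Together with Theorem~\ref{thm:proj-ref}, these give the claim.

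\textbf{Validity of cuts.} Every cut added in Line~\ref{alg:node-processing:projection:add-cut} has the form
\begin{equation*}
  \eta \geq \sum_{i=1}^n \left( \nomCostFun{i}{\nu} + \devCostFun{i}{\nu} u_i \right) x_i - k\nu
\end{equation*}
for some $\nu \in \newUncertaintySet$ and $u \in \uncertaintySet_\Gamma$. This is exactly one constraint of~\eqref{eq:proj-ref}, with $s \in \scenarios(\nu)$ given by $u^s = u$. Hence no feasible point of~\eqref{eq:proj-ref} is cut off. Integer-feasibility cuts in Line~\ref{alg:node-processing:projection:integer-infeas} are valid for $X \times \R_{\geq 0}$ by assumption, and branching partitions the feasible set.

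By induction over the tree, every feasible point of~\eqref{eq:proj-ref} lies in $\Omega_j$ for some active leaf $j$. Each node value is therefore a valid lower bound on its subtree, and fathoming in Lines~\ref{alg:node-processing:projection:fathom-1} and~\ref{alg:node-processing:projection:fathom-2} discards no strictly better solution.

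\textbf{Correctness of the incumbent.} When Line~\ref{alg:node-processing:projection:fathom-3} is reached, we have $x^j \in X$ and $\eta^j \geq R(x^j,\nu)$ for all $\nu \in \newUncertaintySet$. Since the greedy routine solves~\eqref{eq:adversarial-prob:compact} exactly, this means $(x^j,\eta^j)$ satisfies all constraints of~\eqref{eq:proj-ref}. Because $(x^j,\eta^j)$ is optimal over $\Omega_j$, which contains the feasible region of~\eqref{eq:proj-ref} restricted to this node, it is optimal within the node. Updating $U$ is thus correct.

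At termination no open nodes remain. The incumbent therefore has the minimal objective value of~\eqref{eq:proj-ref}. By Theorem~\ref{thm:proj-ref}, via Proposition~\ref{prop:proj-ref}, its $x^*$ solves~\eqref{eq:recoverable-robust-problem}.

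\textbf{Finiteness.} This is the step I expect to be the main obstacle. The set of possible optimality cuts is finite, since there are at most $\Abs{\newUncertaintySet}\cdot M$ distinct inequalities. At a given node, a cut is added only if it is violated by the current $(x^j,\eta^j)$, so the same cut is never added twice along one root-to-leaf path. Hence the resolve loop at any node runs finitely often.

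Branching on binary $x$ yields a tree of depth at most $n$, so there are finitely many nodes. One point needs care: the number of integer-feasibility cuts per node must be finite. I would handle this by assuming, as standard, that the MILP cut routine adds finitely many cuts before branching. Under that assumption, in each leaf where $x$ is fixed, at most finitely many optimality cuts are ever added, so the total number of cuts is finite.

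Finally, at a leaf where all of $x$ is fixed to some $\hat x \in X$, the loop either fathoms by bound or reaches Line~\ref{alg:node-processing:projection:fathom-3}. The latter occurs once enough cuts are present that $\eta^j = \max_\nu R(\hat x,\nu)$. Termination therefore follows.
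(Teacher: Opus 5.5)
Your proof is correct, and it follows a somewhat different and more careful route than the paper's.

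The paper's finiteness argument rests on the claim that no pair $(x,\eta)$ can occur twice during the run. It combines this with $\Abs{X},\Abs{\newUncertaintySet}<\infty$, so that, in effect, optimality cuts are indexed by pairs $(x,\nu)$. Correctness is only asserted, with ``an optimal solution cannot be overlooked.''

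You instead bound the whole pool of possible optimality cuts by the constraint family of~\eqref{eq:proj-ref}, of size at most $\Abs{\newUncertaintySet}M$. You note that a cut enters only when violated, hence at most once per root-to-leaf path, and combine this with the depth-$n$ bound for branching on binary $x$. This gains you three things:
\begin{enumerate}
\item You do not need non-repetition of $(x,\eta)$. That fact alone does not bound the number of LP solves, since $\eta$ and fractional $x$ range over a continuum.
\item Your count works for any violated cut from the family, not only the worst case for $(x^j,\nu)$.
\item You prove correctness explicitly. Each optimality cut is a constraint of~\eqref{eq:proj-ref}, and reaching Line~\ref{alg:node-processing:projection:fathom-3} certifies feasibility and optimality within the node.
\end{enumerate}
You also state openly that only finitely many integrality cuts may be generated before branching, an assumption the paper leaves implicit. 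The paper's $(x,\nu)$ count, in turn, gives a bound that does not depend on $M$.

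One small correction: Line~\ref{alg:node-processing:projection:integer-infeas} only requires cuts valid for $\Omega_j \cap (X \times \R_{\geq 0})$, i.e., locally valid cuts, not cuts valid for all of $X \times \R_{\geq 0}$. Your induction still works under this weaker requirement, because every feasible point of~\eqref{eq:proj-ref} belonging to node~$j$ lies in that set.
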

\begin{proof}
  Finite termination follows from the finiteness of the number of feasible
  first-stage decisions~$x \in X \subseteq \set{0,1}^n$,
  the finiteness of the branch-and-cut method used to solve the separation
  problem~\eqref{eq:adversarial-prob:compact},
  and from the fact that a pair~$(x,\eta)$ cannot occur twice during the
  execution of the method.
  We show the latter by contradiction. To this end, suppose that there exist
  solutions~$(x^{j},\eta^{j}) = (x^{l},\eta^{l})$ at nodes~$j$ and~$l$
  with~$j < l$. Then, by Line~\ref{alg:node-processing:projection:add-cut} of
  Algorithm~\ref{alg:node-processing:projection}, we have
  \begin{equation*}
    \eta^{j} = \eta^{l}
    \geq \sum_{i=1}^n \scenarioCostFun{i}{\nu}{s} x^{j}_i - k\nu,
    \quad \nu \in \newUncertaintySet,\, s \in \scenarios(\nu),
  \end{equation*}
  \ie, the termination criterion was already satisfied at node~$j$.
  Hence, an optimal solution cannot be overlooked.
  Finally, the number of cuts possibly added to the problem formulation is
  finite because~$\Abs{X},\Abs{\newUncertaintySet} < \infty$.
\end{proof}

Next, we discuss how to adapt the node processing procedure in
Algorithm~\ref{alg:node-processing:projection} so that it can be applied to the
compact formulation of Problem~\eqref{eq:recoverable-robust-problem}.

\subsubsection{Compact Formulation}

In the compact formulation~\eqref{eq:compact-ref}, the number of variables
needed to represent uncertainties and recovery actions is
in~$O(n^2)$; cf.\ Table~\ref{tab:comp-refs}. Hence, we can keep the full set of
variables in the master problem and generate the associated constraints on the
fly.
To process nodes using the compact reformulation~\eqref{eq:compact-ref}, we
thus make the following modifications to
Algorithm~\ref{alg:node-processing:projection}:
\begin{enumerate}
\item Instead of Problem~\eqref{eq:j-node-problem}, the problem considered at
  node~$j$ is
  \begin{equation*}
    \min_{x,\eta,y} \quad  \initCost^\top x + \eta
    \quad \st \quad (x,\eta,y) \in \Omega_j \subseteq \R^n \times \R_{\geq 0}
    \times [0,1]^{\Abs{\mathcal{L}} \times \Abs{\newUncertaintySet}},
  \end{equation*}
  i.e., we now operate in the~$(x,\eta,y)$-space. The set~$\Omega_j$ is
  obtained from
  \begin{equation*}
    \Omega_0 \define \Defset{(x,\eta,y) \in \bar X \times \R_{\geq 0} \times
      [0,1]^{\Abs{\mathcal{L}} \times \Abs{\newUncertaintySet}}}{%
      \sum_{\ell \in \mathcal{L}} y^\nu_\ell = 1,\, \nu \in \newUncertaintySet}
  \end{equation*}
  by adding all valid inequalities that have been generated along the path from
  the root to node~$j$ as well as all branching decisions made along that path.
\item In Line~\ref{alg:node-processing:projection:if} of
  Algorithm~\ref{alg:node-processing:projection}, we additionally check
  whether the~$y^j$ component of a solution~$(x^j,\eta^j,y^j)$ satisfies all
  integrality constraints. As before, fractional solutions are handled by
  either cutting or branching.
\item If a solution~$(x^j,\eta^j,y^j)$ to the problem at node~$j$
  satisfies~$\eta^j < R(x^j,\nu)$ for
  some~$\nu \in \newUncertaintySet$, we augment the
  set~$\Omega_j$ with~$\Abs{\mathcal{L}}$ inequality constraints, namely
  \begin{equation*}
    \eta \geq \kappa_\ell(\nu)
    + \sum_{i=1}^n \compCostFun{\sigma_\nu(i)}{\nu}{\ell} x_{\sigma_\nu(i)}
    - M(\nu)(1 - y^\nu_\ell),\quad \ell \in \mathcal{L},
  \end{equation*}
  in Line~\ref{alg:node-processing:projection:add-cut} of
  Algorithm~\ref{alg:node-processing:projection}.
\end{enumerate}
Finite termination of the overall branch-and-cut method in which this
modified node processing scheme is embedded can be shown in analogy to the
proof of Theorem~\ref{thm:bnc-correct}.

\enlargethispage{1cm}

\subsubsection{Extended Formulation}

The setting for the extended formulation~\eqref{eq:extended-formulation} is
very similar to that of the compact formulation, with the main difference
being the considered variable space. To process node~$j$ of the branch-and-cut
search tree using the extended formulation, we replace
Problem~\eqref{eq:j-node-problem} in
Algorithm~\ref{alg:node-processing:projection} with
\begin{equation*}
  \min_{x,\eta,w,z} \quad  \initCost^\top x + \eta
  \quad \st \quad (x,\eta,w,z) \in \Omega_j \subseteq \R^n \times \R_{\geq 0}
  \times \R_{\geq 0} \times \R^{n \times \Abs{\newUncertaintySet}}_{\geq 0},
\end{equation*}
where the set~$\Omega_j$ is obtained from
\begin{equation*}
  \Omega_0 \define \Set{(x,\eta,w,z) \in \bar X \times \R_{\geq 0} \times
    \R_{\geq 0} \times \R^{n \times \Abs{\newUncertaintySet}}_{\geq 0}}
\end{equation*}
by adding all valid inequalities that have been generated along the path from
the root to node~$j$ as well as all branching decisions made along that path.
If a solution~$(x^j,\eta^j,w^j,z^j)$ to the problem at node~$j$
satisfies~$\eta^j < R(x^j,\nu^*)$ for some $\nu^* \in \newUncertaintySet$, we
augment the set~$\Omega_j$ with~$n+1$ inequality constraints, namely
\begin{align*}
  \eta \geq \sum_{i=1}^n \nomCostFun{i}{\nu^*} x_i
  + \Gamma w^{\nu^*} + \sum_{i=1}^n z^{\nu^*}_i - k\nu^*, \quad
  w^{\nu^*} + z^{\nu^*}_i \geq \devCostFun{i}{\nu^*}
  x_i, \quad i \in N,
\end{align*}
in Line~\ref{alg:node-processing:projection:add-cut} of
Algorithm~\ref{alg:node-processing:projection}.


\section{Computational Results}
\label{sec:computational-results}

We now computationally assess and compare the performance of the
exact solution methods presented in Section~\ref{sec:solution-approaches}.
To this end, we brief\/ly describe the generation of the test instances and the
computational setup in Sections~\ref{sec:instances} and~\ref{sec:setup},
respectively.
%
In Sections~\ref{sec:comp-results:milp-bnc-ccg}
and~\ref{sec:comp-results:comparison}, we compare the solution methods
presented in this paper with respect to (i)~runtimes,
(ii)~the number of instances solved to global optimality, and
(iii)~optimality gaps.
Note that, to the best of our knowledge, there are currently no other methods
in the literature that tackle~$k$-delete recoverable robust~$0$--$1$
problems under budgeted uncertainty. Hence, there are no alternative methods
that we could compare with.
In addition to computational metrics, we discuss qualitative aspects 
of solutions in Section~\ref{sec:gain-n-price}. To this end, we consider the
\emph{gain of recovery}, i.e., the decrease in the optimal objective value
obtained by allowing for recovery actions.
Moreover, we elaborate on the benefits of using a robust model by
analyzing the reduction in the worst-case objective value achieved when
considering robust rather than nominal solutions.
Our main findings are summarized in
Figures~\ref{fig:projected-cuts}--\ref{fig:price-robust} and
Table~\ref{tab:comparison}. We provide supplementary results in
Appendix~\ref{sec:appendix-comp-results}.

\subsection{Generation of Test Instances}
\label{sec:instances}

In our computational study, we consider instances of two well-known
combinatorial optimization problems, both of which capture many
practical applications: the assignment problem and the single-source
capacitated facility location problem. Comprehensive overviews of these problem
classes can, e.g., be found in \textcite{Burkard_et_al:2012} and
\textcite{Saldanha-da-Gama_Wang:2024}.
The latter particularly addresses facility location problems under uncertainty.
In what follows, we describe the problem setting for both classes and explain
how we generate the corresponding data.

\subsubsection{Assignment Problem}

The assignment problem (AP) seeks to assign $n$~agents, i.e., staff,
machines, or resources, to~$n$ tasks so that each agent is assigned to exactly
one task and each task is assigned to exactly one agent.
Assigning agent~$i$ to task~$j$ incurs a cost of~$\cost_{ij} \in \R_{\geq 0}$,
and the objective is to determine an assignment that minimizes the total cost.
In its deterministic form, the assignment problem reads
\begin{align*}
  \min_x \quad
  & \sum_{i=1}^n \sum_{j=1}^n \cost_{ij}x_{ij}
  \\
  \st \quad
  & \sum_{i=1}^n x_{ij} = 1, \quad j \in [n],
  \\
  & \sum_{j=1}^n x_{ij} = 1, \quad i \in [n],
  \\
  & x_{ij} \in \Set{0,1}, \quad i,j \in [n].
\end{align*}
We generate $40$~deterministic instances of the assignment problem as follows.
Starting from the \textsf{assign800} instance of the \textsf{OR-Library}
\parencite{Beasley:1990,ORLIB:1990}, we randomly select a subset of tasks until
the desired number~$n$ is reached.
For each~$n \in \set{25, 50, 75, 100}$, we generate~$10$~instances.
To adapt these instances to a recoverable robust setup, we use a procedure
similar to that in \textcite{Buesing_et_al:2011a}, which works as follows.
For all~$i,j \in [n]$, we set the first-stage cost
to~$\initCost_{ij} = \lceil 0.6\cost_{ij} \rceil$ and the nominal cost
to~$\nomCost_{ij} = \lceil 0.2\cost_{ij} \rceil$.
The worst-case cost increase~$\devCost_{ij}$ is obtained by generating a
uniformly distributed random value~$\delta_{ij} \in [0.2, 0.4)$ and
setting~$\devCost_{ij} = \lceil \delta_{ij}\cost_{ij} \rceil$.
The robustness parameter is set to~$\Gamma = \lceil \gamma n \rceil$
with~$\gamma \in \set{0.1, 0.25, 0.5}$ and the recovery parameter
is set to~$k = \lceil \kappa n \rceil$ with~$\kappa \in \set{0.1, 0.25}$.
To sum up, we consider~$240$~instances of the~$k$-delete recoverable robust
assignment problem.

\subsubsection{Single-Source Capacitated Facility Location Problem}

Let~$I$ and~$J$ be given sets of customers and potential facility locations,
respectively. Each customer~$i \in I$ has a demand~$d_i \in \R_{\geq 0}$ that
needs to be served and each facility~$j \in J$ is associated with an opening
cost of~$b_j \in \R_{\geq}$ and a capacity of~$s_j \in \R_{\geq 0}$. Assigning
customer~$i$ to facility~$j$ incurs a cost of~$\cost_{ij} \in \R_{\geq 0}$.
Each customer must be assigned to one facility, which supplies its
entire demand, and the total demand served by a facility cannot
exceed its capacity. The single-source capacitated facility location problem
(SSCFLP) seeks to determine which facilities to open and how to assign
customers so that the total cost of opening facilities and serving
customers is minimized. The deterministic SSCFLP is given by
\begin{align*}
  \min_{x,y} \quad
  & \sum_{j \in J} b_jy_j + \sum_{i \in I} \sum_{j \in J} \cost_{ij}x_{ij}
  \\
  \st \quad
  & \sum_{j \in J} x_{ij} = 1, \quad i \in I,
  \\
  & x_{ij} \leq y_j, \quad i \in I,\, j \in J,
  \\
  & \sum_{i \in I} d_ix_{ij} \leq s_jy_j, \quad j \in J,
  \\
  & x_{ij}, y_j \in \Set{0,1}, \quad i \in I,\, j \in J.
\end{align*}
We consider the~$37$ deterministic facility location problem instances from the
\textsf{OR-Library} \parencite{Beasley:1990,ORLIB:1990} labeled
\textsf{cap41}--\textsf{cap134}, for which a summary is given in
Table~\ref{tab:SSCFLP-instances}.
\begin{table}
  \centering
  \caption{The number of customers (``$\Abs{I}$''), potential
    facility locations (``$\Abs{J}$''), and instances (``\# instances'') for
    \textsf{cap41}--\textsf{cap134}.}
  \begin{tabular}{lrrr}
    \toprule
    name & $\Abs{I}$ & $\Abs{J}$ & \# instances \\
    \midrule
    \textsf{cap41}--\textsf{cap74} & 50 & 16 & 13 \\
    \textsf{cap81}--\textsf{cap104} & 50 & 25 & 12 \\
    \textsf{cap121}--\textsf{cap134} & 50 & 50 & 12 \\
    \bottomrule
  \end{tabular}
  \label{tab:SSCFLP-instances}
\end{table}
Because these instances correspond to the (multi-source) capacitated facility
location problem, we adapted them to the single-source setting by requiring
that each customer is served by exactly one facility. To ensure feasibility, we
remove all customers whose demand exceeds the maximum capacity, i.e.,
we remove all~$i$ from~$I$ for which~$d_i > \max \defset{s_j}{j \in J}$
holds. We assume that only the assignment costs~$\cost$ are uncertain, whereas
the facility opening costs~$b$ remain deterministic.
The robustness parameter is set to~$\Gamma = \lceil \gamma \Abs{I} \rceil$
with~$\gamma \in \set{0.1, 0.25, 0.5}$ and the recovery parameter
is set to~$k = \lceil \kappa \Abs{I} \rceil$
with~$\kappa \in \set{0.1, 0.25}$.
All remaining data, i.e., $\initCost$, $\nomCost$, and~$\devCost$, is generated
in the same way as for the instances of the assignment problem. Overall, we
consider~$222$~instances of the~$k$-delete recoverable robust SSCFLP.

\subsection{Computational Setup}
\label{sec:setup}

All tests were carried out on an Intel XEON 8468 Sapphire at
\SI{2.1}{\giga\hertz} ($8$~cores) and \SI{32}{\giga\byte} RAM, which is
part of the High-Performance Computing cluster ``CLAIX'' at RWTH Aachen
University.
We implemented our solution approaches in \textsf{Python}~$3.12.3$ and use
\textsf{Gurobi}~12.0.0 to solve all arising optimization problems.
The code, along with the instance data used in our computational study,
is publicly available at
\url{https://github.com/YasmineBeck/k-delete-recoverable-robust-methods}.
A time limit of~\SI{1}{\hour} was set for solving each instance.
For the branch-and-cut methods, we add valid inequalities using
\textsf{Gurobi}'s lazy constraint callback, which requires setting the
parameter \textsf{LazyConstraints} to~$1$.
All other parameters were kept at their default settings.
We now elaborate on the scenario initialization and the full evaluation
frequency for the CCG methods as well as on the cut separation strategies used
in our branch-and-cut~approaches.

\subsubsection{Initializing the Scenario Set}
\label{sec:init-strategies}

For initializing the column-and-constraint-generation methods, we need to
specify the scenario sets~$\scenarios^0$ and~$\newUncertaintySet^0$ in
Algorithms~\ref{alg:ccg} and~\ref{alg:ccg:compact-ref}, respectively.
In preliminary computational experiments, we considered several ways to
initialize these sets as singletons, including greedy and randomized
approaches. These tests revealed that the impact of such scenario
initialization strategies is marginal.
In what follows, we thus only report the results for the setting in which the
CCG methods are initialized using the nominal scenario, i.e.,
$\scenarios^0 \define \defset{s=1}{\scenarioCost{s} = \nomCost}$
and~$\newUncertaintySet^0 \define \set{0}$.


\subsubsection{Full Evaluation Frequency}
\label{sec:full_eval}

As discussed in Section~\ref{sec:ccg:compact-formulation}, we aim for a
trade-off between the frequency at which we update the upper bound and the
computational effort of the CCG method by adjusting the full evaluation
frequency, i.e., the frequency at which we solve
Problem~\eqref{eq:adversarial-prob:compact} for
all~$\nu \in \newUncertaintySet$ in Algorithm~\ref{alg:ccg:compact-ref}.
In preliminary computational experiments, we compared two strategies:
performing a full evaluation (i)~in every iteration or (ii)~only
every~$10$th iteration. In the latter case, we terminate the for-loop in
Line~\ref{alg:ccg:compact-ref:for-loop} of Algorithm~\ref{alg:ccg:compact-ref}
as soon as a~$\nu \in \newUncertaintySet$ with~$\eta^j < R(x^j,\nu)$ is found.
Our tests revealed that the most suitable strategy strongly depends on the
problem at hand.
For AP, we observe that performing a full evaluation only
every~$10$th iteration leads to considerably shorter runtimes, whereas for
SSCFLP it seems beneficial to evaluate all~$\nu \in \newUncertaintySet$ in
every iteration.
In particular, this suggests that more involved problem classes (such as
SSCFLP compared to AP) tend to benefit from evaluating
all~$\nu \in \newUncertaintySet$ and potentially updating the upper bound more
frequently. However, if solving the master problem is computationally rather
cheap, this seems less advantageous.
In what follows, we thus perform a full evaluation in every iteration for
SSCFLP and only every~$10$th iteration for AP.

\subsubsection{Cut Separation Strategies}
\label{sec:cut-strategies}

As discussed in Section~\ref{sec:bnc}, cuts associated with multiple
uncertainty realizations~$\nu \in \newUncertaintySet$ may be added at each
node of the branch-and-cut search tree.
Nevertheless, different cut separation strategies can be used to reduce the
number of added cuts and, thus, potentially speed up the solution process.
We consider four such strategies.
\begin{description}[leftmargin=0em]
\item[\textsf{All-In}] Add the cut(s) for all~$\nu \in \newUncertaintySet$ whose
  constraints are violated by the solution to the current node problem.
\item[\textsf{First-In}] Iterate over~$\nu \in \newUncertaintySet$, add the
  cut(s) for the first uncertainty realization whose constraints are violated
  by the solution to the current node problem, and break the loop.
\item[\textsf{Shuffle-First-In}] Randomly shuffle the elements
  in~$\newUncertaintySet$ and apply \textsf{First-In}.
\item[\textsf{Max-Violation}] Add the cut(s) only for the uncertainty
  realization~$\nu \in \newUncertaintySet$ whose constraints are maximally
  violated by the solution to the current node problem.
\end{description}

In Figure~\ref{fig:projected-cuts}, we show empirical cumulative
distribution functions (ECDFs) of the runtimes and the optimality gaps
to compare the four considered cut separation strategies.
The ECDFs can be interpreted as the percentage of instances ($y$-axis) that can
be solved within a given time or with a given optimality gap ($x$-axis).
Here, we exemplarily focus on the branch-and-cut methods applied to the
projection-based formulation, but preliminary computational results revealed
that the same qualitative observations can also be made for the approaches
applied to the extended and the compact formulation.

\begin{figure}
  \centering
\begin{tikzpicture}[scale=0.75, baseline={(0,0)}]
  \begin{semilogxaxis}[
    xlabel = {Runtime (\si{\second})},
    xticklabels = {0.1,1,10,100,1000},
    y tick label style={
      /pgf/number format/.cd,
      fixed,
      fixed zerofill,
      precision=1,
      /tikz/.cd
    },
    xtick pos = bottom,
    ytick pos = left,
    xmin = 0.5668144226074219,
    xmax = 3579.0349221229553,
    ymin = 0,
    ymax = 1,
    legend pos=south east,
    legend cell align={left},
    xmajorgrids=true,
    ymajorgrids=true,
    grid style=dotted
    ]
    \addplot[my-red,very thick] table [x=data, y=percent, col sep=comma] {csv-files/assign/projected-runtime-0.csv};
    \addlegendentry{\textsf{All-In}}
    \addplot[my-blue,very thick,densely dashed] table [x=data, y=percent, col sep=comma] {csv-files/assign/projected-runtime-1.csv};
    \addlegendentry{\textsf{First-In}}
    \addplot[my-yellow,very thick,dashed] table [x=data, y=percent, col sep=comma] {csv-files/assign/projected-runtime-2.csv};
    \addlegendentry{\textsf{Shuffle-First-In}}
    \addplot[my-green,very thick,densely dotted] table [x=data, y=percent, col sep=comma] {csv-files/assign/projected-runtime-3.csv};
    \addlegendentry{\textsf{Max-Violation}}
  \end{semilogxaxis}
\end{tikzpicture}
\quad
\begin{tikzpicture}[scale=0.75, baseline={(0,0)}]
  \begin{axis}[
    xlabel = {Optimality Gap (in \si{\percent})},
    xticklabel = {\pgfmathparse{\tick*100}\pgfmathprintnumber{\pgfmathresult}},
    y tick label style={
      /pgf/number format/.cd,
      fixed,
      fixed zerofill,
      precision=1,
      /tikz/.cd
    },
    xtick pos = bottom,
    ytick pos = left,
    xmin = 0,
    xmax = 0.1701120071987725,
    ymin = 0,
    ymax = 1,
    legend pos=south east,
    legend cell align={left},
    xmajorgrids=true,
    ymajorgrids=true,
    grid style=dotted
    ]
    \addplot[my-red,very thick] table [x=data, y=percent, col sep=comma] {csv-files/assign/projected-gap-0.csv};
    \addlegendentry{\textsf{All-In}}
    \addplot[my-blue,very thick,densely dashed] table [x=data, y=percent, col sep=comma] {csv-files/assign/projected-gap-1.csv};
    \addlegendentry{\textsf{First-In}}
    \addplot[my-yellow,very thick,dashed] table [x=data, y=percent, col sep=comma] {csv-files/assign/projected-gap-2.csv};
    \addlegendentry{\textsf{Shuffle-First-In}}
    \addplot[my-green,very thick,densely dotted] table [x=data, y=percent, col sep=comma] {csv-files/assign/projected-gap-3.csv};
    \addlegendentry{\textsf{Max-Violation}}
  \end{axis}
\end{tikzpicture}
\qquad
\begin{tikzpicture}[scale=0.75, baseline={(0,0)}]
  \begin{semilogxaxis}[
    xlabel = {Runtime (\si{\second})},
    xticklabels = {0.1,1,10,100,1000},
    y tick label style={
      /pgf/number format/.cd,
      fixed,
      fixed zerofill,
      precision=1,
      /tikz/.cd
    },
    xtick pos = bottom,
    ytick pos = left,
    xmin = 0.7474339008331299,
    xmax = 3539.8522148132324,
    ymin = 0,
    ymax = 1,
    legend pos=north west,
    legend cell align={left},
    xmajorgrids=true,
    ymajorgrids=true,
    grid style=dotted
    ]
    \addplot[my-red,very thick] table [x=data, y=percent, col sep=comma] {csv-files/cap/projected-runtime-0.csv};
    \addlegendentry{\textsf{All-In}}
    \addplot[my-blue,very thick,densely dashed] table [x=data, y=percent, col sep=comma] {csv-files/cap/projected-runtime-1.csv};
    \addlegendentry{\textsf{First-In}}
    \addplot[my-yellow,very thick,dashed] table [x=data, y=percent, col sep=comma] {csv-files/cap/projected-runtime-2.csv};
    \addlegendentry{\textsf{Shuffle-First-In}}
    \addplot[my-green,very thick,densely dotted] table [x=data, y=percent, col sep=comma] {csv-files/cap/projected-runtime-3.csv};
    \addlegendentry{\textsf{Max-Violation}}
  \end{semilogxaxis}
\end{tikzpicture}
\quad
\begin{tikzpicture}[scale=0.75, baseline={(0,0)}]
  \begin{axis}[
    xlabel = {Optimality Gap (in \si{\percent})},
    xticklabel = {\pgfmathparse{\tick*100}\pgfmathprintnumber{\pgfmathresult}},
    y tick label style={
      /pgf/number format/.cd,
      fixed,
      fixed zerofill,
      precision=1,
      /tikz/.cd
    },
    xtick pos = bottom,
    ytick pos = left,
    xmin = 0,
    xmax = 0.5544816398832612,
    ymin = 0,
    ymax = 1,
    legend pos=south east,
    legend cell align={left},
    xmajorgrids=true,
    ymajorgrids=true,
    grid style=dotted
    ]
    \addplot[my-red,very thick] table [x=data, y=percent, col sep=comma] {csv-files/cap/projected-gap-0.csv};
    \addlegendentry{\textsf{All-In}}
    \addplot[my-blue,very thick,densely dashed] table [x=data, y=percent, col sep=comma] {csv-files/cap/projected-gap-1.csv};
    \addlegendentry{\textsf{First-In}}
    \addplot[my-yellow,very thick,dashed] table [x=data, y=percent, col sep=comma] {csv-files/cap/projected-gap-2.csv};
    \addlegendentry{\textsf{Shuffle-First-In}}
    \addplot[my-green,very thick,densely dotted] table [x=data, y=percent, col sep=comma] {csv-files/cap/projected-gap-3.csv};
    \addlegendentry{\textsf{Max-Violation}}
  \end{axis}
\end{tikzpicture}
  \caption{Log-scaled ECDFs of the runtimes (in \si{\second}) and
    linear-scaled ECDFs of the optimality gaps (in \si{\percent}) for the
    branch-and-cut approaches with different cut separation strategies applied
    to the projection-based formulation.
    Results for AP are shown in the top figures and results for
    SSCFLP are shown in the bottom figures.}
  \label{fig:projected-cuts}
\end{figure}

For both AP and SSCFLP, we observe that adding only one cut per node, i.e.,
using strategies \textsf{First-In}, \textsf{Shuffle-First-In}, or
\textsf{Max-Violation}, significantly improves the performance of the overall
branch-and-cut method in terms of running times and, consequently, the number
of instances solved to global optimality; see also
Table~\ref{tab:projected-cuts} in Appendix~\ref{sec:appendix-comp-results}.
Although the most effective choice among the considered strategies seems to
depend on the specific problem under consideration, we observe that
\textsf{Shuffle-First-In} consistently performs well for both the assignment
problem and the single-source capacitated facility location problem.
This can be explained by the fact that this cut separation strategy is
computationally rather cheap and that randomization helps to avoid
repeatedly adding cuts associated with the same scenario in consecutive
iterations, resulting in a more balanced exploration of the scenario set.
The previous observations align particularly well with the findings of the
computational studies in \textcite{Beck_et_al:2023,Beck:2024}, in which
bilevel problems with~$\Gamma$-robust followers are studied. As supported by the
results in \textcite{Goerigk_et_al:2025}, such robust bilevel problems exhibit a
multi-level structure similar to that of the~$k$-delete recoverable robust problem
studied in this work.
To sum up, we thus fix the cut separation strategy to \textsf{Shuffle-First-In}
for all branch-and-cut approaches in the following.

\subsection{MILP vs.\ BnC vs.\ CCG}
\label{sec:comp-results:milp-bnc-ccg}

Whereas only a single method (CCG or BnC) is available for solving the
scenario-based formulation~\eqref{eq:scenario-ref} and the projection-based
formulation~\eqref{eq:proj-ref} of
Problem~\eqref{eq:recoverable-robust-problem}, we can choose
among three methods---the MILP, the branch-and-cut, and the CCG approach---for
the extended formulation~\eqref{eq:extended-formulation} and the compact
formulation~\eqref{eq:compact-ref}; cf.\ Table~\ref{tab:approaches}.
In what follows, we compare these three approaches for both formulations.

\subsubsection{Extended Formulation~\eqref{eq:extended-formulation}}
\label{sec:comp-results:extended}

\begin{figure}
  \centering
\begin{tikzpicture}[scale=0.75, baseline={(0,0)}]
  \begin{semilogxaxis}[
    xlabel = {Runtime (\si{\second})},
    xticklabels = {0.1,1,10,100,1000},
    y tick label style={
      /pgf/number format/.cd,
      fixed,
      fixed zerofill,
      precision=1,
      /tikz/.cd
    },
    xtick pos = bottom,
    ytick pos = left,
    xmin = 0.5728592872619629,
    xmax = 3304.0190846920013,
    ymin = 0,
    ymax = 1,
    legend pos=south east,
    legend cell align={left},
    xmajorgrids=true,
    ymajorgrids=true,
    grid style=dotted
    ]
    \addplot[my-red,very thick] table [x=data, y=percent, col sep=comma] {csv-files/assign/extended-runtime-0.csv};
    \addlegendentry{{MILP}}
    \addplot[my-blue,very thick,densely dashed] table [x=data, y=percent, col sep=comma] {csv-files/assign/extended-runtime-1.csv};
    \addlegendentry{{BnC}}
    \addplot[my-yellow,very thick,dashed] table [x=data, y=percent, col sep=comma] {csv-files/assign/extended-runtime-2.csv};
    \addlegendentry{{CCG}}
  \end{semilogxaxis}
\end{tikzpicture}
\quad
\begin{tikzpicture}[scale=0.75, baseline={(0,0)}]
  \begin{axis}[
    xlabel = {Optimality Gap (in \si{\percent})},
    scaled x ticks = false,
    xticklabel = {\pgfmathparse{\tick*100}\pgfmathprintnumber{\pgfmathresult}},
    y tick label style={
      /pgf/number format/.cd,
      fixed,
      fixed zerofill,
      precision=1,
      /tikz/.cd
    },
    xtick pos = bottom,
    ytick pos = left,
    xmin = 0,
    xmax = 0.025579536370903187,
    ymin = 0,
    ymax = 1,
    legend pos=south east,
    legend cell align={left},
    xmajorgrids=true,
    ymajorgrids=true,
    grid style=dotted
    ]
    \addplot[my-red,very thick] table [x=data, y=percent, col sep=comma] {csv-files/assign/extended-gap-0.csv};
    \addlegendentry{{MILP}}
    \addplot[my-blue,very thick,densely dashed] table [x=data, y=percent, col sep=comma] {csv-files/assign/extended-gap-1.csv};
    \addlegendentry{{BnC}}
    \addplot[my-yellow,very thick,dashed] table [x=data, y=percent, col sep=comma] {csv-files/assign/extended-gap-2.csv};
    \addlegendentry{{CCG}}
  \end{axis}
\end{tikzpicture}
\qquad
\begin{tikzpicture}[scale=0.75, baseline={(0,0)}]
  \begin{semilogxaxis}[
    xlabel = {Runtime (\si{\second})},
    xticklabels = {1,10,100,1000},
    y tick label style={
      /pgf/number format/.cd,
      fixed,
      fixed zerofill,
      precision=1,
      /tikz/.cd
    },
    xtick pos = bottom,
    ytick pos = left,
    xmin = 5.202984094619751,
    xmax = 3663.0492911338806,
    ymin = 0,
    ymax = 1,
    legend pos=north west,
    legend cell align={left},
    xmajorgrids=true,
    ymajorgrids=true,
    grid style=dotted
    ]
    \addplot[my-red,very thick] table [x=data, y=percent, col sep=comma] {csv-files/cap/extended-runtime-0.csv};
    \addlegendentry{{MILP}}
    \addplot[my-blue,very thick,densely dashed] table [x=data, y=percent, col sep=comma] {csv-files/cap/extended-runtime-1.csv};
    \addlegendentry{{BnC}}
    \addplot[my-yellow,very thick,dashed] table [x=data, y=percent, col sep=comma] {csv-files/cap/extended-runtime-2.csv};
    \addlegendentry{{CCG}}
  \end{semilogxaxis}
\end{tikzpicture}
\quad
\begin{tikzpicture}[scale=0.75, baseline={(0,0)}]
  \begin{axis}[
    xlabel = {Optimality Gap (in \si{\percent})},
    scaled x ticks = false,
    xticklabel = {\pgfmathparse{\tick*100}\pgfmathprintnumber{\pgfmathresult}},
    y tick label style={
      /pgf/number format/.cd,
      fixed,
      fixed zerofill,
      precision=1,
      /tikz/.cd
    },
    xtick pos = bottom,
    ytick pos = left,
    xmin = 0,
    xmax = 0.2628029864610451,
    ymin = 0,
    ymax = 1,
    legend pos=south east,
    legend cell align={left},
    xmajorgrids=true,
    ymajorgrids=true,
    grid style=dotted
    ]
    \addplot[my-red,very thick] table [x=data, y=percent, col sep=comma] {csv-files/cap/extended-gap-0.csv};
    \addlegendentry{{MILP}}
    \addplot[my-blue,very thick,densely dashed] table [x=data, y=percent, col sep=comma] {csv-files/cap/extended-gap-1.csv};
    \addlegendentry{{BnC}}
    \addplot[my-yellow,very thick,dashed] table [x=data, y=percent, col sep=comma] {csv-files/cap/extended-gap-2.csv};
    \addlegendentry{{CCG}}
  \end{axis}
\end{tikzpicture}
  \caption{Log-scaled ECDFs of the runtimes (in \si{\second}) and
    linear-scaled ECDFs of the optimality gaps (in \si{\percent}) for the
    approaches MILP, BnC, and CCG applied to the
    extended formulation~\eqref{eq:extended-formulation}.
    Results for AP are shown in the top figures and results for
    SSCFLP are shown in the bottom figures.}
  \label{fig:extended}
\end{figure}

In Figure~\ref{fig:extended}, we show ECDFs of the runtimes and optimality gaps
for the three approaches applied to the extended formulation.
For AP, we observe that the CCG method and the MILP approach can solve
all~$240$~considered instances to global optimality within the time limit
of~\SI{1}{\hour}, whereas the BnC approach solves~$232$~instances
(\SI{96.67}{\percent}). Nevertheless, BnC finds a feasible point with finite
optimality gap for all instances, with the largest optimality
gap observed being~\SI{2.56}{\percent}. Regarding runtimes, we emphasize that
the CCG method clearly outperforms the other two approaches.
The same qualitative behavior can also be observed for the SSCFLP
instances. Again, CCG performs significantly better compared to the
branch-and-cut and the MILP approach.
As a result, more instances are solved to global optimality using CCG and, in
particular, a feasible point with finite optimality gap is found for
all~$222$~SSCFLP instances.
In contrast, the MILP approach and BnC find feasible points with
finite optimality gaps for only \SI{66.67}{\percent} and \SI{76.13}{\percent}
of these instances, respectively.
These observations are further supported by the results shown in
Table~\ref{tab:extended} in Appendix~\ref{sec:appendix-comp-results}. Overall,
applying CCG to the extended formulation thus seems to be the most effective
approach.

\subsubsection{Compact Formulation~\eqref{eq:compact-ref}}
\label{sec:comp-results:compact}

For the assignment problem, the previous observations regarding the
performance of the MILP, BnC, and CCG approaches are even more pronounced
when considering the compact formulation.
In Figure~\ref{fig:compact}, we show ECDFs of the runtimes and the optimality
gaps for the three approaches.
It can again be seen that the CCG method performs considerably
better than the other two approaches both in terms of running times and
solution quality. In particular, because CCG can solve the AP instances more
quickly, a larger number of instances can be solved to global optimality.
Overall, $238$ out of~$240$~instances (\SI{99.17}{\percent}) can be solved
using CCG within the time limit of~\SI{1}{\hour}, whereas no feasible point can
be computed for the two remaining instances.
In contrast, a feasible point with finite optimality gap is found for only
\SI{25}{\percent} and \SI{88.75}{\percent} of the instances using the MILP and
the BnC approaches, respectively.
The poor performance of the MILP approach for the compact formulation can be
explained by the fact that computing the modified cost
coefficients~$\compCostFun{i}{\nu}{\ell}$ for all~$\ell \in \mathcal{L}$,
$\nu \in \newUncertaintySet$, and~$i \in N$ is extremely expensive.
In fact, this computation consumes the majority of both runtime and
memory. Consequently, for many instances considered in this computational
study, the model cannot be built due to either time or memory constraints.

\begin{figure}
  \centering
\begin{tikzpicture}[scale=0.75, baseline={(0,0)}]
  \begin{semilogxaxis}[
    xlabel = {Runtime (\si{\second})},
    xticklabels = {1,10,100,1000},
    y tick label style={
      /pgf/number format/.cd,
      fixed,
      fixed zerofill,
      precision=1,
      /tikz/.cd
    },
    xtick pos = bottom,
    ytick pos = left,
    xmin = 2.139267921447754,
    xmax = 3518.847832918167,
    ymin = 0,
    ymax = 1,
    legend pos=north west,
    legend cell align={left},
    xmajorgrids=true,
    ymajorgrids=true,
    grid style=dotted
    ]
    \addplot[my-red,very thick] table [x=data, y=percent, col sep=comma] {csv-files/assign/compact-runtime-0.csv};
    \addlegendentry{{MILP}}
    \addplot[my-blue,very thick,densely dashed] table [x=data, y=percent, col sep=comma] {csv-files/assign/compact-runtime-1.csv};
    \addlegendentry{{BnC}}
    \addplot[my-yellow,very thick,dashed] table [x=data, y=percent, col sep=comma] {csv-files/assign/compact-runtime-2.csv};
    \addlegendentry{{CCG}}
  \end{semilogxaxis}
\end{tikzpicture}
\quad
\begin{tikzpicture}[scale=0.75, baseline={(0,0)}]
  \begin{axis}[
    xlabel = {Optimality Gap (in \si{\percent})},
    scaled x ticks = false,
    xticklabel = {\pgfmathparse{\tick*100}\pgfmathprintnumber{\pgfmathresult}},
    y tick label style={
      /pgf/number format/.cd,
      fixed,
      fixed zerofill,
      precision=1,
      /tikz/.cd
    },
    xtick pos = bottom,
    ytick pos = left,
    xmin = 0,
    xmax = 0.3969465648854962,
    ymin = 0,
    ymax = 1,
    legend pos=south east,
    legend cell align={left},
    xmajorgrids=true,
    ymajorgrids=true,
    grid style=dotted
    ]
    \addplot[my-red,very thick] table [x=data, y=percent, col sep=comma] {csv-files/assign/compact-gap-0.csv};
    \addlegendentry{{MILP}}
    \addplot[my-blue,very thick,densely dashed] table [x=data, y=percent, col sep=comma] {csv-files/assign/compact-gap-1.csv};
    \addlegendentry{{BnC}}
    \addplot[my-yellow,very thick,dashed] table [x=data, y=percent, col sep=comma] {csv-files/assign/compact-gap-2.csv};
    \addlegendentry{{CCG}}
  \end{axis}
\end{tikzpicture}
  \caption{Log-scaled ECDFs of the runtimes (in \si{\second}) and
    linear-scaled ECDFs of the optimality gaps (in \si{\percent}) for the
    approaches MILP, BnC, and CCG applied to the compact
    formulation~\eqref{eq:compact-ref} of~AP.}
  \label{fig:compact}
\end{figure}

For the same reasons, no incumbent solution is found for any instance of
the single-source capacitated facility location problem using the
MILP or CCG approaches. Therefore, we do not show ECDFs for the
compact formulation applied to SSCFLP. Nevertheless, the branch-and-cut approach
solves~$100$ out of~$222$~instances (\SI{45.05}{\percent}) to global optimality
within the time limit of~\SI{1}{\hour}. For additional~$72$~instances, a
feasible point with an open but finite gap of at most~\SI{10.86}{\percent} is
found, whereas no incumbent solution is found for the remaining~$50$~instances.
All previous observations are also supported by the results shown in
Table~\ref{tab:compact} in Appendix~\ref{sec:appendix-comp-results}.

To sum up, the CCG method outperforms the MILP and BnC approach for AP, whereas
branch-and-cut is the only approach capable of tackling instances of SSCFLP
when considering the compact formulation.

\subsection{Comparison of Formulations}
\label{sec:comp-results:comparison}

We now compare the four considered formulations---the scenario-based,
projection-based, extended, and compact formulation---by considering their
respective best-performing solution methods. This means that we apply CCG to
the scenario-based and the extended formulation, and we apply BnC to the
projection-based formulation. For the compact formulation, we make the
following distinction: we use CCG for AP and BnC for SSCFLP.

In Figure~\ref{fig:comparison}, we show ECDFs of the runtimes and the
optimality gaps for the approaches considered with these four formulations.
Moreover, we report the number of instances solved to global optimality in
Table~\ref{tab:comparison}.
For AP, it can be seen that the only approach capable of solving all of
the~$240$~considered instances to global optimality is the CCG approach applied
to the extended formulation. In particular, we observe that this approach also
outperforms all remaining ones in terms of running times.
For SSCFLP, we observe that the extended formulation again yields an approach
that performs considerably better than the remaining ones in terms of
runtimes. However, when considering the ECDFs of the optimality gaps in
Figure~\ref{fig:comparison}, it can be seen that the solution quality obtained
from the scenario-based formulation is slightly better. Nevertheless, the CCG
method applied to the extended formulation
significantly outperforms this approach in terms of running times. We also
point out that BnC for the compact formulation performs similarly well compared
to the CCG approach applied to the extended
formulation. However, as more instances can be solved to global optimality
using the extended formulation, we consider the latter the overall best
formulation for solving SSCFLP instances.

\begin{figure}
  \centering
\begin{tikzpicture}[scale=0.75, baseline={(0,0)}]
  \begin{semilogxaxis}[
    xlabel = {Runtime (\si{\second})},
    xticklabels = {0.1,1,10,100,1000},
    y tick label style={
      /pgf/number format/.cd,
      fixed,
      fixed zerofill,
      precision=1,
      /tikz/.cd
    },
    xtick pos = bottom,
    ytick pos = left,
    xmin = 0.1444711685180664,
    xmax = 3386.7642047405243,
    ymin = 0,
    ymax = 1,
    legend pos=south east,
    legend cell align={left},
    xmajorgrids=true,
    ymajorgrids=true,
    grid style=dotted
    ]
    \addplot[my-red,very thick] table [x=data, y=percent, col sep=comma] {csv-files/assign/winner-runtime-0.csv};
    \addlegendentry{{scenario-based}}
    \addplot[my-blue,very thick,densely dashed] table [x=data, y=percent, col sep=comma] {csv-files/assign/winner-runtime-1.csv};
    \addlegendentry{{projection-based}}
    \addplot[my-yellow,very thick,dashed] table [x=data, y=percent, col sep=comma] {csv-files/assign/winner-runtime-2.csv};
    \addlegendentry{{extended}}
    \addplot[my-green,very thick,densely dotted] table [x=data, y=percent, col sep=comma] {csv-files/assign/winner-runtime-3.csv};
    \addlegendentry{{compact}}
  \end{semilogxaxis}
\end{tikzpicture}
\quad
\begin{tikzpicture}[scale=0.75, baseline={(0,0)}]
  \begin{axis}[
    xlabel = {Optimality Gap (in \si{\percent})},
    xticklabel = {\pgfmathparse{\tick*100}\pgfmathprintnumber{\pgfmathresult}},
    y tick label style={
      /pgf/number format/.cd,
      fixed,
      fixed zerofill,
      precision=1,
      /tikz/.cd
    },
    xtick pos = bottom,
    ytick pos = left,
    xmin = 0,
    xmax = 0.15111679287698998,
    ymin = 0,
    ymax = 1,
    legend pos=south east,
    legend cell align={left},
    xmajorgrids=true,
    ymajorgrids=true,
    grid style=dotted
    ]
    \addplot[my-red,very thick] table [x=data, y=percent, col sep=comma] {csv-files/assign/winner-gap-0.csv};
    \addlegendentry{{scenario-based}}
    \addplot[my-blue,very thick,densely dashed] table [x=data, y=percent, col sep=comma] {csv-files/assign/winner-gap-1.csv};
    \addlegendentry{{projection-based}}
    \addplot[my-yellow,very thick,dashed] table [x=data, y=percent, col sep=comma] {csv-files/assign/winner-gap-2.csv};
    \addlegendentry{{extended}}
    \addplot[my-green,very thick,densely dotted] table [x=data, y=percent, col sep=comma] {csv-files/assign/winner-gap-3.csv};
    \addlegendentry{{compact}}
  \end{axis}
\end{tikzpicture}
\qquad
\begin{tikzpicture}[scale=0.75, baseline={(0,0)}]
  \begin{semilogxaxis}[
    xlabel = {Runtime (\si{\second})},
    xticklabels = {0.1,1,10,100,1000},
    y tick label style={
      /pgf/number format/.cd,
      fixed,
      fixed zerofill,
      precision=1,
      /tikz/.cd
    },
    xtick pos = bottom,
    ytick pos = left,
    xmin = 0.18972086906433105,
    xmax = 3605.4541091918945,
    ymin = 0,
    ymax = 1,
    legend pos=north west,
    legend cell align={left},
    xmajorgrids=true,
    ymajorgrids=true,
    grid style=dotted
    ]
    \addplot[my-red,very thick] table [x=data, y=percent, col sep=comma] {csv-files/cap/winner-runtime-0.csv};
    \addlegendentry{{scenario-based}}
    \addplot[my-blue,very thick,densely dashed] table [x=data, y=percent, col sep=comma] {csv-files/cap/winner-runtime-1.csv};
    \addlegendentry{{projection-based}}
    \addplot[my-yellow,very thick,dashed] table [x=data, y=percent, col sep=comma] {csv-files/cap/winner-runtime-2.csv};
    \addlegendentry{{extended}}
    \addplot[my-green,very thick,densely dotted] table [x=data, y=percent, col sep=comma] {csv-files/cap/winner-runtime-3.csv};
    \addlegendentry{{compact}}
  \end{semilogxaxis}
\end{tikzpicture}
\quad
\begin{tikzpicture}[scale=0.75, baseline={(0,0)}]
  \begin{axis}[
    xlabel = {Optimality Gap (in \si{\percent})},
    xticklabel = {\pgfmathparse{\tick*100}\pgfmathprintnumber{\pgfmathresult}},
    y tick label style={
      /pgf/number format/.cd,
      fixed,
      fixed zerofill,
      precision=1,
      /tikz/.cd
    },
    xtick pos = bottom,
    ytick pos = left,
    xmin = 0,
    xmax = 0.2628029864610451,
    ymin = 0,
    ymax = 1,
    legend pos=south east,
    legend cell align={left},
    xmajorgrids=true,
    ymajorgrids=true,
    grid style=dotted
    ]
    \addplot[my-red,very thick] table [x=data, y=percent, col sep=comma] {csv-files/cap/winner-gap-0.csv};
    \addlegendentry{{scenario-based}}
    \addplot[my-blue,very thick,densely dashed] table [x=data, y=percent, col sep=comma] {csv-files/cap/winner-gap-1.csv};
    \addlegendentry{{projection-based}}
    \addplot[my-yellow,very thick,dashed] table [x=data, y=percent, col sep=comma] {csv-files/cap/winner-gap-2.csv};
    \addlegendentry{{extended}}
    \addplot[my-green,very thick,densely dotted] table [x=data, y=percent, col sep=comma] {csv-files/cap/winner-gap-3.csv};
    \addlegendentry{{compact}}
  \end{axis}
\end{tikzpicture}
  \caption{Log-scaled ECDFs of the runtimes (in \si{\second}) and
    linear-scaled ECDFs of the optimality gaps (in \si{\percent}) for the
    best-performing approaches applied to the scenario-based, projection-based,
    extended, and compact formulations of
    Problem~\eqref{eq:recoverable-robust-problem}.
    Results for AP are shown in the top figures and results for
    SSCFLP are shown in the bottom figures.}
  \label{fig:comparison}
\end{figure}

\begin{table}
  \caption{The number of instances solved to global optimality (``solved''; out
    of $240$ and $222$ instances for AP and SSCFLP,
    respectively), the number of instances for which a
    feasible point with finite but non-zero gap is found (``open gap''), and
    the number of instances with infinite gap (``infinite gap'') for ``winner
    approaches'' of the four considered formulations.
    For those instances with finite but non-zero gap, also the average
    gap (``average gap''; in \si{\percent}) is~shown.}
  \begin{tabular}{llrrrr}
    \toprule
    problem & formulation & solved & open gap & average gap & infinite gap \\
    \midrule
    AP & 
    scenario-based & 144 & 95 & 3.88 & 1 \\
    & projection-based & 210 & 30 & 5.19 & 0 \\
    & extended & 240 & 0 & -- & 0 \\
    & compact & 238 & 0 & -- & 2 \\
    \midrule
    SSCFLP &
    scenario-based & 83 & 139 & 2.40 & 0 \\
    & projection-based & 196 & 8 & 1.87 & 18 \\
    & extended & 195 & 27 & 14.35 & 0 \\
    & compact & 100 & 72 & 10.86 & 50 \\
    \bottomrule
  \end{tabular}
  \label{tab:comparison}
\end{table}

Nevertheless, let us also compare our methods with respect to so-called
idealized parallel runtimes, which reflect the runtime of a method assuming
that sufficient capacities are available to solve all arising sub-problems in
parallel. As discussed in Section~\ref{sec:solution-approaches}, some of our
methods can be partially parallelized. Hence, for each instance, we compute the
idealized parallel runtime by first solving all sub-problems sequentially and
then taking the maximum runtime across all sub-problems. If an instance cannot
be solved within the time limit in the sequential setting, it is thus also
considered as unsolved in the idealized parallel setting.
In Figure~\ref{fig:comparison-idealized}, we show ECDFs of the idealized parallel
runtimes. It can be seen that the previous observations are even more
pronounced in this setting. Overall, applying the CCG method to the extended
formulation yields an approach that clearly outperforms the remaining ones in
terms of idealized parallel runtimes.
To sum up, the extended formulation~\eqref{eq:extended-formulation} thus seems
to be the most effective formulation for solving the~$k$-delete recoverable
robust $0$--$1$ problem.

\begin{figure}
  \centering
\begin{tikzpicture}[scale=0.75, baseline={(0,0)}]
  \begin{semilogxaxis}[
    xlabel = {Idealized Parallel Runtime (\si{\second})},
    xticklabels = {0,0.1,1,10,100,1000},
    y tick label style={
      /pgf/number format/.cd,
      fixed,
      fixed zerofill,
      precision=1,
      /tikz/.cd
    },
    xtick pos = bottom,
    ytick pos = left,
    xmin = 0.09515810012817383,
    xmax = 3499.321398973465,
    ymin = 0,
    ymax = 1,
    legend pos=south east,
    legend cell align={left},
    xmajorgrids=true,
    ymajorgrids=true,
    grid style=dotted
    ]
    \addplot[my-red,very thick] table [x=data, y=percent, col sep=comma] {csv-files/assign/winner-idealruntime-0.csv};
    \addlegendentry{{scenario-based}}
    \addplot[my-blue,very thick,densely dashed] table [x=data, y=percent, col sep=comma] {csv-files/assign/winner-idealruntime-1.csv};
    \addlegendentry{{projection-based}}
    \addplot[my-yellow,very thick,dashed] table [x=data, y=percent, col sep=comma] {csv-files/assign/winner-idealruntime-2.csv};
    \addlegendentry{{extended}}
    \addplot[my-green,very thick,densely dotted] table [x=data, y=percent, col sep=comma] {csv-files/assign/winner-idealruntime-3.csv};
    \addlegendentry{{compact}}
  \end{semilogxaxis}
\end{tikzpicture}
\quad
\begin{tikzpicture}[scale=0.75, baseline={(0,0)}]
  \begin{semilogxaxis}[
    xlabel = {Idealized Parallel Runtime (\si{\second})},
    xticklabels = {0,0.1,1,10,100,1000},
    y tick label style={
      /pgf/number format/.cd,
      fixed,
      fixed zerofill,
      precision=1,
      /tikz/.cd
    },
    xtick pos = bottom,
    ytick pos = left,
    xmin = 0.06347084045410156,
    xmax = 3600.04319691658,
    ymin = 0,
    ymax = 1,
    legend pos=north west,
    legend cell align={left},
    xmajorgrids=true,
    ymajorgrids=true,
    grid style=dotted
    ]
    \addplot[my-red,very thick] table [x=data, y=percent, col sep=comma] {csv-files/cap/winner-idealruntime-0.csv};
    \addlegendentry{{scenario-based}}
    \addplot[my-blue,very thick,densely dashed] table [x=data, y=percent, col sep=comma] {csv-files/cap/winner-idealruntime-1.csv};
    \addlegendentry{{projection-based}}
    \addplot[my-yellow,very thick,dashed] table [x=data, y=percent, col sep=comma] {csv-files/cap/winner-idealruntime-2.csv};
    \addlegendentry{{extended}}
    \addplot[my-green,very thick,densely dotted] table [x=data, y=percent, col sep=comma] {csv-files/cap/winner-idealruntime-3.csv};
    \addlegendentry{{compact}}
  \end{semilogxaxis}
\end{tikzpicture}
  \caption{Log-scaled ECDFs of the idealized parallel runtimes (in
    \si{\second}) for the best-performing approaches applied to the
    scenario-based, projection-based, extended, and compact formulations of
    Problem~\eqref{eq:recoverable-robust-problem}.
    Results for AP are shown on the right, whereas results for
    SSCFLP are shown on the left.}
  \label{fig:comparison-idealized}
\end{figure}

\subsection{Gain of Recovery and Impact of Robust Modeling}
\label{sec:gain-n-price}

To conclude this computational study, let us now discuss some qualitative
aspects of solutions.
We start with evaluating the gain of recovery, i.e., the decrease in the
optimal objective value obtained by incorporating the possibility of recovery
actions in the modeling. To this end, we compare the optimal objective value of
the~$k$-delete recoverable robust problem with the objective value obtained by
first solving the robust problem without recovery (i.e.,
Problem~\eqref{eq:recoverable-robust-problem} with~$k=0$) and then deleting up
to~$k$ components of the optimal solution ex post for which the worst-case cost
increase is realized.
In Figure~\ref{fig:gain-recovery}, we show boxplots for the gain of recovery
for the different choices of the recovery parameter~$k$. Note that we only
include instances that have been solved to global optimality.
In particular, our comparisons are based on the results obtained from the CCG
method applied to the extended formulation, which is the most effective
solution approach for both AP and SSCFLP instances, in order to include as many
instances solved to global optimality as possible.
\begin{figure}
  \centering
\begin{tikzpicture}[baseline={(0,0)}]
\begin{axis}[
   width=69mm,
   height=50mm,
   xtick style={draw=none},
   xmin=1,
   xmax=2,
   ymin=0,
   ymax=32,
   xtick={1,2},
   xticklabels={$\kappa=0.1$,$\kappa=0.25$},
   enlarge x limits=.5,
   xtick pos=bottom,
   ytick pos=left,
   y tick label style={/pgf/number format/.cd,%
      scaled y ticks=false,
      set thousands separator={},
      fixed,
      precision=0},
   ]

   \draw[thick, my-red] (axis cs:0.7,12.358219795846773) -- (axis cs:1.3,12.358219795846773);
   \draw[thick, my-red] (axis cs:1.7,23.434145328156887) -- (axis cs:2.3,23.434145328156887);

   \draw (axis cs:0.7,11.12567828595021) rectangle (axis cs:1.3,13.871160434476554);
   \draw (axis cs:1.7,19.625222178336312) rectangle (axis cs:2.3,24.37746142324737);

   \draw (axis cs:1,9.9489542118515) -- (axis cs:1,11.12567828595021);
   \draw (axis cs:0.85,9.9489542118515) -- (axis cs:1.15,9.9489542118515);

   \draw (axis cs:2,15.447112609450924) -- (axis cs:2,19.625222178336312);
   \draw (axis cs:1.85,15.447112609450924) -- (axis cs:2.15,15.447112609450924);

   \draw (axis cs:1,13.871160434476554) -- (axis cs:1,17.543782702707446);
   \draw (axis cs:0.85,17.543782702707446) -- (axis cs:1.15,17.543782702707446);

   \draw (axis cs:2,24.37746142324737) -- (axis cs:2,29.670220988201514);
   \draw (axis cs:1.85,29.670220988201514) -- (axis cs:2.15,29.670220988201514);

   \addplot [only marks, mark=o, mark size=1.5pt] table {
      1 18.26079017047752
      1 18.86783552907769
      1 19.291262632824278
      1 18.587291497057635
      };
\end{axis}
\end{tikzpicture}
\quad
\begin{tikzpicture}[baseline={(0,0)}]
\begin{axis}[
   width=69mm,
   height=50mm,
   xtick style={draw=none},
   xmin=1,
   xmax=2,
   ymin=0,
   ymax=36,
   xtick={1,2},
   xticklabels={$\kappa=0.1$,$\kappa=0.25$},
   enlarge x limits=.5,
   xtick pos=bottom,
   ytick pos=left,
   y tick label style={/pgf/number format/.cd,%
      scaled y ticks=false,
      set thousands separator={},
      fixed,
      precision=0},
   ]

   \draw[thick, my-red] (axis cs:0.7,19.789900980293044) -- (axis cs:1.3,19.789900980293044);
   \draw[thick, my-red] (axis cs:1.7,26.866616277654003) -- (axis cs:2.3,26.866616277654003);

   \draw (axis cs:0.7,19.128273650585147) rectangle (axis cs:1.3,21.081441651623457);
   \draw (axis cs:1.7,25.566723271756068) rectangle (axis cs:2.3,29.125104927642223);

   \draw (axis cs:1,16.706621165879895) -- (axis cs:1,19.128273650585147);
   \draw (axis cs:0.85,16.706621165879895) -- (axis cs:1.15,16.706621165879895);

   \draw (axis cs:2,22.781977370500442) -- (axis cs:2,25.566723271756068);
   \draw (axis cs:1.85,22.781977370500442) -- (axis cs:2.15,22.781977370500442);

   \draw (axis cs:1,21.081441651623457) -- (axis cs:1,22.310383429638467);
   \draw (axis cs:0.85,22.310383429638467) -- (axis cs:1.15,22.310383429638467);

   \draw (axis cs:2,29.125104927642223) -- (axis cs:2,33.207080790093045);
   \draw (axis cs:1.85,33.207080790093045) -- (axis cs:2.15,33.207080790093045);

   \addplot [only marks, mark=o, mark size=1.5pt] table {
      1 26.155228662854
      1 26.41122360520129
      1 25.157609181822444
      1 24.55732852577625
      1 24.172292735750933
      1 26.155228662854
      1 25.157609181822444
      1 24.55732852577625
      1 24.172292735750933
      1 24.17601654350902
      };
\end{axis}
\end{tikzpicture}
  \caption{Boxplots for the gain of recovery (in \si{\percent}).
    Results for AP are shown on the left, whereas results for SSCFLP are
    shown on the right. Instances with the recovery parameter $k = \lceil
    \kappa n \rceil$ are considered, where $n$ is the number of variables in
    each instance.}
  \label{fig:gain-recovery}
\end{figure}
From Figure~\ref{fig:gain-recovery}, it can be clearly seen that incorporating
the possibility of recovery actions in the planning phase through a two-stage
robust framework enables more flexible and informed decision-making, leading to
substantially reduced costs. In particular, up to \SI{29.67}{\percent} and
\SI{33.21}{\percent} of the costs can be saved for AP and SSCFLP, respectively,
if recovery actions are incorporated in the modeling. Moreover, our results
suggest that these cost benefits increase with the number of allowed recovery
actions.
Nevertheless, these benefits also come at a certain price. We thus also analyze
the ``price of recovery'', i.e., the additional cost incurred by a recoverable
robust solution if recovery actions are not allowed after all.
To this end, we first compute an optimal first-stage decision~$x^*$ for the
$k$-delete  recoverable robust problem~\eqref{eq:recoverable-robust-problem}
and then evaluate the worst-case cost of this solution under the assumption
that no components of~$x^*$ can be deleted. 
We then compare this objective value to the one obtained by solving the robust
problem without recovery (i.e., Problem~\eqref{eq:recoverable-robust-problem}
with $k=0$).
For the price of recovery, we show boxplots in
Figure~\ref{fig:cost-recovery}. We observe that incorporating a recoverable
robust solution without being able to delete items afterward can increase costs
by up to \SI{4.49}{\percent} and \SI{15.07}{\percent} for AP and SSCFLP,
respectively. Comparing these increases with the potential savings mentioned
above shows that it can clearly be beneficial to incorporate recovery actions
in the decision-making process. Overall, our results  indicate that allowing
for recovery actions can yield substantial cost reductions, whereas the
associated cost increase without the possibility of recovery remains moderate
and manageable.

\begin{figure}
  \centering
\begin{tikzpicture}[baseline={(0,0)}]
\begin{axis}[
   width=69mm,
   height=50mm,
   xtick style={draw=none},
   xmin=1,
   xmax=2,
   ymin=0,
   ymax=6.5,
   xtick={1,2},
   xticklabels={$\kappa=0.1$,$\kappa=0.25$},
   enlarge x limits=.5,
   xtick pos=bottom,
   ytick pos=left,
   y tick label style={/pgf/number format/.cd,%
      scaled y ticks=false,
      set thousands separator={},
      fixed,
      precision=0},
   ]
   
   \draw[thick, my-red] (axis cs:0.7,0.3176161561416474) -- (axis cs:1.3,0.3176161561416474);
   \draw[thick, my-red] (axis cs:1.7,0.2751445740706333) -- (axis cs:2.3,0.2751445740706333);

   \draw (axis cs:0.7,0.0) rectangle (axis cs:1.3,0.8109712480003659);
   \draw (axis cs:1.7,0.0) rectangle (axis cs:2.3,0.798971182400113);

   \draw (axis cs:1,0.0) -- (axis cs:1,0.0);
   \draw (axis cs:0.85,0.0) -- (axis cs:1.15,0.0);

   \draw (axis cs:2,0.0) -- (axis cs:2,0.0);
   \draw (axis cs:1.85,0.0) -- (axis cs:2.15,0.0);

   \draw (axis cs:1,0.8109712480003659) -- (axis cs:1,2.02019691869465);
   \draw (axis cs:0.85,2.02019691869465) -- (axis cs:1.15,2.02019691869465);

   \draw (axis cs:2,0.798971182400113) -- (axis cs:2,1.9512099940975896);
   \draw (axis cs:1.85,1.9512099940975896) -- (axis cs:2.15,1.9512099940975896);

   \addplot [only marks, mark=o, mark size=1.5pt] table {
      1 2.252242107017536
      };

   \addplot [only marks, mark=o, mark size=1.5pt] table {
      2 2.252242107017536
      2 2.074680188049012
      2 2.051271531940862
      2 4.487165105240048
      };
\end{axis}
\end{tikzpicture}
\quad
\begin{tikzpicture}[baseline={(0,0)}]
\begin{axis}[
   width=69mm,
   height=50mm,
   xtick style={draw=none},
   xmin=1,
   xmax=2,
   ymin=0,
   ymax=17,
   xtick={1,2},
   xticklabels={$\kappa=0.1$,$\kappa=0.25$},
   enlarge x limits=.5,
   xtick pos=bottom,
   ytick pos=left,
   y tick label style={/pgf/number format/.cd,%
      scaled y ticks=false,
      set thousands separator={},
      fixed,
      precision=0},
   ]

   \draw[thick, my-red] (axis cs:0.7,8.556222320780423) -- (axis cs:1.3,8.556222320780423);
   \draw[thick, my-red] (axis cs:1.7,10.838628631917608) -- (axis cs:2.3,10.838628631917608);

   \draw (axis cs:0.7,6.552551364253379) rectangle (axis cs:1.3,12.813775275377676);
   \draw (axis cs:1.7,6.865260655115561) rectangle (axis cs:2.3,13.303193577162057);

   \draw (axis cs:1,2.546590731288029) -- (axis cs:1,6.552551364253379);
   \draw (axis cs:0.85,2.546590731288029) -- (axis cs:1.15,2.546590731288029);

   \draw (axis cs:2,2.5176881128102564) -- (axis cs:2,6.865260655115561);
   \draw (axis cs:1.85,2.5176881128102564) -- (axis cs:2.15,2.5176881128102564);

   \draw (axis cs:1,12.813775275377676) -- (axis cs:1,14.840842138600289);
   \draw (axis cs:0.85,14.840842138600289) -- (axis cs:1.15,14.840842138600289);

   \draw (axis cs:2,13.303193577162057) -- (axis cs:2,15.06933460963741);
   \draw (axis cs:1.85,15.06933460963741) -- (axis cs:2.15,15.06933460963741);
\end{axis}
\end{tikzpicture}
  \caption{Boxplots of the price of recovery (in \si{\percent}).
    Results for AP are shown on the left, whereas results for SSCFLP are
    shown on the right. Instances with the recovery parameter $k = \lceil
    \kappa n \rceil$ are considered, where $n$ is the number of variables in
    each instance.}
  \label{fig:cost-recovery}
\end{figure}

Let us now comment on the impact of robustness on the costs. To this
end, we compare the optimal objective value of a recoverable
robust solution to that of the respective deterministic two-stage variant of
the problem (i.e., Problem~\eqref{eq:recoverable-robust-problem}
with~$\Gamma=0$), in which the worst-case cost increase is computed ex post.
In Figure~\ref{fig:price-robust}, we show boxplots for the  cost savings
obtained through considering a robust framework. We observe that these savings
increase with larger values of~$\Gamma$. Overall, up to \SI{16.52}{\percent}
and \SI{21.74}{\percent} of the costs can be saved for AP and SSCFLP,
respectively. This demonstrates that our~$k$-delete recoverable 
robust framework allows for decisions that are more resilient against
uncertainties and cost variability.
Finally, we note that the previous analysis is closely related to the so-called
price of robustness \parencite{Bertsimas_Sim:2004}, which measures the increase
in the optimal objective function due to robustification. However, our metric
differs from the price of robustness in the sense that we evaluate the decrease
in the optimal objective function when comparing robust and nominal solutions
under adverse settings.

\begin{figure}
  \centering
\begin{tikzpicture}
\begin{axis}[
   width=69mm,
   height=55mm,
   xtick style={draw=none},
   xmin=1,
   xmax=3,
   ymin=0,
   ymax=18,
   xtick={1,2,3},
   xticklabels={$\gamma=0.1$,$\gamma=0.25$,$\gamma=0.5$},
   enlarge x limits=.33,
   xtick pos=bottom,
   ytick pos=left,
   y tick label style={/pgf/number format/.cd,%
      scaled y ticks=false,
      set thousands separator={},
      fixed,
      precision=0},
   ]

   \draw[thick, my-red] (axis cs:0.7,6.5542587007490045) -- (axis cs:1.3,6.5542587007490045);
   \draw[thick, my-red] (axis cs:1.7,9.906509254930668) -- (axis cs:2.3,9.906509254930668);
   \draw[thick, my-red] (axis cs:2.7,10.701419169143449) -- (axis cs:3.3,10.701419169143449);

   \draw (axis cs:0.7,5.324784502302727) rectangle (axis cs:1.3,7.358337986380054);
   \draw (axis cs:1.7,5.522845798037141) rectangle (axis cs:2.3,11.735065283681239);
   \draw (axis cs:2.7,6.983708318256094) rectangle (axis cs:3.3,14.123728421143914);

   \draw (axis cs:1,3.508759618387304) -- (axis cs:1,5.324784502302727);
   \draw (axis cs:0.85,3.508759618387304) -- (axis cs:1.15,3.508759618387304);

   \draw (axis cs:2,3.2573183800704237) -- (axis cs:2,5.522845798037141);
   \draw (axis cs:1.85,3.2573183800704237) -- (axis cs:2.15,3.2573183800704237);

   \draw (axis cs:3,4.76188586553228) -- (axis cs:3,6.983708318256094);
   \draw (axis cs:2.85,4.76188586553228) -- (axis cs:3.15,4.76188586553228);

   \draw (axis cs:1,7.358337986380054) -- (axis cs:1,9.278302689161395);
   \draw (axis cs:0.85,9.278302689161395) -- (axis cs:1.15,9.278302689161395);

   \draw (axis cs:2,11.735065283681239) -- (axis cs:2,14.953201153265638);
   \draw (axis cs:1.85,14.953201153265638) -- (axis cs:2.15,14.953201153265638);

   \draw (axis cs:3,14.123728421143914) -- (axis cs:3,16.52166729709871);
   \draw (axis cs:2.85,16.52166729709871) -- (axis cs:3.15,16.52166729709871);
\end{axis}
\end{tikzpicture}
\quad
\begin{tikzpicture}
\begin{axis}[
   width=69mm,
   height=55mm,
   xtick style={draw=none},
   xmin=1,
   xmax=3,
   ymin=0,
   ymax=23,
   xtick={1,2,3},
   xticklabels={$\gamma=0.1$,$\gamma=0.25$,$\gamma=0.5$},
   enlarge x limits=.33,
   xtick pos=bottom,
   ytick pos=left,
   y tick label style={/pgf/number format/.cd,%
      scaled y ticks=false,
      set thousands separator={},
      fixed,
      precision=0},
   ]

   \draw[thick, my-red] (axis cs:0.7,13.153090693348648) -- (axis cs:1.3,13.153090693348648);
   \draw[thick, my-red] (axis cs:1.7,15.648859437748328) -- (axis cs:2.3,15.648859437748328);
   \draw[thick, my-red] (axis cs:2.7,17.711327911975555) -- (axis cs:3.3,17.711327911975555);

   \draw (axis cs:0.7,11.372360474387307) rectangle (axis cs:1.3,14.021240365046296);
   \draw (axis cs:1.7,12.270637025418889) rectangle (axis cs:2.3,17.250365729400286);
   \draw (axis cs:2.7,13.942824312625264) rectangle (axis cs:3.3,18.877698385115217);

   \draw (axis cs:1,8.382369368510222) -- (axis cs:1,11.372360474387307);
   \draw (axis cs:0.85,8.382369368510222) -- (axis cs:1.15,8.382369368510222);

   \draw (axis cs:2,9.866168629751526) -- (axis cs:2,12.270637025418889);
   \draw (axis cs:1.85,9.866168629751526) -- (axis cs:2.15,9.866168629751526);

   \draw (axis cs:3,10.332813901690848) -- (axis cs:3,13.942824312625264);
   \draw (axis cs:2.85,10.332813901690848) -- (axis cs:3.15,10.332813901690848);

   \draw (axis cs:1,14.021240365046296) -- (axis cs:1,17.312564819234662);
   \draw (axis cs:0.85,17.312564819234662) -- (axis cs:1.15,17.312564819234662);

   \draw (axis cs:2,17.250365729400286) -- (axis cs:2,20.698596512629454);
   \draw (axis cs:1.85,20.698596512629454) -- (axis cs:2.15,20.698596512629454);

   \draw (axis cs:3,18.877698385115217) -- (axis cs:3,21.742113577948995);
   \draw (axis cs:2.85,21.742113577948995) -- (axis cs:3.15,21.742113577948995);
\end{axis}
\end{tikzpicture}
  \caption{Boxplots of the impact of robustness (in \si{\percent}).
    Results for AP are shown on the left, whereas results for SSCFLP are
    shown on the right. Instances with the robustness parameter $\Gamma =
    \lceil \gamma n \rceil$ are considered, where $n$ is the number of
    variables in each instance.}
  \label{fig:price-robust}
\end{figure}


\section{Conclusion}
\label{sec:conclusion}

In this paper, we study the~$k$-delete recoverable robust $0$--$1$ problem in
which a decision-maker solves a combinatorial optimization problem subject to
objective uncertainty.
In this setting, the decision-maker first commits to an initial plan and may
revoke up to~$k$ components of this initial decision after the uncertainty is
revealed.
The underlying uncertainty is modeled using a budgeted uncertainty set so
that the decision-maker only hedges against at most~$\Gamma$ deviations in
the uncertain parameters.
We derive four equivalent reformulations of the~$k$-delete recoverable
robust problem---a scenario-based, a projection-based, an extended, and
a compact reformulation. For each formulation, we present exact solution
methods, including (i)~compact formulations that can be tackled using
general-purpose MILP solvers, (ii) branch-and-cut frameworks, and
(iii)~column-and-constraint generation algorithms.
The performance of all presented solution approaches is assessed in an
extensive computational study on instances of the assignment problem and the
single-source capacitated facility location problem.
Our computational results show that incorporating the possibility of recovery
actions in the planning phase can substantially improve the optimal objective
value compared to withdrawing commitments after solving a deterministic model.
Among all solution methods and reformulations that we consider in this paper,
we further observe that using a column-and-constraint-generation method to
solve the extended formulation seems to be the most effective one.

Finally, let us mention that our modeling of the~$k$-delete recoverable robust
problem allows recovered solutions to violate some constraints of the
original model. For instance, some customers may remain unserved in a facility
location problem.
Although such situations are common in practice, one may still be
interested in models that preserve feasibility after recovery. Developing
solution methods for such models typically requires different solution
techniques, which is beyond the scope of the present paper and left for future
research.
Another aspect that remains open is the computational complexity of the
$k$-delete recoverable robust problem studied in this work. Similar settings,
however, have been studied in \textcite{Gruene_Wulf:2024}, which may provide
useful insights for addressing this question in future work.
Moreover, it may be interesting to study alternative
recovery actions, such as allowing insertions or both insertions and
deletions. Extending the solution methods developed here, together with
existing approaches in recoverable robust optimization, to handle such recovery
actions in general combinatorial problems would provide a natural next step.
In addition, future work could consider other types of uncertainty, such as
uncertain constraints, and explore robust models beyond the budgeted
uncertainty framework studied in this work.


\section*{Acknowledgment}

This research was funded in part by the French National Research
Agency (ANR) under project ANR-24-CE48-3565-03.


\ifSubmission
  \section*{Conflict of Interest}
  The authors declare that they have no conflict of interest.
\fi

\printbibliography

\appendix
\markboth{APPENDIX}{APPENDIX}  
\section{Proofs of Lemmas~\ref{lem:proj-ref} and~\ref{lem:pos-neg-set}}
\label{sec:proofs}

In this section, we provide the omitted proofs of Lemmas~\ref{lem:proj-ref}
and~\ref{lem:pos-neg-set}, which are used in the proof of
Proposition~\ref{prop:proj-ref}.

\begin{proof}[\bf Proof of Lemma~\ref{lem:proj-ref}]
  Let~$x \in X$ and~$u \in \uncertaintySet_\Gamma$ be given arbitrarily. Then,
  we obtain
  \begin{equation*}
    \begin{split}
      \gamma(x,u)
      & = \max_y \Defset{\sum_{i=1}^n \left( \nomCost_i x_i +
        \devCost_i x_i u_i \right) y_i}{\sum_{i=1}^n y_i \leq k,\,
        y \in [0,1]^n}
      \\
      & = \min_{\nu,w \geq 0} \Defset{k\nu + \sum_{i=1}^n w_i}{\nu + w_i \geq
          \nomCost_i x_i + \devCost_i x_i u_i,\, i \in N}
      \\
      & = \min_{\nu \geq 0} \Set{k\nu + \sum_{i=1}^n \max \Set{0,\,
        \nomCost_i x_i + \devCost_i x_i u_i - \nu}}
      \\
      & = - \max_{\nu \geq 0} \Set{-k\nu - \sum_{i=1}^n \max \Set{0,\, \nomCost_i
        x_i + \devCost_i x_i u_i - \nu}}.
    \end{split}
  \end{equation*}
  Here, the first equality follows from the total unimodularity of the
  constraint matrix, whereas the second one is due to strong duality. The third
  equality follows from optimality arguments. Hence, we obtain
  \begin{align*}
    \beta(x) =
    \max_{u \in \uncertaintySet_\Gamma,\nu \geq 0}
    \Set{\sum_{i=1}^n \nomCost_ix_i + \sum_{i=1}^n \devCost_ix_iu_i - k\nu
    - \sum_{i=1}^n \max \Set{0,\, \nomCost_i x_i + \devCost_i x_i u_i - \nu}}.
  \end{align*}
  For all~$i \in N$, we have
  \begin{align*}
    & \ \max \Set{0,\, \nomCost_i x_i + \devCost_i x_i u_i - \nu}
    \\
    = & \ \max \Set{0,\, \nomCost_i - \nu} x_i(1 - u_i)
        + \max \Set{0,\, \nomCost_i + \devCost_i - \nu} x_i u_i
  \end{align*}
  and we further distinguish the following two cases.
  \begin{enumerate}
  \item If $\costFun{i}{\nu} < 0$, \ie, $i \in \negSet{\nu}$, we obtain
    \begin{equation*}
      \costFun{i}{\nu}
      = \min \Set{\devCost_i,\, \nu - \nomCost_i}
      = \nu - \nomCost_i
      = \min \Set{0,\, \nu - \nomCost_i}
    \end{equation*}
    because~$\devCost_i \geq 0$ holds.
  \item If $\costFun{i}{\nu} \geq 0$, \ie, $i \in \posSet{\nu}$, we
    obtain~$\nu \geq \nomCost_i$ and, thus,
    $\min \set{0,\, \nu - \nomCost_i} = 0$.
  \end{enumerate}
  Taking all previous considerations into account yields
  \begin{align*}
    & \ \sum_{i=1}^n \nomCost_ix_i + \sum_{i=1}^n \devCost_ix_iu_i - k\nu
      - \sum_{i=1}^n \max \Set{0,\, \nomCost_i x_i + \devCost_i x_i u_i - \nu}
    \\
    = & \ \sum_{i=1}^n \nomCost_i x_i
        + \sum_{i=1}^n \devCost_i x_i u_i - k\nu
        + \sum_{i=1}^n \min \Set{0,\, \nu - \nomCost_i} x_i(1 - u_i)
    \\
    & \ + \sum_{i=1}^n \min \Set{\devCost_i,\, \nu - \nomCost_i} x_iu_i
      - \sum_{i=1}^n \devCost_i x_iu_i
    \\
    = & \ \sum_{i=1}^n \nomCost_i x_i - k\nu
        + \sum_{i=1}^n \min \Set{0,\, \nu - \nomCost_i} x_i(1 - u_i)
        + \sum_{i=1}^n \costFun{i}{\nu} x_i u_i
    \\
    = & \ \sum_{i=1}^n \nomCost_i x_i - k\nu
        + \sum_{i \in \negSet{\nu}} \min \Set{0,\, \nu - \nomCost_i}x_i(1 -
         u_i)
    \\
    & \ + \sum_{i \in \posSet{\nu}} \min \Set{0,\, \nu - \nomCost_i}x_i(1
      - u_i) + \sum_{i=1}^n \costFun{i}{\nu} x_iu_i
    \\
    = & \ \sum_{i=1}^n \nomCost_i x_i - k\nu
        + \sum_{i \in \negSet{\nu}} \costFun{i}{\nu}x_i(1 - u_i)
        + \sum_{i=1}^n \costFun{i}{\nu} x_iu_i
    \\
    = & \ \sum_{i=1}^n \nomCost_i x_i - k\nu
        + \sum_{i \in \negSet{\nu}} \costFun{i}{\nu} x_i
        + \sum_{i \in \posSet{\nu}} \costFun{i}{\nu} x_i u_i,
  \end{align*}
  which concludes the proof.
\end{proof}

\begin{proof}[\bf Proof of Lemma~\ref{lem:pos-neg-set}]
  \begin{enumerate}
  \item The claim immediately follows from the definition of the respective
    sets.
  \item Let~$i \in \leSet{\nu^*}$ be given arbitrarily.
    By construction, we then have~$\devCost_i > \nu^* - \nomCost_i \geq
    \underline{\nu} - \nomCost_i$, which yields
    \begin{align*}
      \costFun{i}{\underline{\nu}} + (\nu^* - \underline{\nu})
      & = \min \Set{\devCost_i,\, \underline{\nu} - \nomCost_i} + (\nu^* -
        \underline{\nu})
        = (\underline{\nu} - \nomCost_i) + (\nu^* - \underline{\nu})
      \\
      & = \nu^* - \nomCost_i
        = \min \Set{\devCost_i,\, \nu^* - \nomCost_i}
      \\
      & = \costFun{i}{\nu^*}.
    \end{align*}
    This proves the first equality in~(ii).
    We now show the second one.
    To this end, let us first assume that~$\bar{\nu} > \nomCost_i +
    \devCost_i$ holds. Because~$i \in \leSet{\nu^*}$,
    we then obtain~$\bar{\nu} > \nomCost_i + \devCost_i > \nu^*$,
    which contradicts the definition of~$\bar{\nu}$.
    Hence, $\bar{\nu} \leq \nomCost_i + \devCost_i$ has to hold and we obtain
    \begin{align*}
      \costFun{i}{\nu^*}
      & = \min \Set{\devCost_i,\, \nu^* - \nomCost_i}
        = \nu^* - \nomCost_i
      \\
      & = (\bar{\nu} - \nomCost_i) - (\bar{\nu} - \nu^*)
      = \min \Set{\devCost_i,\, \bar{\nu} - \nomCost_i} -
        (\bar{\nu} - \nu^*)
      \\
      & = \costFun{i}{\bar{\nu}} - (\bar{\nu} - \nu^*).
    \end{align*}
  \item Let~$i \in \geqSet{\nu^*}$ be given arbitrarily, i.e.,
    $\nu^* \geq \nomCost_i + \devCost_i$.
    By construction, we have~$\underline{\nu} \geq \nomCost_i + \devCost_i$
    as well as
    \begin{equation*}
      \costFun{i}{\underline{\nu}}
      = \min \Set{\devCost_i,\, \underline{\nu} - \nomCost_i}
      = \devCost_i
      = \min \Set{\devCost_i,\, \nu^* - \nomCost_i}
      = \costFun{i}{\nu^*}.
    \end{equation*}
    Moreover, we have~$\bar{\nu} \geq \nu^* \geq \nomCost_i + \devCost_i$,
    which yields
    \begin{equation*}
      \costFun{i}{\bar{\nu}} = \min \Set{\devCost_i,\, \bar{\nu} - \nomCost_i}
      = \devCost_i = \min \Set{\devCost_i,\, \nu^* - \nomCost_i}
      = \costFun{i}{\nu^*}.
    \end{equation*}
  \item From~$i \in \negSet{\nu^*} \cap \posSet{\bar{\nu}}$, we obtain
    $\bar{\nu} \geq \nomCost_i > \nu^*$ and, thus, $\bar{\nu} = \nomCost_i$
    has to hold by the definition of~$\bar{\nu}$. As a result, we have
    $\costFun{i}{\bar{\nu}} = \min \set{\devCost_i,\, \bar{\nu} - \nomCost_i} = 
    0$ because of~$\devCost_i \geq 0$.
  \item We prove the claim by contradiction.
    Hence, suppose that~$\nu^* \notin \newUncertaintySet$ holds and that there
    exists~$i \in \posSet{\nu^*} \cap \negSet{\underline{\nu}}$.
    From~$i \in \posSet{\nu^*}$, $\nu^* \notin \newUncertaintySet$,
    and~$\nomCost_i \in \newUncertaintySet$, we obtain~$\nu^* > \nomCost_i$.
    Because~$i \in \negSet{\underline{\nu}}$, we further
    have~$\nomCost_i > \underline{\nu}$.
    Overall, this yields~$\nu^* > \nomCost_i > \underline{\nu}$, which is a
    contradiction to \mbox{$\underline{\nu} = \max\, \defset{\nu}{\nu \in
        \newUncertaintySet,\, \nu \leq \nu^*} \geq \nomCost_i$}. \qedhere
  \end{enumerate}
\end{proof}

\section{Proof of Theorem~\ref{thm:extended-formulation}}
\label{sec:proof-thm-ext-form}

By Proposition~\ref{prop:proj-ref},
Problem~\eqref{eq:recoverable-robust-problem} can be solved as
\begin{align*}
  \min_{x,\eta} \quad & \initCost^\top x + \eta
  \\
  \st \quad & \eta \geq \alpha(x,\nu) - k\nu, \quad \nu \in
              \newUncertaintySet,
  \\
                      & x \in X,\, \eta \in \R_{\geq 0}.
\end{align*}
Let now~$x \in X$ and~$\nu \in \newUncertaintySet$ be given
arbitrarily. Because~$\alpha(x,\nu)$ corresponds to a classic discrete
budgeted uncertainty model, we obtain
\begin{align*}
  \alpha(x,\nu)
  = & \ \max_u \Defset{ \sum_{i=1}^n \nomCostFun{i}{\nu} x_i
    + \sum_{i=1}^n \devCostFun{i}{\nu} x_i u_i}{
    \sum_{i=1}^n u_i \leq \Gamma,\, u \in [0,1]^n}
  \\
  = & \ \sum_{i=1}^n \nomCostFun{i}{\nu} x_i
      + \min_{w^\nu,z^\nu \geq 0} \Defset{\Gamma w^\nu + \sum_{i=1}^n
      z^\nu_i}{w^\nu + z^\nu_i \geq \devCostFun{i}{\nu} x_i,\, i \in N}.
\end{align*}
Here, the first equality follows from the total unimodularity of the
constraint matrix (cf.\ Proposition~3.3 in \textcite{Wolsey:2020}), whereas
the second one is due to strong duality. Overall, we thus obtain
\begin{align*}
  \eta
  \geq \sum_{i=1}^n \nomCostFun{i}{\nu} x_i
  + \min_{w^\nu,z^\nu \geq 0} \Defset{\Gamma w^\nu + \sum_{i=1}^n
  z^\nu_i}{w^\nu + z^\nu_i \geq \devCostFun{i}{\nu} x_i,\, i \in N}
  - k\nu.
\end{align*}
If, for fixed~$x \in X$ and~$\nu \in \newUncertaintySet$, there is a
pair~$(w^\nu,z^\nu)$ that is feasible for the problem
\begin{equation}
  \label{eq:extended-form-prob}
  \min_{w^\nu,z^\nu \geq 0} \quad
  \Gamma w^\nu + \sum_{i=1}^n z^\nu_i
  \quad \st \quad w^\nu + z^\nu_i \geq \devCostFun{i}{\nu} x_i,
  \quad i \in N,
\end{equation}
and that satisfies the inequality
\begin{equation*}
  \eta \geq \sum_{i=1}^n \nomCostFun{i}{\nu} x_i + \Gamma w^\nu +
  \sum_{i=1}^n z^\nu_i - k\nu,
\end{equation*}
the latter is particularly satisfied for an optimal solution to
Problem~\eqref{eq:extended-form-prob}. This concludes the proof.
\qed


\section{Proof of Proposition~\ref{prop:compact-formulation}}
\label{sec:proof-bs}

By Theorem~\ref{thm:extended-formulation},
Problem~\eqref{eq:recoverable-robust-problem} can be solved as
\begin{equation*}
  \begin{split}
    \min_{x,\eta} \quad & \initCost^\top x + \eta
    \\
    \st \quad & \eta \geq \sum_{i=1}^n \nomCostFun{i}{\nu} x_i
                + \varphi(x,\nu) - k\nu,
                \quad \nu \in \newUncertaintySet,
    \\
                        & x \in X,\, \eta \in \R_{\geq 0},
  \end{split}
\end{equation*}
where, for fixed~$x \in X$ and~$\nu \in \newUncertaintySet$, we have
\begin{equation*}
  \varphi(x,\nu) \define \min_{w^\nu, z^\nu \geq 0} \Defset{
    \Gamma w^\nu + \sum_{i=1}^n z^\nu_i}{w^\nu + z^\nu_i \geq
    \devCostFun{i}{\nu} x_i,\, i \in N}.
\end{equation*}
Let now~$x \in X$ and~$\nu \in \newUncertaintySet$ be given arbitrarily.
By the definition of the perturbation function~$\sigma_\nu$, applying
the same steps as in the proof of Theorem~3 in 
\textcite{Bertsimas_Sim:2003} and the proofs of the results in
\textcite{Lee_Kwon:2014} then yields
\begin{equation*}
  \varphi(x,\nu)
  = \min_{\ell \in \mathcal{L}}
  \Set{\Gamma \devCostFun{\sigma_\nu(\ell)}{\nu}
    + \sum_{i=1}^{\ell}
    \left( \devCostFun{\sigma_\nu(i)}{\nu} -
      \devCostFun{\sigma_\nu(\ell)}{\nu} \right) x_{\sigma_\nu(i)}}.
\end{equation*}
Hence, for all~$\nu \in \newUncertaintySet$, we obtain
\begin{align*}
  \eta & \geq \sum_{i=1}^n \nomCostFun{i}{\nu} x_i + \varphi(x,\nu) - k\nu
  \\
       & = \min_{\ell \in \mathcal{L}} \Set{
         \Gamma \devCostFun{\sigma_\nu(\ell)}{\nu} - k\nu
         + \sum_{i=1}^n \nomCostFun{i}{\nu} x_i
         + \sum_{i=1}^{\ell} \left( \devCostFun{\sigma_\nu(i)}{\nu} -
         \devCostFun{\sigma_\nu(\ell)}{\nu} \right) x_{\sigma_\nu(i)}}
  \\
       & =  \min_{\ell \in \mathcal{L}} \Set{\kappa_\ell(\nu)
         + \sum_{i=1}^n \compCostFun{\sigma_\nu(i)}{\nu}{\ell}
         x_{\sigma_\nu(i)}}. \tag*{\qed}
\end{align*}


\section{Proof of Proposition~\ref{prop:adversarial-prob-ref}}
\label{sec:proof-adversarial-prob}

Per definition, we have~$\scenarioCost{s}_i = \nomCost_i + \devCost_i u^s_i$
for all~$i \in N$ and~$s \in S$ with~$u^s \in \uncertaintySet_\Gamma$.
Hence, Problem~\eqref{eq:adversarial-prob} can
equivalently be stated as the max-min problem
\begin{equation*}
  \max_{z \in Z} \ \min_{y \in Y(x)} \quad
  \sum_{i=1}^n \left( \nomCost_i + \devCost_i z_i \right) y_i
\end{equation*}
with
\begin{equation*}
  Z \define \Defset{z \in \Set{0,1}^n}{\sum_{i=1}^n z_i \leq \Gamma}
\end{equation*}
and
\begin{equation*}
  Y(x) \define \Defset{y \in \Set{0,1}^n}{\sum_{i=1}^n y_i \geq
    \sum_{i=1}^n x_i - k,\, x \geq y}.
\end{equation*}
Applying Proposition~3.3 in \textcite{Wolsey:2020}, we can relax the
integrality restrictions~$y \in \set{0,1}^n$, i.e., we consider
\begin{equation}
  \label{eq:proof:adversarial-prob}
    \min_{y \in [0,1]^n} \quad
    \sum_{i=1}^n \left( \nomCost_i + \devCost_i z_i \right) y_i
    \quad \st \quad
    \sum_{i=1}^n y_i \geq \sum_{i=1}^n x_i - k,\,
    x \geq y
\end{equation}
as the inner minimization problem.
Problem~\eqref{eq:proof:adversarial-prob} is a continuous linear optimization
problem for fixed~$x \in X \subseteq \set{0,1}^n$ and the constraints~$y_i
\leq 1$ are redundant because of~$x \geq y$. Hence, the dual of
Problem~\eqref{eq:proof:adversarial-prob} is given by
\begin{equation*}
  \begin{split}
    \max_{\lambda,\mu} \quad
    & \left( \sum_{i=1}^n x_i - k\right) \lambda - \sum_{i=1}^n x_i\mu_i
    \\
    \st \quad & \lambda - \mu_i \leq \nomCost_i + \devCost_i z_i,
      \quad i \in N,
    \\
    & \mu \in \R^n_{\geq 0},\, \lambda \in \R_{\geq 0}.
  \end{split}
\end{equation*}
Finally, the claim follows by applying strong duality. \qed


\section{Supplementary Tables}
\label{sec:appendix-comp-results}

\begin{table}[h]
  \centering
  \caption{The number of instances solved to global optimality (``solved''; out
    of $240$ and $222$ instances for AP and SSCFLP,
    respectively), the number of instances for which a feasible point with
    finite but non-zero gap is found (``open gap''), and the number of
    instances with infinite gap (``infinite gap'') for the branch-and-cut
    approach applied to the projection-based formulation. For those instances
    with finite but non-zero gap, also the average gap (``average gap''; in
    \si{\percent}) is shown.}
  \begin{tabular}{llrrrr}
    \toprule
    problem & cut strategy & solved & open gap & average gap & infinite gap \\
    \midrule
    AP &
    \textsf{All-In} & 178 & 61 & 6.19 & 1 \\
    & \textsf{First-In} & 218 & 21 & 3.04 & 1 \\
    & \textsf{Shuffle-First-In} & 210 & 30 & 5.19 & 0 \\
    & \textsf{Most-Violated} & 211 & 29 & 6.36 & 0 \\
    \midrule
    SSCFLP &
    \textsf{All-In} & 109 & 75 & 6.31 & 38 \\
    & \textsf{First-In} & 194 & 7 & 3.90 & 21 \\
    & \textsf{Shuffle-First-In} & 196 & 8 & 1.87 & 18 \\
    & \textsf{Most-Violated} & 187 & 5 & 16.93 & 30 \\
    \bottomrule
  \end{tabular}
  \label{tab:projected-cuts}
\end{table}

\begin{table}[h]
  \centering
  \caption{The number of instances solved to global optimality (``solved''; out
    of $240$ and $222$ instances for AP and SSCFLP,
    respectively), the number of instances for which a
    feasible point with finite but non-zero gap is found (``open gap''), and
    the number of instances with infinite gap (``infinite gap'') for the
    approaches applied to the extended formulation.
    For those instances with finite but non-zero gap, also the average
    gap (``average gap''; in \si{\percent}) is shown.}
  \begin{tabular}{llrrrr}
    \toprule
    problem & approach & solved & open gap & average gap & infinite gap \\
    \midrule
    AP &
    MILP & 240 & 0 & -- & 0 \\
    & BnC & 232 & 8 & 1.17 & 0 \\
    & CCG & 240 & 0 & -- & 0 \\
    \midrule
    SSCFLP &
    MILP & 139 & 9 & 1.17 & 74 \\
    & BnC & 165 & 4 & 0.84 & 53 \\
    & CCG & 195 & 27 & 14.35 & 0 \\
    \bottomrule
  \end{tabular}
  \label{tab:extended}
\end{table}

\begin{table}[h]
  \centering
  \caption{The number of instances solved to global optimality (``solved''; out
    of $240$ and $222$ instances for AP and SSCFLP,
    respectively), the number of instances for which a
    feasible point with finite but non-zero gap is found (``open gap''), and
    the number of instances with infinite gap (``infinite gap'') for the
    approaches applied to the compact formulation.
    For those instances with finite but non-zero gap, also the average
    gap (``average gap''; in \si{\percent}) is shown.}
  \begin{tabular}{llrrrr}
    \toprule
    problem & approach & solved & open gap & average gap & infinite gap \\
    \midrule
    AP & 
    MILP & 58 & 2 & 11.10 & 180 \\
    & BnC & 181 & 32 & 21.62 & 27 \\
    & CCG & 238 & 0 & -- & 2 \\
    \midrule
    SSCFLP &
    MILP & 0 & 0 & -- & 222 \\
    & BnC & 100 & 72 & 10.86 & 50 \\
    & CCG & 0 & 0 & -- & 222 \\
    \bottomrule
  \end{tabular}
  \label{tab:compact}
\end{table}


\end{document}
